\numberwithin{equation}{section}
\newtheorem{thm}{Theorem}[section]
\newtheorem{cor}[thm]{Corollary}
\newtheorem{lem}[thm]{Lemma}
\newtheorem{prop}[thm]{Proposition}
\newtheorem{rem}{Remark}
\newtheorem{ass}{Assumption}
\title{Limit theorem associated with Wishart matrices with application to hypothesis testing for common principal components}
\author{Koji Tsukuda\footnote{Faculty of Mathematics, Kyushu University, 744 Motooka, Nishi-ku, Fukuoka-shi, Fukuoka 819-0395, Japan.} \quad {\it and} \quad Shun Matsuura\footnote{Faculty of Science and Technology, Keio University, 3-14-1 Hiyoshi, Kohoku-ku, Yokohama, Kanagawa 223-8522, Japan.}}
\begin{document}
\maketitle

\allowdisplaybreaks[3]

\begin{abstract}
This study derives a new property of the Wishart distribution when the degree-of-freedom and the size of the matrix parameter of the distribution grow simultaneoulsy.
Particularly, the asymptotic normality of 
the product of four independent Wishart matrices
is shown under 
a high dimensional asymptotic regime.
As an application of the result, a statistical test procedure for the common principal components hypothesis is proposed.
For this problem, the proposed test statistic is asymptotically normal under the null hypothesis.
In addition, the proposed test statistic diverges to positive infinity in probability under the alternative hypothesis.
\end{abstract}

\vspace{5truemm}
\textbf{keywords}: asymptotic test; central limit theorem;  common principal components model; high-dimension;  Wishart distribution.\par
\textbf{MSC2020 subject classifications}: primary 60F05, 62F05; secondary 62H25.

\section{Introduction}\label{sec:1}

This paper shows the asymptotic normality of the trace of products of four independent Wishart matrices in a high-dimensional setting, and proposes a statistical test procedure for the common principal components (CPC) hypothesis which is a typical null hypothesis in the context of multivariate statistical analysis.

A classical setting in multivariate analysis is that population distributions are normal and that the number of observed variables are much less than the number of individuals in a sample.
In 1928, John Wishart derived the celebrated Wishart distribution as the distribution of a scatter matrix $\sum_{i=1}^n \bm{v}_i \bm{v}_i'$ calculated from iid centered $p$-dimensional normal vectors $\{ \bm{v}_i \}_{i=1}^n$, where $'$ denotes the transpose.
Starting from the derivation of the Wishart distribution, a lot of studies have investigated its asymptotic properties under the traditional multivariate analysis setting: $n\to\infty$ with fixed $p$.
On the other hand, as observed variables have increased with the development of information technology, different settings have become possible and so multivariate statistical methods have been developed to deal with this situation.
In particular, when few variables are observed, the likelihood ratio test is quite useful to test hypotheses about population covariance matrices.
However, when more variables are observed than the number of the individuals in samples (so-called \textit{high-dimensional setting}), the likelihood ratio test is unavailable in many cases because the scatter matrices are not full-rank.
In one-sample testing problems such as ``the population covariance matrix is an identity matrix'', ``the covariance population matrix is spherical'', and ``the covariance matrix is diagonal'', alternative test procedures with clever usage of the trace of some functions of a scatter matrix have been proposed.
Such procedures are considered to be effective in high-dimensional settings; see, e.g., Chen et al.~\cite{RefCZZ}, Srivastava~\cite{RefSr}, and Srivastava et al.~\cite{RefSYK}.

In two-sample testing problems for covariance matrices, the hypotheses such as ``two covariance population matrices are identical'', ``two population covariance matrices are proportional'', and ``two population covariance matrices have the same eigenvectors (CPC hypothesis)'' have been considered.
These three hypotheses testing are especially typical in two sample problems in the multivariate analysis.
Indeed, they correspond Flury's hierarchical model, 
and the likelihood ratio test can be used for model selection \cite{RefF88}.
As the likelihood ratio test for these hypotheses testing is unavailable in a high-dimensional setting, alternative test procedures have been proposed for the former two hypotheses (equality and proportionality); see, e.g., Li and Chen~\cite{RefLC}, Liu et al.~\cite{RefLXZT}, Schott~\cite{RefSc}, Srivastava and Yanagihara~\cite{RefSY}, Srivastava et al.~\cite{RefSYK}, Tsukuda and Matsuura~\cite{RefTM} and Xu et al.~\cite{RefXLZB}.
Particularly, some of them adopt test statistics based on the trace of some functions of scatter matrices.
Testing the CPC hypothesis was first considered by Flury~\cite{RefF84}, and several studies including Boente et al.~\cite{RefBPR09}, Boik~\cite{RefB} and Hallin et al.~\cite{RefHPV10,RefHPV13} have proposed test procedures.
However, none of them have considered high-dimensional settings.
Therefore, in this paper, we propose a test procedure for CPC hypothesis in a high-dimensional setting by applying our main result.

This paper is organized as follows.
In Section~\ref{sec:2}, we present the main result with the outline of its proof.
An application of the limit theorem to testing CPC hypothesis is given in Section~\ref{sec:3}.
Section~\ref{sec:4} is devoted to prepare preliminary results which are used in the proof of the main result.
Section~\ref{sec:proof} supplements the technically missing part in the former sections and concludes the proof of the main result.

\section{Limit theorem}\label{sec:2}

\subsection{Problem setting and assumption}

Let $n_a$, $n_b$, $n_c$, $n_d$ and $p$ be positive integers and $\bm{\Sigma}_a$, $\bm{\Sigma}_b$, $\bm{\Sigma}_c$ and $\bm{\Sigma}_d$ positive definite matrices.
Consider four independent Wishart matrices 
\[ 
\bm{T}_{a}(n_a) \sim W_p(n_a,\bm{\Sigma}_a), \ 
\bm{T}_{b}(n_b) \sim W_p(n_b,\bm{\Sigma}_b), \]
\[ 
\bm{T}_{c}(n_c) \sim W_p(n_c,\bm{\Sigma}_c), \ 
\bm{T}_{d}(n_d) \sim W_p(n_d,\bm{\Sigma}_d), \]
where $\bm{T} \sim W_p(n,\bm{\Sigma})$ denotes a random $p\times p$ matrix $\bm{T}$ follows the $p$-dimensional Wishart distribution with its degrees-of-freedom $n$ and its matrix parameter $\bm{\Sigma}$.
We will study the asymptotic behavior of 
\[ M = \frac{1}{r_p}{\rm tr}\left( 
\bm{T}_{a}(n_a) \bm{T}_{b}(n_b) \bm{T}_{c}(n_c) \bm{T}_{d}(n_d) \right) \]
under the following high-dimensional asymptotic regime
\begin{equation}\label{ar1}
n_a,n_b,n_c,n_d \asymp p^{\delta}, \quad 0<\delta < 1, 
\end{equation}
where
\[ r_p =  r_{p,n_a,n_b,n_c,n_d} =  p^2 \sqrt{n_a n_b n_c n_d}. \]
To provide our limit theorem, we pose the following assumption.

\begin{ass}\label{ass1}
As $p\to \infty$ with \eqref{ar1}, it holds that
\begin{eqnarray*}
&& \frac{{\rm tr}(\bm{\Sigma}_i \bm{\Sigma}_j)}{p} \to \sigma_{ij}\in (0,\infty),  \\
&& \frac{{\rm tr}(\bm{\Sigma}_i \bm{\Sigma}_j \bm{\Sigma}_k)}{p} \to \sigma_{ijk}\in (-\infty,\infty), \\
&& \qquad \vdots \\
&& \frac{{\rm tr}(\bm{\Sigma}_i \bm{\Sigma}_j \bm{\Sigma}_k \bm{\Sigma}_l 
\bm{\Sigma}_{i'} \bm{\Sigma}_{j'} \bm{\Sigma}_{k'} \bm{\Sigma}_{l'}
\bm{\Sigma}_{i''} \bm{\Sigma}_{j''} \bm{\Sigma}_{k''} \bm{\Sigma}_{l''}
\bm{\Sigma}_{i'''} \bm{\Sigma}_{j'''} \bm{\Sigma}_{k'''} \bm{\Sigma}_{l'''})}{p} \\
&& \to \sigma_{ijkli'j'k'l'i''j''k''l''i'''j'''k'''l'''}\in (-\infty,\infty)  
\end{eqnarray*}
for $i,j,k,l,i',j',k',l',i'',j'',k'',l'',i''',j''',k''',l'''=a,b,c,d$.
\end{ass}

Obviously, it holds that 
\[ {\rm E}[M]=\frac{{n_an_bn_cn_d}}{r_p}{\rm tr}\left( 
\bm{\Sigma}_{a} \bm{\Sigma}_{b} \bm{\Sigma}_{c} \bm{\Sigma}_{d} \right). \]
Moreover, the following proposition provides the limit of variance of $M$ under our asymptotic regime.

\begin{prop}\label{prop21}
Under Assumption~\ref{ass1}, it holds that
\[ {\rm V}[M] \to \sigma_{ab}\sigma_{ad}\sigma_{bc}\sigma_{cd} \]
as $p\to\infty$ with \eqref{ar1}.
\end{prop}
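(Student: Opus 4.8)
The plan is to compute ${\rm V}[M]=r_p^{-2}\bigl({\rm E}[({\rm tr}(\bm{T}_a\bm{T}_b\bm{T}_c\bm{T}_d))^2]-({\rm E}[{\rm tr}(\bm{T}_a\bm{T}_b\bm{T}_c\bm{T}_d)])^2\bigr)$ by a brute entrywise expansion, and then to classify the resulting terms by order. Writing ${\rm tr}(\bm{T}_a\bm{T}_b\bm{T}_c\bm{T}_d)=\sum_{i_1,i_2,i_3,i_4}(\bm{T}_a)_{i_1i_2}(\bm{T}_b)_{i_2i_3}(\bm{T}_c)_{i_3i_4}(\bm{T}_d)_{i_4i_1}$, squaring, and using that $\bm{T}_a,\bm{T}_b,\bm{T}_c,\bm{T}_d$ are independent together with the Wishart second moment identity ${\rm E}[T_{ij}T_{kl}]=n^2\Sigma_{ij}\Sigma_{kl}+n\Sigma_{ik}\Sigma_{jl}+n\Sigma_{il}\Sigma_{jk}$ for $\bm{T}\sim W_p(n,\bm{\Sigma})$, one obtains ${\rm E}[({\rm tr}(\bm{T}_a\bm{T}_b\bm{T}_c\bm{T}_d))^2]$ as a sum of $3^4=81$ terms: each letter $\bullet\in\{a,b,c,d\}$ chooses either the ``separating'' contribution (carrying a factor $n_\bullet^2$) or one of two ``linking'' contributions (each carrying a factor $n_\bullet$), and after summing over $i_1,\dots,i_8$ the term equals $\bigl(\prod_\bullet n_\bullet^{e_\bullet}\bigr)$ times a product of traces of products of $\bm{\Sigma}_a,\bm{\Sigma}_b,\bm{\Sigma}_c,\bm{\Sigma}_d$, with $e_\bullet\in\{1,2\}$, the product of traces being determined by the cycle decomposition of the resulting index contraction. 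By Assumption~\ref{ass1} each such trace (a trace of a product of at most eight of the $\bm{\Sigma}$'s) is $O(p)$, so if the term produces $t$ cycles and has $s:=\#\{\bullet:e_\bullet=2\}$ separating factors, then after division by $r_p^2=p^4n_an_bn_cn_d$ it contributes $O\bigl(p^{\,t-4}\prod_\bullet n_\bullet^{e_\bullet-1}\bigr)=O\bigl(p^{\,t-4+\delta s}\bigr)$ to ${\rm V}[M]$ by \eqref{ar1}.

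Next I would isolate the cancellations and the surviving term. There is exactly one term with $s=4$ (all letters separating); it equals $n_a^2n_b^2n_c^2n_d^2({\rm tr}(\bm{\Sigma}_a\bm{\Sigma}_b\bm{\Sigma}_c\bm{\Sigma}_d))^2=({\rm E}[{\rm tr}(\bm{T}_a\bm{T}_b\bm{T}_c\bm{T}_d)])^2$, so it cancels against the subtracted square of the mean. For the remaining $80$ terms the key combinatorial fact is the bound $t\le 4-s$ for every $s\in\{0,1,2,3\}$, together with the statement that for $s=0$ equality $t=4$ holds for exactly one of the $2^4$ choices of linking types, namely the ``parallel'' one, whose four length-$2$ cycles are ${\rm tr}(\bm{\Sigma}_a\bm{\Sigma}_b),{\rm tr}(\bm{\Sigma}_b\bm{\Sigma}_c),{\rm tr}(\bm{\Sigma}_c\bm{\Sigma}_d),{\rm tr}(\bm{\Sigma}_d\bm{\Sigma}_a)$. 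Granting these, every term with $1\le s\le 3$ contributes $O(p^{\,(4-s)-4+\delta s})=O(p^{-s(1-\delta)})=o(1)$ because $\delta<1$; among the $s=0$ terms, all but the parallel one contribute $O(p^{3-4})=o(1)$, while the parallel one equals $n_an_bn_cn_d\,{\rm tr}(\bm{\Sigma}_a\bm{\Sigma}_b){\rm tr}(\bm{\Sigma}_b\bm{\Sigma}_c){\rm tr}(\bm{\Sigma}_c\bm{\Sigma}_d){\rm tr}(\bm{\Sigma}_d\bm{\Sigma}_a)$, whence dividing by $r_p^2$ and applying Assumption~\ref{ass1} yields the limit $\sigma_{ab}\sigma_{bc}\sigma_{cd}\sigma_{ad}=\sigma_{ab}\sigma_{ad}\sigma_{bc}\sigma_{cd}$. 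Summing the $81$ contributions gives ${\rm V}[M]\to\sigma_{ab}\sigma_{ad}\sigma_{bc}\sigma_{cd}$.

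The combinatorial bound is where I expect the real work to be. A length-$2$ cycle forces its two $\bm{\Sigma}$-factors to share both indices, and a short inspection of the eight index labels shows that an index lying next to a separating factor can never belong to a length-$2$ cycle; carrying this through the cases $s=1,2,3$ (using the cyclic symmetry among $a,b,c,d$) shows that the two copies of each separating factor lie in cycles long enough to push the total cycle count down by at least one per separating factor, giving $t\le 4-s$, while the $s=0$ case is the direct verification that ``all four cycles have length $2$'' forces the parallel pairing. The principal obstacle is precisely this bookkeeping over the $3^4$ terms --- in particular checking that no term diverges (which the bound $t\le 4-s$ guarantees, since then $t-4+\delta s\le -s(1-\delta)\le 0$, the $s=4$ term being removed by the cancellation) and that no spurious limit such as $\sigma_{ab}^2\sigma_{cd}^2$ or $\sigma_{ac}^2\sigma_{bd}^2$ is produced --- and it is organized cleanly by recording, for each of the $3^4$ choices, the induced pairing of the eight index labels and reading off its cycle structure, a small instance of the genus expansion for traces of products of Wishart matrices.
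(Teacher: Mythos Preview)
Your proposal is correct and leads to the same result as the paper, but the organization differs enough to be worth a comparison. The paper computes ${\rm E}[({\rm tr}(\bm{T}_a\bm{T}_b\bm{T}_c\bm{T}_d))^2]$ by iterated conditioning: it takes the expectation over $\bm{T}_a$ first using the matrix identity ${\rm E}[{\rm tr}(\bm{T}\bm{A})\bm{T}]=n\bm{\Sigma}\bm{A}\bm{\Sigma}+n\bm{\Sigma}\bm{A}'\bm{\Sigma}+n^2{\rm tr}(\bm{\Sigma}\bm{A})\bm{\Sigma}$, then over $\bm{T}_b$, $\bm{T}_c$, $\bm{T}_d$ in turn, producing an explicit closed-form expression for the variance (about two dozen trace terms with their exact polynomial coefficients in $n_a,n_b,n_c,n_d$), from which the limit is read off directly under Assumption~\ref{ass1} and \eqref{ar1}. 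Your approach is the entrywise version of the same expansion: the three choices per letter are exactly the three summands of that matrix identity, so your $3^4$ terms are in bijection with the monomials the paper assembles. What you gain is a clean order-counting principle (the cycle bound $t\le 4-s$) that lets you dismiss $80$ of the $81$ contributions without ever writing them down; what the paper gains is an exact variance formula valid for all $p,n_a,n_b,n_c,n_d$, not just the asymptotic statement. Your sketch of the bound $t\le 4-s$ is on the right track---the key observation that a separating edge $(i_1,i_2)$ can never sit in a $2$-cycle because its neighbouring $\bm{\Sigma}_b$- and $\bm{\Sigma}_d$-edges go to disjoint index sets, and in fact cannot sit in a $3$-cycle either, forces each separating letter to cost at least one cycle---but you should expect to spend most of the write-up making that case analysis airtight, whereas the paper sidesteps it entirely by brute force.
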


\textit{Proof.}
See Section~\ref{sec:proof}.
\qed

To close this subsection, let us define four independent iid $p$-dimensional random sequences $\{\bm{x}_i\}_{i=1}^{n_a}$, $\{ \bm{y}_i \}_{i=1}^{n_b}$, $\{ \bm{z}_i\}_{i=1}^{n_c}$ and $\{ \bm{w}_i \}_{i=1}^{n_d}$ which satisfy
\[ 
\bm{T}_{a}(n_a) = \sum_{i=1}^{n_a}\bm{x}_i\bm{x}_i' , \ 
\bm{T}_{b}(n_b) = \sum_{i=1}^{n_b}\bm{y}_i\bm{y}_i' , \   
\bm{T}_{c}(n_c) = \sum_{i=1}^{n_c}\bm{z}_i\bm{z}_i' , \ 
\bm{T}_{d}(n_d) = \sum_{i=1}^{n_d}\bm{w}_i\bm{w}_i' 
\]
where
\[ \bm{x}_i\sim N_p(\bm{0}_p,\bm{\Sigma}_a) \ (i=1,\dots,n_a), \ 
\bm{y}_i\sim N_p(\bm{0}_p,\bm{\Sigma}_b) \ (i=1,\dots,n_b), \]
\[ \bm{z}_i\sim N_p(\bm{0}_p,\bm{\Sigma}_c) \ (i=1,\dots,n_c), \ 
\bm{w}_i\sim N_p(\bm{0}_p,\bm{\Sigma}_d) \ (i=1,\dots,n_d). \]
Moreover, for later discussions, let us denote
\[ \bm{T}_{a}(h) = \sum_{i=1}^{h}\bm{x}_i\bm{x}_i' \ (h=1,\dots,n_a), \ 
\bm{T}_{b}(h) = \sum_{i=1}^{h}\bm{y}_i\bm{y}_i' \ (h=1,\dots,n_b), \]
\[ \bm{T}_{c}(h) = \sum_{i=1}^{h}\bm{z}_i\bm{z}_i' \ (h=1,\dots,n_c), \ 
\bm{T}_{d}(h) = \sum_{i=1}^{h}\bm{w}_i\bm{w}_i' \ (h=1,\dots,n_d). \]

\subsection{Main result}

The main result of this paper is the following theorem.

\begin{thm}\label{mthm}
Under Assumption~\ref{ass1}, it holds that
\[ M - {\rm E}[M] \Rightarrow N(0,  \sigma_{ab}\sigma_{ad}\sigma_{bc}\sigma_{cd})\]
as $p\to\infty$ with \eqref{ar1}, where $\Rightarrow$ denotes the convergence in distribution.
\end{thm}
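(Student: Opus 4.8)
The plan is to establish the central limit theorem via a martingale decomposition together with the method of moments on a suitably truncated quantity. First I would write $M$ in terms of the underlying Gaussian vectors $\{\bm{x}_i\},\{\bm{y}_i\},\{\bm{z}_i\},\{\bm{w}_i\}$ as
\[
M = \frac{1}{r_p}\sum_{i=1}^{n_a}\sum_{j=1}^{n_b}\sum_{k=1}^{n_c}\sum_{l=1}^{n_d} (\bm{x}_i'\bm{y}_j)(\bm{y}_j'\bm{z}_k)(\bm{z}_k'\bm{w}_l)(\bm{w}_l'\bm{x}_i),
\]
so that $M-{\rm E}[M]$ becomes a sum over the four index ranges of centered terms. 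The natural device is to introduce a filtration by revealing the vectors one at a time in a fixed order — say all of $\{\bm{x}_i\}$, then $\{\bm{y}_j\}$, then $\{\bm{z}_k\}$, then $\{\bm{w}_l\}$ — and to write $M-{\rm E}[M]=\sum_m D_m$, where $D_m$ is the increment $({\rm E}[M\mid \mathcal{F}_m]-{\rm E}[M\mid\mathcal{F}_{m-1}])$ with $\mathcal{F}_m$ generated by the first $m$ vectors in that order. This makes $\{D_m\}$ a martingale difference array, and one then applies the martingale central limit theorem (e.g. the Hall–Heyde criterion): it suffices to verify (i) the conditional variances $\sum_m {\rm E}[D_m^2\mid\mathcal{F}_{m-1}]$ converge in probability to $\sigma_{ab}\sigma_{ad}\sigma_{bc}\sigma_{cd}$, and (ii) a conditional Lindeberg condition, for which proving $\sum_m {\rm E}[D_m^4]\to 0$ is enough.

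The second step is the bookkeeping that reduces (i) to Proposition~\ref{prop21}. The sum of the unconditional variances ${\rm E}[D_m^2]$ equals ${\rm V}[M]$, which already converges to the stated limit by Proposition~\ref{prop21}; the work is to show the conditional version concentrates around it, i.e. that $\sum_m\big({\rm E}[D_m^2\mid\mathcal{F}_{m-1}]-{\rm E}[D_m^2]\big)\to 0$ in probability. This is handled by bounding the variance of that sum, which expands into traces of products of the $\bm{\Sigma}$'s of the type controlled in Assumption~\ref{ass1}; under the regime $n_a,n_b,n_c,n_d\asymp p^\delta$ with $0<\delta<1$, the normalization $r_p = p^2\sqrt{n_an_bn_cn_d}$ is exactly calibrated so that the cross terms and higher-order fluctuation terms are of lower order. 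The fourth-moment bound in (ii) is similar in spirit: $D_m$ is (conditionally) a quadratic-type form in the $m$-th Gaussian vector, so Gaussian hypercontractivity lets one replace ${\rm E}[D_m^4]$ by $(\,{\rm E}[D_m^2]\,)^2$ up to constants, and $\sum_m ({\rm E}[D_m^2])^2\le \max_m {\rm E}[D_m^2]\cdot\sum_m {\rm E}[D_m^2]\to 0$ because each individual increment is negligible (each of order $n^{-1}$ relative to the total, and there are $\asymp n$ of them in each block).

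The main obstacle, and where I expect the bulk of the technical effort to lie, is the explicit computation and estimation of ${\rm E}[D_m^2\mid\mathcal{F}_{m-1}]$ and its fluctuation. Each $D_m$ is a sum over the remaining (not-yet-revealed) indices of inner-product chains, and squaring produces sums over pairs of such chains; taking the conditional expectation with respect to the $m$-th vector, and then bounding the variance over the earlier vectors, generates a large catalogue of trace expressions $\mathrm{tr}(\bm{\Sigma}_{i_1}\cdots\bm{\Sigma}_{i_s})/p$ that must each be shown to contribute at the right order — this is precisely why Assumption~\ref{ass1} is stated all the way up to products of sixteen matrices. Organizing this combinatorial expansion cleanly, and tracking the powers of $p$ and of the $n$'s for each term, is the crux; I would lean on the preliminary estimates promised for Section~\ref{sec:4} to package the recurring sub-computations, and defer the most delicate order-counting to Section~\ref{sec:proof} as the paper's outline indicates.
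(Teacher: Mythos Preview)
Your approach is the paper's: the identical filtration (reveal $\{\bm x_i\}$, then $\{\bm y_j\}$, then $\{\bm z_k\}$, then $\{\bm w_l\}$), the martingale decomposition $M-{\rm E}[M]=\sum_h D_h$, and the same two verifications---concentration of $\sum_h\sigma_h^2$ via a variance bound, and the Lyapunov condition $\sum_h{\rm E}[D_h^4]\to 0$. The paper packages these as Lemmas~\ref{lem23} and~\ref{lem24} and checks them by explicit trace computations using the moment formulas of Section~\ref{sec:4}.

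One caution on your shortcut for the Lyapunov condition. Saying $D_m$ is ``(conditionally) a quadratic-type form in the $m$-th Gaussian vector'' and then concluding ${\rm E}[D_m^4]\lesssim({\rm E}[D_m^2])^2$ is a non sequitur: conditional hypercontractivity only gives ${\rm E}[D_m^4\mid\mathcal F_{m-1}]\le C\,\sigma_m^4$, hence ${\rm E}[D_m^4]\le C\,{\rm E}[\sigma_m^4]$, and Jensen points the wrong way from there. The inequality you want \emph{is} true, but you must invoke hypercontractivity unconditionally, treating $D_m$ as a bounded-degree polynomial in \emph{all} the underlying Gaussians (degree at most $8$: two each from $\bm T_a,\bm T_b,\bm T_c$ and from the centered rank-one term). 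With that correction your estimate $\sum_m({\rm E}[D_m^2])^2\le \max_m{\rm E}[D_m^2]\cdot{\rm V}[M]\to 0$ does go through, since within each of the four blocks the values ${\rm E}[D_m^2]$ coincide, each block's total is $O(1)$, and the block sizes $\asymp p^\delta$ diverge. The paper instead computes ${\rm E}[D_h^4]$ directly from the fourth-moment formula for Gaussian quadratic forms (Lemma~\ref{lem43}) together with the Wishart moment bounds of Section~\ref{sec:4}; your hypercontractivity route is a legitimate and somewhat cleaner alternative once stated correctly.
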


\begin{proof}
Define a sequence $\{\bm{u}_i\}_{i=1}^{n_a+n_b+n_c+n_d}$ by
\[ \bm{u}_i=\bm{x}_i \ (i=1,\dots,n_a), \ 
\bm{u}_{n_a+i}=\bm{y}_i \ (i=1,\dots,n_b), \]
\[ \bm{u}_{n_a+n_b+i}=\bm{z}_i \ (i=1,\dots,n_c), \ 
\bm{u}_{n_a+n_b+n_c+i}=\bm{w}_i \ (i=1,\dots,n_d). \]
Moreover, introduce a filtration $\{\mathcal{F}_h\}_{h=1}^{n_a+n_b+n_c+n_d}$ defined by $\mathcal{F}_h=\sigma (\bm{u}_1,\dots,\bm{u}_h)$ $(h=1,\dots,n_a+n_b+n_c+n_d)$.
Consider a martingale difference array $\{ D_h \}_{h=1}^{n_a+n_b+n_c+n_d}$ defined by
\[ D_h = {\rm E}_h[M] - {\rm E}_{h-1}[M] \quad (h=1,\dots,n_a+n_b+n_c+n_d) , \]
where we use the notation ${\rm E}_0[\cdot] = {\rm E}[\cdot]$ and ${\rm E}_h[\cdot] = {\rm E}[\cdot| \mathcal{F}_h]$ $(h=1,\ldots,n_a{}+\cdots+n_{d})$ for simplicity.
From the definition, it holds that
\begin{eqnarray*}
 \sum_{h=1}^{n_a+n_b+n_c+n_d}D_h 
&=& M - {\rm E}[M]  \\
&=& 
\frac{1}{r_p}{\rm tr}\left( 
\bm{T}_{a}(n_a) \bm{T}_{b}(n_b) \bm{T}_{c}(n_c) \bm{T}_{d}(n_d) \right) 
-\frac{{n_an_bn_cn_d}}{r_p}{\rm tr}\left( 
\bm{\Sigma}_{a} \bm{\Sigma}_{b} \bm{\Sigma}_{c} \bm{\Sigma}_{d} \right). 
\end{eqnarray*}
Its quadratic variation is denoted by $\{ \sigma_h^2 \}_{h=1}^{n_a+n_b+n_c+n_d}$; i.e., 
\[ \sigma_{h}^{2} = {\rm E}_{h-1}[D_{h}^{2}] \quad (h=1,\dots,n_a+n_b+n_c+n_d) . \]
It holds that
\[ \sigma_{h}^{2} = {\rm E}_{h-1}[({\rm E}_h[M])^2]-({\rm E}_{h-1}[M])^2 \ (h=1,\dots,n_a+n_b+n_c+n_d) \]
and so
\begin{eqnarray}
{\rm E} \left[ \sum_{h=1}^{n_a+n_b+n_c+n_d}\sigma_{h}^{2} \right] 
&=& {\rm E} \left[ \sum_{h=1}^{n_a+n_b+n_c+n_d} \left\{ {\rm E}_{h-1}[({\rm E}_h[M])^2]-({\rm E}_{h-1}[M])^2 \right\} \right] \nonumber \\
&=& \sum_{h=1}^{n_a+n_b+n_c+n_d}\left\{ {\rm E}[({\rm E}_h[M])^2]-{\rm E}[({\rm E}_{h-1}[M])^2] \right\} \nonumber \\
&=& {\rm E}[({\rm E}_{n_a+n_b+n_c+n_d}[M])^2]-{\rm E}[({\rm E}_{0}[M])^2] \nonumber \\
&=& {\rm E}[M^2]-({\rm E}[M])^2={\rm V}[M] . \label{vme}
\end{eqnarray}

As we will see in Section~\ref{sec:proof}, the following two lemmas hold:

\begin{lem}\label{lem23}
Under Assumption~\ref{ass1}, it holds that
\[ {\rm V}\left[ \sum_{h=1}^{n_a+n_b+n_c+n_d}\sigma_{h}^{2} \right] \to 0 \]
as $p\to\infty$ with \eqref{ar1}.
\end{lem}
\begin{lem}\label{lem24}
Under Assumption~\ref{ass1}, it holds that
\[ \sum_{h=1}^{n_a+n_b+n_c+n_d} {\rm E}[D_{h}^{4}] \to 0 \]
as $p\to\infty$ with \eqref{ar1}.
\end{lem}

Proposition~\ref{prop21}, \eqref{vme} and these lemmas yield that
\[ \sum_{h=1}^{n_a+n_b+n_c+n_d}\sigma_{h}^{2} \to^p \sigma_{ab} \sigma_{ad} \sigma_{bc} \sigma_{cd} \]
and
\begin{equation}
 \frac{\sum_{h=1}^{n_a+n_b+n_c+n_d} {\rm E} [D_{h}^{4}]}{({\rm V}[M])^2} \to 0.  \label{lyap}
\end{equation}
Therefore, the conclusion follows from the martingale central limit theorem.
Indeed, the Lyapunov condition follows from \eqref{lyap}.
This completes the proof.
\end{proof}

\section{Testing for common principal components model}\label{sec:3}

\subsection{Problem setting}

The Common Principal Components (CPC) model is the model that the eigenvectors of the covariance matrices of (more than) two populations are identical.
The CPC model was first introduced in \cite{RefF84}, and fundamental asymptotic theory of statistical inference was established in \cite{RefF86}.
Flury~\cite{RefF84,RefF86} considered that the population distributions are normal.
Afterwards, the CPC model was discussed in Hallin et al.~\cite{RefHPV08} for other elliptical and possibly heterokurtic distributions than the normal distribution.
As introduced in Section~\ref{sec:1}, tests for CPC model have been studied in Boente et al.~\cite{RefBPR09}, Boik~\cite{RefB} and Hallin et al.~\cite{RefHPV10,RefHPV13}, but CPC test under the high-dimensional setting has not been studied in the literature.
In this section, we consider the problem by using Theorem~\ref{mthm}.

Denote spectral decompositions of two population covariance matrices $\bm{\Sigma}_x$ and $\bm{\Sigma}_y$ be 
$\bm{\Sigma}_x=\bm{U}_x\bm{\Lambda}_x\bm{U}_x'$ and  $\ \bm{\Sigma}_y=\bm{U}_y\bm{\Lambda}_y\bm{U}_y'$, respectively.
The CPC model means that there exist spectral decompositions satisfying $\bm{U}_x=\bm{U}_y$.
It is well-known that this model is equivalently expressed as
$\bm{\Sigma}_x \bm{\Sigma}_y  = \bm{\Sigma}_y \bm{\Sigma}_x$.

Henceforth, let $p, m, n$ be positive integers, and let $\bm{\Sigma}_x$ and $\bm{\Sigma}_y$ be $p\times p$ positive-definite matrices.
Suppose that we have a random sample of size $M=4m$ from $N_p(\bm{\mu}_x,\bm{\Sigma}_x)$, and the sample is randomly split  to four subsamples of size $m$.
In the same way, suppose that  we have a random sample of size $N=4n$ from $N_p(\bm{\mu}_y,\bm{\Sigma}_y)$, and the sample is split to four subsamples of size $n$.
Under this setting, we wish to test
\begin{eqnarray*}
 &\mathcal{H}_0 {\rm (Null)}&: \ \bm{\Sigma}_x \bm{\Sigma}_y  = \bm{\Sigma}_y \bm{\Sigma}_x, \\
 &\mathcal{H}_1 {\rm (Alternative)}&: \ \bm{\Sigma}_x \bm{\Sigma}_y  \neq \bm{\Sigma}_y \bm{\Sigma}_x .
\end{eqnarray*}
We consider the asymptotic regime $p\to\infty$ with
\begin{equation}\label{ar2}
m,n \asymp p^{\delta}, \quad 0<\delta < 1.
\end{equation}
When the power of the test is discussed, the regime is limited to $1/2 < \delta <1$ in order to guarantee the consistency.
The following assumption is posed on the covariane matrices.

\begin{ass}\label{ass2}
As $p\to\infty$ with \eqref{ar2}, it holds that
\begin{eqnarray*}
&& \frac{{\rm tr}(\bm{\Sigma}_i \bm{\Sigma}_j)}{p} \to \sigma_{ij}\in (0,\infty), \\
&& \qquad \vdots \\
&& \frac{{\rm tr}(\bm{\Sigma}_i \bm{\Sigma}_j \bm{\Sigma}_k \bm{\Sigma}_l \bm{\Sigma}_{i'})}{p} \to \sigma_{ijkli'}\in (0,\infty), \\
&& \frac{{\rm tr}(\bm{\Sigma}_i \bm{\Sigma}_j \bm{\Sigma}_k \bm{\Sigma}_l 
\bm{\Sigma}_{i'} \bm{\Sigma}_{j'} )}{p} \to \sigma_{ijkli'j'}\in (-\infty,\infty), \\
&& \qquad \vdots \\
&& \frac{{\rm tr}(\bm{\Sigma}_i \bm{\Sigma}_j \bm{\Sigma}_k \bm{\Sigma}_l 
\bm{\Sigma}_{i'} \bm{\Sigma}_{j'} \bm{\Sigma}_{k'} \bm{\Sigma}_{l'}
\bm{\Sigma}_{i''} \bm{\Sigma}_{j''} \bm{\Sigma}_{k''} \bm{\Sigma}_{l''}
\bm{\Sigma}_{i'''} \bm{\Sigma}_{j'''} \bm{\Sigma}_{k'''} \bm{\Sigma}_{l'''})}{p} \\
&&\to \sigma_{ijkli'j'k'l'i''j''k''l''i'''j'''k'''l'''}\in (-\infty,\infty)  
\end{eqnarray*}
for $i,j,k,l,i',j',k',l',i'',j'',k'',l'',i''',j''',k''',l'''=x,y$.
\end{ass}

\subsection{Test procedure}

We can equivalently transform $\mathcal{H}_0$ as follows:
\begin{eqnarray*}
&& \mathcal{H}_0: \ \bm{\Sigma}_x \bm{\Sigma}_y  = \bm{\Sigma}_y \bm{\Sigma}_x \\
& \Longleftrightarrow & \mathcal{H}_0: \ {\rm tr}\left\{ 
\left( \bm{\Sigma}_x \bm{\Sigma}_y - \bm{\Sigma}_y \bm{\Sigma}_x\right)
\left( \bm{\Sigma}_x \bm{\Sigma}_y - \bm{\Sigma}_y \bm{\Sigma}_x\right)' \right\} = 0 \\
& \Longleftrightarrow & \mathcal{H}_0: \ {\rm tr}\left\{ 
\left( \bm{\Sigma}_x \bm{\Sigma}_y - \bm{\Sigma}_y \bm{\Sigma}_x\right)
\left( \bm{\Sigma}_y \bm{\Sigma}_x - \bm{\Sigma}_x \bm{\Sigma}_y\right) \right\} = 0 \\
& \Longleftrightarrow & \mathcal{H}_0: \ {\rm tr}\left( 
\bm{\Sigma}_x \bm{\Sigma}_x \bm{\Sigma}_y \bm{\Sigma}_y \right) 
- {\rm tr}\left( 
\bm{\Sigma}_x \bm{\Sigma}_y \bm{\Sigma}_x \bm{\Sigma}_y \right) = 0.
\end{eqnarray*}
Hence, $\mathcal{H}_0$ can be equivalently transformed into
$ \mathcal{H}_0: \theta =  0$,
where
\[
\theta = \theta_p = \sigma_{xxyy}(p) - \sigma_{xyxy}(p), \]
\[
 \sigma_{xxyy}(p)=\frac{{\rm tr}\left( 
\bm{\Sigma}_x \bm{\Sigma}_x \bm{\Sigma}_y \bm{\Sigma}_y \right)}{p} , \
\sigma_{xyxy}(p)=\frac{{\rm tr}\left( 
\bm{\Sigma}_x \bm{\Sigma}_y \bm{\Sigma}_x \bm{\Sigma}_y \right)}{p}. \]
Moreover, $\mathcal{H}_1$ can be equivalently transformed into
$ \mathcal{H}_1: \theta > 0$. 

Let us denote the scatter matrices calculated from split subsamples by
\[ \bm{T}_{x1},\bm{T}_{x2},\bm{T}_{x3},\bm{T}_{x4},\bm{T}_{y1},\bm{T}_{y2},\bm{T}_{y3},\bm{T}_{y4}. \]
In this case, it holds that
\[\bm{T}_{xk} \sim W_p(m-1,\bm{\Sigma}_x) \ (k=1,2,3,4) \]
and
\[\bm{T}_{yk} \sim  W_p(n-1,\bm{\Sigma}_y) \ (k=1,2,3,4) . \]
Clearly,
\[ \hat{\theta}
= \frac{1}{(m-1)^2(n-1)^2p} \left\{ 
{\rm tr}\left( 
\bm{T}_{x1} \bm{T}_{x2} \bm{T}_{y1} \bm{T}_{y2} \right) 
- {\rm tr}\left( 
\bm{T}_{x3} \bm{T}_{y3} \bm{T}_{x4} \bm{T}_{y4} \right) \right\} \]
is an unbiased estimator of $\theta=\sigma_{xxyy}(p) - \sigma_{xyxy}(p)$.
As for the variance of $\frac{(m-1)(n-1)}{p}\hat{\theta}$, 
it follows from Proposition 2.1 that
\begin{eqnarray*}
\lefteqn{ {\rm V} \left[ \frac{(m-1)(n-1)}{p}\hat{\theta} \right] } \\
&=& {\rm V} \left[ \frac{1}{(m-1)(n-1)p^2} \left\{ 
{\rm tr}\left( 
\bm{T}_{x1} \bm{T}_{x2} \bm{T}_{y1} \bm{T}_{y2} \right) 
- {\rm tr}\left( 
\bm{T}_{x3} \bm{T}_{y3} \bm{T}_{x4} \bm{T}_{y4} \right) \right\} \right] \\
&=& {\rm V} \left[ \frac{1}{(m-1)(n-1)p^2} 
{\rm tr}\left( 
\bm{T}_{x1} \bm{T}_{x2} \bm{T}_{y1} \bm{T}_{y2} \right) \right] 
\\&&
+ {\rm V} \left[ \frac{1}{(m-1)(n-1)p^2} 
{\rm tr}\left( 
\bm{T}_{x3} \bm{T}_{y3} \bm{T}_{x4} \bm{T}_{y4} \right) \right] \\
& \to & \sigma_{xx}\sigma_{yy}\sigma_{xy}^{2} + \sigma_{xy}^{4}.
\end{eqnarray*}

The following proposition establishes the asymptotic behavior of $\hat\theta$ under our asymptotic regime.

\begin{prop}\label{prop31}
Under Assumption~\ref{ass2}, it holds that
\[ 
\frac{(m-1)(n-1)}{p} \left( \hat{\theta} - \theta \right) 
\Rightarrow 
N\left(0 , \sigma_{xx}\sigma_{yy}\sigma_{xy}^{2} + \sigma_{xy}^{4} \right) 
\]
as $p\to\infty$ with \eqref{ar2}.
\end{prop}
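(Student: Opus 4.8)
The plan is to decompose $\widehat\theta$ into two pieces, each of which is (a scaled version of) a trace of a product of four independent Wishart matrices, and apply Theorem~\ref{mthm} to each piece. Concretely, write
\[
\frac{(m-1)(n-1)}{p}\widehat\theta = A_1 - A_2,
\]
where
\[
A_1 = \frac{1}{(m-1)(n-1)p^2}{\rm tr}\left(\bm{T}_{x1}\bm{T}_{x2}\bm{T}_{y1}\bm{T}_{y2}\right),
\qquad
A_2 = \frac{1}{(m-1)(n-1)p^2}{\rm tr}\left(\bm{T}_{x3}\bm{T}_{y3}\bm{T}_{x4}\bm{T}_{y4}\right).
\]
Here $A_1$ has the form $M$ of Section~\ref{sec:2} with $(n_a,n_b,n_c,n_d)=(m-1,m-1,n-1,n-1)$ and matrix parameters $(\bm{\Sigma}_a,\bm{\Sigma}_b,\bm{\Sigma}_c,\bm{\Sigma}_d)=(\bm{\Sigma}_x,\bm{\Sigma}_x,\bm{\Sigma}_y,\bm{\Sigma}_y)$, so that $r_p=p^2(m-1)(n-1)$; similarly $A_2$ has the form $M$ with parameters $(\bm{\Sigma}_x,\bm{\Sigma}_y,\bm{\Sigma}_x,\bm{\Sigma}_y)$. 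One checks that Assumption~\ref{ass2} supplies exactly the moment convergences required by Assumption~\ref{ass1} for both configurations (and that the regime \eqref{ar2} on $m,n$ implies \eqref{ar1} on $m-1,n-1$), so Theorem~\ref{mthm} gives
\[
A_1 - {\rm E}[A_1] \Rightarrow N\!\left(0,\sigma_{xx}\sigma_{xy}\sigma_{xy}\sigma_{yy}\right)
= N\!\left(0,\sigma_{xx}\sigma_{yy}\sigma_{xy}^2\right),
\qquad
A_2 - {\rm E}[A_2] \Rightarrow N\!\left(0,\sigma_{xy}\sigma_{xy}\sigma_{xy}\sigma_{xy}\right) = N\!\left(0,\sigma_{xy}^4\right),
\]
after matching the subscripts in the limiting variance $\sigma_{ab}\sigma_{ad}\sigma_{bc}\sigma_{cd}$ of Theorem~\ref{mthm} to each configuration.

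The next step is to combine these two marginal CLTs into a joint statement. Since the subsamples are obtained by a random split of the original samples, the eight scatter matrices $\bm{T}_{x1},\dots,\bm{T}_{x4},\bm{T}_{y1},\dots,\bm{T}_{y4}$ are mutually independent; in particular the family $(\bm{T}_{x1},\bm{T}_{x2},\bm{T}_{y1},\bm{T}_{y2})$ driving $A_1$ is independent of the family $(\bm{T}_{x3},\bm{T}_{y3},\bm{T}_{x4},\bm{T}_{y4})$ driving $A_2$. Hence $A_1$ and $A_2$ are independent, and the joint convergence
\[
\left(A_1 - {\rm E}[A_1],\ A_2 - {\rm E}[A_2]\right) \Rightarrow N\!\left(\bm{0}_2,\ {\rm diag}\!\left(\sigma_{xx}\sigma_{yy}\sigma_{xy}^2,\ \sigma_{xy}^4\right)\right)
\]
follows from the two marginal limits. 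Applying the continuous map $(u,v)\mapsto u-v$ and noting ${\rm E}[A_1]-{\rm E}[A_2] = \frac{(m-1)(n-1)}{p}\,{\rm E}[\widehat\theta] = \frac{(m-1)(n-1)}{p}\theta$ (by unbiasedness of $\widehat\theta$), we obtain
\[
\frac{(m-1)(n-1)}{p}\left(\widehat\theta - \theta\right) = (A_1 - {\rm E}[A_1]) - (A_2 - {\rm E}[A_2]) \Rightarrow N\!\left(0,\ \sigma_{xx}\sigma_{yy}\sigma_{xy}^2 + \sigma_{xy}^4\right),
\]
which is the claim. The variance computation preceding the proposition already records the same limit $\sigma_{xx}\sigma_{yy}\sigma_{xy}^2+\sigma_{xy}^4$, providing a consistency check.

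The only genuine subtlety—hence the ``main obstacle''—is purely bookkeeping: verifying that Assumption~\ref{ass2} is strong enough to entail Assumption~\ref{ass1} for each of the two $4$-tuples of covariance matrices that arise. Assumption~\ref{ass1} demands convergence of normalized traces of all products of up to sixteen of the matrices $\bm{\Sigma}_a,\bm{\Sigma}_b,\bm{\Sigma}_c,\bm{\Sigma}_d$; when these are drawn (with repetition) from $\{\bm{\Sigma}_x,\bm{\Sigma}_y\}$, every such trace is a normalized trace of a product of up to sixteen matrices from $\{\bm{\Sigma}_x,\bm{\Sigma}_y\}$, and Assumption~\ref{ass2} asserts convergence of exactly these. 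One should also note the harmless index shift $m\mapsto m-1$, $n\mapsto n-1$, which does not affect the asymptotic regime \eqref{ar1}, and that degrees-of-freedom $m-1,n-1$ (rather than $m,n$) enter because the scatter matrices are centered at the sample means. Everything else is an immediate application of Theorem~\ref{mthm}, independence of the split subsamples, and the continuous mapping theorem.
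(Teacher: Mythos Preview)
Your proof is correct and follows essentially the same route as the paper: decompose into the two independent trace statistics, apply Theorem~\ref{mthm} to each (with the appropriate identification of $(\bm{\Sigma}_a,\bm{\Sigma}_b,\bm{\Sigma}_c,\bm{\Sigma}_d)$), and combine via independence. Your write-up is in fact slightly more careful than the paper's, which invokes ``the Slutsky theorem'' where you (more accurately) use independence plus the continuous mapping theorem, and you make explicit the verification that Assumption~\ref{ass2} supplies the trace limits required by Assumption~\ref{ass1}.
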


\begin{proof}
The left-hand side equals
\begin{eqnarray}
&& \frac{1}{(m-1) (n-1) p^2}
{\rm tr}\left( 
\bm{T}_{x1} \bm{T}_{x2} \bm{T}_{y1} \bm{T}_{y2} \right) 
- \frac{(m-1)(n-1)}{p}\sigma_{xxyy}(p) \nonumber \\
&& - \left\{ 
\frac{1}{(m-1) (n-1) p^2} {\rm tr}\left( 
\bm{T}_{x3} \bm{T}_{y3} \bm{T}_{x4} \bm{T}_{y4} \right) 
- \frac{(m-1)(n-1)}{p}\sigma_{xyxy}(p)  \right\} .  \label{pt31}
\end{eqnarray}
It follows from Theorem~\ref{mthm} that
\[
\frac{1}{(m-1) (n-1) p^2} {\rm tr}\left( \bm{T}_{x1} \bm{T}_{x2} \bm{T}_{y1} \bm{T}_{y2} \right) - \frac{(m-1)(n-1)}{p}\sigma_{xxyy} (p)
\Rightarrow N(0, \sigma_{xx}\sigma_{yy}\sigma_{xy}^{2} )
\]
and that
\[ 
\frac{1}{(m-1) (n-1) p^2} {\rm tr}\left( 
\bm{T}_{x3} \bm{T}_{y3} \bm{T}_{x4} \bm{T}_{y4} \right) 
- \frac{(m-1)(n-1)}{p}\sigma_{xyxy} (p)
\Rightarrow N(0, \sigma_{xy}^{4} ).
 \]
As the first and second terms of \eqref{pt31} are independent, the conclusion follows from the Slutsky theorem.
\end{proof}

When the null hypothesis $\mathcal{H}_0: \theta = 0$ is true, Proposition~\ref{prop31} implies that 
\[ \frac{(m-1)(n-1)}{p}\hat{\theta} 
\Rightarrow 
N(0,\sigma_{xx}\sigma_{yy}\sigma_{xy}^{2} + \sigma_{xy}^{4}). \]
Hence, constructing a consistent estimator of $\sigma_{xx}\sigma_{yy}\sigma_{xy}^{2} + \sigma_{xy}^{4}$ enables us to propose a test procedure.
Let us denote by $\bm{T}_{x \cdot}$ and $\bm{T}_{y \cdot}$ the Scatter matrices calculated from two samples before splitting.
Define
\begin{eqnarray*} 
\hat\sigma_{xx} &=& \frac{(M-1)^2}{p(M-2)(M+1)}\left[ {\rm tr}\left( \bm{T}_{x \cdot} \bm{T}_{x \cdot} \right) -\frac{ \{ {\rm tr} (\bm{T}_{x \cdot}) \}^2 }{M-1} \right] , \\
\hat\sigma_{yy} &=& \frac{(N-1)^2}{p(N-2)(N+1)}\left[ {\rm tr}\left( \bm{T}_{y \cdot} \bm{T}_{y \cdot} \right) -\frac{ \{ {\rm tr} (\bm{T}_{y \cdot}) \}^2 }{N-1} \right] , \\
\hat\sigma_{xy} &=& \frac{1}{p} {\rm tr}(\bm{T}_{x \cdot} \bm{T}_{y \cdot}) . 
\end{eqnarray*}

\begin{rem}
The estimators $\hat\sigma_{xx}$ and $\hat\sigma_{yy}$ are originally introduced by Bai and Saranadasa~\cite{RefBS}.
It is known that under Assumption~\ref{ass2}, it holds that
\begin{equation} \label{pr32}
 \hat\sigma_{xx} \to^p \sigma_{xx}, \
 \hat\sigma_{yy} \to^p \sigma_{yy}, \
 \hat\sigma_{xy} \to^p \sigma_{xy} 
\end{equation}
as $p\to\infty$ with \eqref{ar2}.
\end{rem}

Let us propose the test statistic $T$ defined by 
\[ T = \frac{(m-1)(n-1)}{p} \frac{\hat{\theta}}{ \sqrt{\hat\sigma_{xx} \hat\sigma_{yy} \hat\sigma_{xy}^{2} + \hat\sigma_{xy}^{4}} }, \]
and we propose the following test procedure (approximate significance level is $\alpha$):
\begin{itemize}
\item If $T> \Phi^{-1}(1-\alpha)$ then reject $\mathcal{H}_0$;
\end{itemize}
where $\Phi^{-1}(\cdot)$ is the quantile function of the standard normal distribution.
This test procedure is justified by the following corollaries.
Particularly, Corollary~\ref{cor33} guarantees the consistency of our test procedure when $1/2<\delta<1$.

\begin{cor}
Under Assumption~\ref{ass2}, when $\mathcal{H}_0$ is true, ${\rm P}(T > \Phi^{-1}(1-\alpha)) \to \alpha$ as $p\to\infty$ with \eqref{ar2}.
\end{cor}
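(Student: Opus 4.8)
The plan is to derive the asymptotic standard normality of $T$ under $\mathcal{H}_0$ by combining Proposition~\ref{prop31} with the consistency statement \eqref{pr32} via Slutsky's theorem, and then to pass to the tail probability using the continuity of $\Phi$.

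First I would observe that under $\mathcal{H}_0$ one has $\theta = 0$, so Proposition~\ref{prop31} directly gives
\[
\frac{(m-1)(n-1)}{p}\,\hat\theta \ \Rightarrow\ N(0, v), \qquad v := \sigma_{xx}\sigma_{yy}\sigma_{xy}^{2} + \sigma_{xy}^{4},
\]
as $p \to \infty$ with \eqref{ar2}. Next I would check that $v > 0$: by Assumption~\ref{ass2} each of $\sigma_{xx}, \sigma_{yy}, \sigma_{xy}$ lies in $(0, \infty)$, so both summands defining $v$ are strictly positive. Consequently $t \mapsto \sqrt{t}$ is continuous at $v$, and by \eqref{pr32} together with the continuous mapping theorem the random normalizer satisfies
\[
\sqrt{\hat\sigma_{xx}\hat\sigma_{yy}\hat\sigma_{xy}^{2} + \hat\sigma_{xy}^{4}} \ \to^p\ \sqrt{v} > 0.
\]
Writing $T$ as the ratio of $\frac{(m-1)(n-1)}{p}\hat\theta$ to this normalizer, Slutsky's theorem then yields $T \Rightarrow N(0,1)$.

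Finally, since $\Phi$ is continuous, in particular at the fixed threshold $\Phi^{-1}(1-\alpha)$, weak convergence of $T$ to $N(0,1)$ gives
\[
{\rm P}\left(T > \Phi^{-1}(1-\alpha)\right) \ \to\ 1 - \Phi\left(\Phi^{-1}(1-\alpha)\right) = \alpha,
\]
which is the assertion.

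The step I would single out as the main (albeit mild) obstacle is verifying that the limiting variance $v$ is bounded away from zero: this is exactly what makes the denominator of $T$ asymptotically nondegenerate, so that Slutsky's theorem applies and $T$ does not collapse to a degenerate limit. In the present setup it is immediate from the positivity of $\sigma_{xy}$ (and of $\sigma_{xx}, \sigma_{yy}$) imposed in Assumption~\ref{ass2}; were those positivity conditions relaxed, this would have to be argued separately.
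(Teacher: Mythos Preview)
Your proof is correct and follows essentially the same approach as the paper: combine Proposition~\ref{prop31} (which under $\mathcal{H}_0$ gives convergence of the numerator to $N(0,v)$), the consistency statement \eqref{pr32}, and Slutsky's theorem to conclude $T \Rightarrow N(0,1)$, then read off the tail probability. Your explicit verification that $v>0$ and the continuity argument for the tail probability are details the paper leaves implicit, but the route is the same.
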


\begin{proof}
When $\mathcal{H}_0$ is true, Proposition \ref{prop31} and \eqref{pr32} conjunction with the Slutsky theorem yield that $T \Rightarrow N(0,1)$ as $p\to\infty$ with \eqref{ar2}.
This completes the proof.
\end{proof}

\begin{cor}\label{cor33}
Under Assumption~\ref{ass2}, when $\mathcal{H}_1$ is true, if $ \sigma_{xxyy} - \sigma_{xyxy} > 0 $ then ${\rm P}(T>C) \to 1$ for any positive constant $C$ as $p\to\infty$ with 
$ m,n \asymp p^{\delta}$, $1/2<\delta < 1$.
\end{cor}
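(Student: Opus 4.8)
The plan is to show that $T$ diverges to $+\infty$ in probability under $\mathcal{H}_1$; then $P(T>C)\to 1$ for every fixed $C>0$ follows at once. Write $\theta_p=\sigma_{xxyy}(p)-\sigma_{xyxy}(p)$, so that by Assumption~\ref{ass2} we have $\theta_p\to\sigma_{xxyy}-\sigma_{xyxy}=:c$, which by hypothesis is strictly positive; in particular $\theta_p$ is bounded away from $0$ for all large $p$. I would start from the decomposition
\[
\frac{(m-1)(n-1)}{p}\hat\theta
= \frac{(m-1)(n-1)}{p}\,\theta_p + \frac{(m-1)(n-1)}{p}\bigl(\hat\theta-\theta_p\bigr),
\]
where, by Proposition~\ref{prop31}, the second summand converges in distribution to a centered normal law with finite variance and is therefore $O_p(1)$.

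Next I would analyse the first summand. Under $m,n\asymp p^{\delta}$ we have $(m-1)(n-1)\asymp p^{2\delta}$, hence $(m-1)(n-1)/p\asymp p^{2\delta-1}$, and since $1/2<\delta<1$ this tends to $\infty$. Multiplying by $\theta_p\to c>0$ gives $\frac{(m-1)(n-1)}{p}\theta_p\to\infty$, and adding the $O_p(1)$ remainder yields $\frac{(m-1)(n-1)}{p}\hat\theta\to^p\infty$. This is exactly where the restriction $\delta>1/2$ enters, and it is the only substantive point in the argument; everything else is bookkeeping.

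Finally I would control the denominator. By \eqref{pr32}, $\hat\sigma_{xx}\to^p\sigma_{xx}$, $\hat\sigma_{yy}\to^p\sigma_{yy}$, $\hat\sigma_{xy}\to^p\sigma_{xy}$ with $\sigma_{xx},\sigma_{yy},\sigma_{xy}\in(0,\infty)$, so the continuous mapping theorem gives
\[
\sqrt{\hat\sigma_{xx}\hat\sigma_{yy}\hat\sigma_{xy}^{2}+\hat\sigma_{xy}^{4}}\ \to^p\ d:=\sqrt{\sigma_{xx}\sigma_{yy}\sigma_{xy}^{2}+\sigma_{xy}^{4}}\in(0,\infty).
\]
Thus $T$ is a sequence diverging to $+\infty$ in probability divided by one converging in probability to the positive constant $d$: given any $C>0$, with probability tending to one the denominator is below $2d$ while the numerator exceeds $2dC$, whence $T>C$. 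This gives $P(T>C)\to 1$. I expect no real obstacle here; the only point requiring a moment's care is that the denominator stays bounded away from $0$ with probability tending to one, which is immediate from its convergence to the strictly positive constant $d$.
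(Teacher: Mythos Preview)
Your proof is correct and follows essentially the same approach as the paper: both split off the $O_p(1)$ piece $\frac{(m-1)(n-1)}{p}(\hat\theta-\theta_p)$ via Proposition~\ref{prop31}, observe that $\frac{(m-1)(n-1)}{p}\theta_p\to\infty$ because $(m-1)(n-1)/p\asymp p^{2\delta-1}\to\infty$ and $\theta_p\to\sigma_{xxyy}-\sigma_{xyxy}>0$, and control the denominator through \eqref{pr32}. The only cosmetic difference is that the paper decomposes $T$ itself and bounds $\theta_p$ below by $\inf_{q\ge p}\{\sigma_{xxyy}(q)-\sigma_{xyxy}(q)\}$, whereas you decompose the numerator first and invoke $\theta_p\to c>0$ directly; your version is arguably cleaner.
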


\begin{proof}
It holds that
\begin{eqnarray*}
T &=& 
\frac{(m-1)(n-1)}{p} \frac{\hat{\theta} - \theta }{ \sqrt{\hat\sigma_{xx} \hat\sigma_{yy} \hat\sigma_{xy}^{2} + \hat\sigma_{xy}^{4}} } 
+\frac{(m-1)(n-1)}{p} \frac{\sigma_{xxyy}(p) - \sigma_{xyxy}(p) }{ \sqrt{\hat\sigma_{xx} \hat\sigma_{yy} \hat\sigma_{xy}^{2} + \hat\sigma_{xy}^{4}} }\\
&\geq& \frac{(m-1)(n-1)}{p} \frac{ \hat{\theta} - \theta }{ \sqrt{\hat\sigma_{xx} \hat\sigma_{yy} \hat\sigma_{xy}^{2} + \hat\sigma_{xy}^{4}} } 
\\&& 
+\frac{(m-1)(n-1)}{p} \frac{\inf_{q\geq p} \{\sigma_{xxyy}(q) - \sigma_{xyxy}(q)\} }{ \sqrt{\hat\sigma_{xx} \hat\sigma_{yy} \hat\sigma_{xy}^{2} + \hat\sigma_{xy}^{4}} }
\end{eqnarray*}
The first term of the right-hand side is $O_P(1)$ by using Proposition~\ref{prop31}, \eqref{pr32} and the Slutsky theorem.
The second term tends to positive infinity because $(m-1)(n-1)/p \to \infty$ and $\inf_{q\geq p} \{\sigma_{xxyy}(q) - \sigma_{xyxy}(q)\} \to \sigma_{xxyy} - \sigma_{xyxy} > 0$ as $p\to\infty$.
This completes the proof.
\end{proof}

\begin{rem}
There is another natural unbiased estimator of $\theta$ other than $\hat\theta$.
For instance,
\begin{eqnarray*}
&& 
\frac{1}{(M-2)(M+1)(N-2)(N+1)}
\biggl[ 
{\rm tr}(\bm{T}_{x\cdot} \bm{T}_{x\cdot} \bm{T}_{y\cdot} \bm{T}_{y\cdot}) 
- \frac{1}{N-1} {\rm tr}(\bm{T}_{x\cdot} \bm{T}_{x\cdot} \bm{T}_{y\cdot}) {\rm tr}(\bm{T}_{y\cdot})  \\
&& - \frac{1}{M-1} {\rm tr}(\bm{T}_{x\cdot} \bm{T}_{y\cdot} \bm{T}_{y\cdot}) {\rm tr}(\bm{T}_{x\cdot})
+ \frac{1}{(M-1)(N-1)}  {\rm tr}(\bm{T}_{x\cdot} \bm{T}_{y\cdot}) {\rm tr}(\bm{T}_{x\cdot}) {\rm tr}(\bm{T}_{y\cdot}) \\
&&  -\frac{MN-M-N+3}{(M-1)(N-1)} {\rm tr}(\bm{T}_{x\cdot} \bm{T}_{y\cdot} \bm{T}_{x\cdot} \bm{T}_{y\cdot}) 
+ \frac{M+N-1}{(M-1)(N-1)} {\rm tr}(\bm{T}_{x\cdot} \bm{T}_{y\cdot}) {\rm tr}(\bm{T}_{x\cdot} \bm{T}_{y\cdot})
\biggr]
\end{eqnarray*}
is an unbiased estimator of $\theta$.
Deriving the asymptotic behavior of this quantity is a possible future direction.
\end{rem}

\section{Preliminary results}\label{sec:4}

\subsection{Results for quadratic form of standard normal vectors}

In this subsection, we provide some properties concerning quadratic form of standard normal vectors.

\begin{lem}\label{lem41}
If $\bm{x} = (x_1,\ldots,x_p) \sim N_p(\bm{0}_p,\bm{I}_p)$ then
\begin{eqnarray*}
&& {\rm E} \left[ (\bm{x}'\bm{A}\bm{x})^2 \right] 
= 2{\rm tr}(\bm{A}^2) + \left\{ {\rm tr}(\bm{A}) \right\}^2, \\
&& {\rm E} \left[ (\bm{x}'\bm{A}\bm{x})^3 \right] 
= 8{\rm tr}(\bm{A}^3) + 6{\rm tr}(\bm{A}^2){\rm tr}(\bm{A}) + \left( {\rm tr}(\bm{A}) \right)^3, 
\\
&& {\rm E}\left[ (\bm{x}'\bm{A}\bm{x})^4 \right] \\
&&= 48{\rm tr}(\bm{A}^4) 
+ 32{\rm tr}(\bm{A}^3){\rm tr}(\bm{A})
+ 12\left( {\rm tr}(\bm{A}^2) \right)^2
+ 12{\rm tr}(\bm{A}^2) \left( {\rm tr}(\bm{A}) \right)^2
+ \left( {\rm tr}(\bm{A}) \right)^4 
\end{eqnarray*}
for any $p\times p$ symmetric matrix $\bm{A}$.
\end{lem}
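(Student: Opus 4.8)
The plan is to diagonalize $\bm{A}$, reduce everything to moments of a linear combination of independent $\chi^2_1$ variables, and then read the answer off from the cumulant generating function of a Gaussian quadratic form. First, since $\bm{A}$ is symmetric, write $\bm{A} = \bm{P}\bm{\Lambda}\bm{P}'$ with $\bm{P}$ orthogonal and $\bm{\Lambda} = {\rm diag}(\lambda_1,\dots,\lambda_p)$. Because $\bm{y} := \bm{P}'\bm{x} \sim N_p(\bm{0}_p,\bm{I}_p)$ as well and $\bm{x}'\bm{A}\bm{x} = \bm{y}'\bm{\Lambda}\bm{y} = \sum_{i=1}^p \lambda_i y_i^2 =: Q$, it suffices to compute ${\rm E}[Q^k]$ for $k=2,3,4$ with $y_1,\dots,y_p$ iid $N(0,1)$; each such moment is a symmetric polynomial in $\lambda_1,\dots,\lambda_p$, hence a polynomial in the power sums $\sum_{i=1}^p \lambda_i^j = {\rm tr}(\bm{\Lambda}^j) = {\rm tr}(\bm{A}^j)$.

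Next I would compute the cumulants of $Q$. From ${\rm E}[e^{tQ}] = \prod_{i=1}^p (1-2t\lambda_i)^{-1/2}$ one obtains the cumulant generating function $-\tfrac12\sum_{i=1}^p \log(1-2t\lambda_i) = \sum_{j\ge 1} \tfrac{2^{j-1}}{j}\,{\rm tr}(\bm{A}^j)\,t^j$, so the $j$-th cumulant is $\kappa_j = 2^{j-1}(j-1)!\,{\rm tr}(\bm{A}^j)$; in particular $\kappa_1 = {\rm tr}(\bm{A})$, $\kappa_2 = 2\,{\rm tr}(\bm{A}^2)$, $\kappa_3 = 8\,{\rm tr}(\bm{A}^3)$ and $\kappa_4 = 48\,{\rm tr}(\bm{A}^4)$. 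Then I would invoke the standard moment--cumulant identities ${\rm E}[Q^2] = \kappa_2 + \kappa_1^2$, ${\rm E}[Q^3] = \kappa_3 + 3\kappa_1\kappa_2 + \kappa_1^3$ and ${\rm E}[Q^4] = \kappa_4 + 4\kappa_1\kappa_3 + 3\kappa_2^2 + 6\kappa_1^2\kappa_2 + \kappa_1^4$, and substitute the values above to recover exactly the three displayed formulas.

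Alternatively, and more elementarily, one can expand $Q^k$ by the multinomial theorem, use independence together with the one-dimensional even moments ${\rm E}[y^2]=1$, ${\rm E}[y^4]=3$, ${\rm E}[y^6]=15$, ${\rm E}[y^8]=105$, and collect the resulting sums over index tuples according to the pattern of coincidences among the indices; each pattern contributes a product of traces ${\rm tr}(\bm{A}^j)$ with an explicit combinatorial multiplicity. Neither route poses a genuine obstacle; the only point that needs care is the bookkeeping of these multiplicities in the fourth-moment case (the coefficients $48, 32, 12, 12, 1$), and the cumulant route is attractive precisely because it replaces that bookkeeping by the elementary power-series expansion of $\log(1-2t\lambda)$ followed by one application of the known degree-$\le 4$ moment--cumulant formulas.
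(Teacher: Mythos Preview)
Your proof is correct. Both you and the paper begin with the same spectral-decomposition reduction to $Q=\sum_i\lambda_i y_i^2$, but from there the paper takes your ``alternative'' route: it expands $Q^k$ by the multinomial theorem, inserts the one-dimensional moments ${\rm E}[y^{2m}]=1,3,15,105$, and then performs a careful telescoping of the resulting index-pattern sums (e.g.\ reducing $105\sum\lambda_i^4 + 60\sum_{i\ne j}\lambda_i^3\lambda_j + \cdots$ step by step to power-sum products). Your primary route via the cumulant generating function $-\tfrac12\sum_i\log(1-2t\lambda_i)$ and the moment--cumulant relations is genuinely different and cleaner: it replaces the combinatorial bookkeeping of coincidence patterns by a single power-series expansion yielding $\kappa_j=2^{j-1}(j-1)!\,{\rm tr}(\bm{A}^j)$, after which the coefficients $48,32,12,12,1$ drop out of the standard degree-four Bell-polynomial identity with no further simplification needed. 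The paper's direct expansion has the virtue of being entirely elementary (no cumulants or generating functions), but your cumulant argument scales better if one ever needed higher moments and makes the structure of the answer transparent.
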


\textit{Proof.}
By using the spectral decomposition, $\bm{A}$ is denoted by $\bm{U}\bm{\Lambda}\bm{U}'$, 
where $\bm{U}$ is an orthogonal matrix and $\bm{\Lambda}={\rm diag}(\lambda_1,\ldots,\lambda_p)$.
It holds that
\begin{eqnarray*}
\lefteqn{{\rm E} \left[ (\bm{x}'\bm{A}\bm{x})^2 \right] 
= {\rm E} \left[ (\bm{x}'\bm{\Lambda}\bm{x})^2 \right] 
= {\rm E} \left[ \left( \sum_{i=1}^{p}\lambda_{i}x_{i}^{2} \right)^2 \right] 
= {\rm E} \left[ \sum_{i=1}^{p}\lambda_{i}^{2}x_{i}^{4} 
+ \sum_{i=1}^{p}\sum_{j\neq i}\lambda_{i}x_{i}^{2}\lambda_{j}x_{j}^{2}
 \right]} \\
&=& 3\sum_{i=1}^{p}\lambda_{i}^{2} 
+ \sum_{i=1}^{p}\sum_{j\neq i}\lambda_{i}\lambda_{j}
= 2\sum_{i=1}^{p}\lambda_{i}^{2} 
+ \left( \sum_{i=1}^{p}\lambda_i \right)^2
= 2{\rm tr}(\bm{A}^2)
+ \left( {\rm tr}(\bm{A}) \right)^2.
\end{eqnarray*}
Moreover, it holds that
\begin{eqnarray*}
\lefteqn{{\rm E} \left[ (\bm{x}'\bm{A}\bm{x})^3 \right] 
= {\rm E} \left[ (\bm{x}'\bm{\Lambda}\bm{x})^3 \right] 
= {\rm E} \left[ \left( \sum_{i=1}^{p}\lambda_{i}x_{i}^{2} \right)^3 \right] }
\\
&=& {\rm E} \left[ \sum_{i=1}^{p}\lambda_{i}^{3}x_{i}^{6} 
+ 3\sum_{i=1}^{p}\sum_{j\neq i}\lambda_{i}^{2}x_{i}^{4}\lambda_{j}x_{j}^{2}
+ \sum_{i=1}^{p}\sum_{j\neq i}\sum_{k\neq i,j}\lambda_{i}x_{i}^{2}\lambda_{j}x_{j}^{2}
\lambda_{k}x_{k}^{2} \right] \\
&=& 15\sum_{i=1}^{p}\lambda_{i}^{3} 
+ 9\sum_{i=1}^{p}\sum_{j\neq i}\lambda_{i}^{2}\lambda_{j}
+ \sum_{i=1}^{p}\sum_{j\neq i}\sum_{k\neq i,j}\lambda_{i}\lambda_{j}\lambda_{k} 
= 14\sum_{i=1}^{p}\lambda_{i}^{3} 
+ 6\sum_{i=1}^{p}\sum_{j\neq i}\lambda_{i}^{2}\lambda_{j}
+ \left( \sum_{i=1}^{p}\lambda_i \right)^3
\\
&=& 8\sum_{i=1}^{p}\lambda_{i}^{3} 
+ 6\left( \sum_{i=1}^{p}\lambda_{i}^{2} \right) \left( \sum_{i=1}^{p}\lambda_i \right)
+ \left( \sum_{i=1}^{p}\lambda_i \right)^3
= 8{\rm tr}(\bm{A}^3)
+ 6{\rm tr}(\bm{A}^2){\rm tr}(\bm{A})
+ \left( {\rm tr}(\bm{A}) \right)^3.
\end{eqnarray*}
Furthermore, it holds that
\begin{eqnarray*}
\lefteqn{{\rm E} \left[ (\bm{x}'\bm{A}\bm{x})^4 \right] 
= {\rm E} \left[ (\bm{x}'\bm{\Lambda}\bm{x})^4 \right] 
= {\rm E} \left[ \left( \sum_{i=1}^{p}\lambda_{i}x_{i}^{2} \right)^4 \right] }
\\
&=& {\rm E} \left[ \sum_{i=1}^{p}\lambda_{i}^{4}x_{i}^{8} 
+ 4\sum_{i=1}^{p}\sum_{j\neq i}\lambda_{i}^{3}x_{i}^{6}\lambda_{j}x_{j}^{2}
+ 3\sum_{i=1}^{p}\sum_{j\neq i}\lambda_{i}^{2}x_{i}^{4}\lambda_{j}^{2}x_{j}^{4}
+ 6\sum_{i=1}^{p}\sum_{j\neq i}\sum_{k\neq i,j}\lambda_{i}^{2}x_{i}^{4}\lambda_{j}x_{j}^{2}
\lambda_{k}x_{k}^{2} \right. \\
&& \left. + \sum_{i=1}^{p}\sum_{j\neq i}\sum_{k\neq i,j}\sum_{l\neq i,j,k}
\lambda_{i}x_{i}^{2}\lambda_{j}x_{j}^{2}\lambda_{k}x_{k}^{2}\lambda_{l}x_{l}^{2} 
\right] 
\\
&=& 105\sum_{i=1}^{p}\lambda_{i}^{4} 
+ 60\sum_{i=1}^{p}\sum_{j\neq i}\lambda_{i}^{3}\lambda_{j}
+ 27\sum_{i=1}^{p}\sum_{j\neq i}\lambda_{i}^{2}\lambda_{j}^{2}
+ 18\sum_{i=1}^{p}\sum_{j\neq i}\sum_{k\neq i,j}\lambda_{i}^{2}\lambda_{j}\lambda_{k} 
+ \sum_{i=1}^{p}\sum_{j\neq i}\sum_{k\neq i,j}\sum_{l\neq i,j,k}
\lambda_{i}\lambda_{j}\lambda_{k}\lambda_{l} 
\\
&=& 104\sum_{i=1}^{p}\lambda_{i}^{4} 
+ 56\sum_{i=1}^{p}\sum_{j\neq i}\lambda_{i}^{3}\lambda_{j}
+ 24\sum_{i=1}^{p}\sum_{j\neq i}\lambda_{i}^{2}\lambda_{j}^{2}
+ 12\sum_{i=1}^{p}\sum_{j\neq i}\sum_{k\neq i,j}\lambda_{i}^{2}\lambda_{j}\lambda_{k} 
+ \left( \sum_{i=1}^{p}\lambda_i \right)^4 
\\
&=& 92\sum_{i=1}^{p}\lambda_{i}^{4} 
+ 32\sum_{i=1}^{p}\sum_{j\neq i}\lambda_{i}^{3}\lambda_{j}
+ 12\sum_{i=1}^{p}\sum_{j\neq i}\lambda_{i}^{2}\lambda_{j}^{2}
+ 12\left( \sum_{i=1}^{p}\lambda_{i}^{2} \right) \left( \sum_{i=1}^{p}\lambda_i \right)^2
+ \left( \sum_{i=1}^{p}\lambda_i \right)^4 
\\
&=& 80\sum_{i=1}^{p}\lambda_{i}^{4} 
+ 32\sum_{i=1}^{p}\sum_{j\neq i}\lambda_{i}^{3}\lambda_{j}
+ 12\left( \sum_{i=1}^{p}\lambda_{i}^{2} \right)^2
+ 12\left( \sum_{i=1}^{p}\lambda_{i}^{2} \right) \left( \sum_{i=1}^{p}\lambda_i \right)^2
+ \left( \sum_{i=1}^{p}\lambda_i \right)^4 
\\
&=& 48\sum_{i=1}^{p}\lambda_{i}^{4} 
+ 32\left( \sum_{i=1}^{p}\lambda_{i}^{3} \right) \left( \sum_{i=1}^{p}\lambda_i \right)
+ 12\left( \sum_{i=1}^{p}\lambda_{i}^{2} \right)^2
+ 12\left( \sum_{i=1}^{p}\lambda_{i}^{2} \right) \left( \sum_{i=1}^{p}\lambda_i \right)^2
+ \left( \sum_{i=1}^{p}\lambda_i \right)^4 
\\
&=& 48{\rm tr}(\bm{\Lambda}^4) 
+ 32{\rm tr}(\bm{\Lambda}^3){\rm tr}(\bm{\Lambda})
+ 12\left( {\rm tr}(\bm{\Lambda}^2) \right)^2
+ 12{\rm tr}(\bm{\Lambda}^2) \left( {\rm tr}(\bm{\Lambda}) \right)^2
+ \left( {\rm tr}(\bm{\Lambda}) \right)^4 
\\
&=& 48{\rm tr}(\bm{A}^4) 
+ 32{\rm tr}(\bm{A}^3){\rm tr}(\bm{A})
+ 12\left( {\rm tr}(\bm{A}^2) \right)^2
+ 12{\rm tr}(\bm{A}^2) \left( {\rm tr}(\bm{A}) \right)^2
+ \left( {\rm tr}(\bm{A}) \right)^4. 
\end{eqnarray*}
This completes the proof.
\qed

By using Lemma~\ref{lem41}, we can show the following assertions which will be used in Section~\ref{sec:proof}.

\begin{lem}\label{lem42}
If $\bm{x}\sim N_p(\bm{0}_p,\bm{I}_p)$ then
\[
 {\rm E} \left[ \left\{ \bm{x}'\bm{A}\bm{x}-{\rm tr}(\bm{A}) \right\}^2 \right] 
= {\rm tr}(\bm{A}\bm{A}) + {\rm tr}(\bm{A}\bm{A}') 
\]
for any $p\times p$ matrix $\bm{A}$.
\end{lem}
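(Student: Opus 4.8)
The plan is to reduce the non-symmetric case to the symmetric one already handled by Lemma~\ref{lem41}. Since $\bm{x}'\bm{A}\bm{x}$ is a scalar it equals its own transpose, so $\bm{x}'\bm{A}\bm{x} = \bm{x}'\bm{A}'\bm{x} = \bm{x}'\bm{B}\bm{x}$ where $\bm{B} = (\bm{A}+\bm{A}')/2$ is symmetric; moreover ${\rm tr}(\bm{A}) = {\rm tr}(\bm{B})$. First I would expand the square as
\[
{\rm E}\left[ \left\{ \bm{x}'\bm{B}\bm{x} - {\rm tr}(\bm{B}) \right\}^2 \right]
= {\rm E}\left[ (\bm{x}'\bm{B}\bm{x})^2 \right] - 2\,{\rm tr}(\bm{B})\,{\rm E}[\bm{x}'\bm{B}\bm{x}] + \left( {\rm tr}(\bm{B}) \right)^2 .
\]
Using ${\rm E}[\bm{x}'\bm{B}\bm{x}] = {\rm tr}(\bm{B})$ and the first identity of Lemma~\ref{lem41}, ${\rm E}[(\bm{x}'\bm{B}\bm{x})^2] = 2\,{\rm tr}(\bm{B}^2) + ({\rm tr}(\bm{B}))^2$, the three terms collapse to $2\,{\rm tr}(\bm{B}^2)$.

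Next I would rewrite $2\,{\rm tr}(\bm{B}^2)$ in terms of $\bm{A}$. Since $\bm{B}^2 = \tfrac14(\bm{A}+\bm{A}')^2 = \tfrac14\left( \bm{A}^2 + \bm{A}\bm{A}' + \bm{A}'\bm{A} + (\bm{A}')^2 \right)$, and using ${\rm tr}((\bm{A}')^2) = {\rm tr}(\bm{A}^2)$ together with ${\rm tr}(\bm{A}'\bm{A}) = {\rm tr}(\bm{A}\bm{A}')$, one gets $2\,{\rm tr}(\bm{B}^2) = \tfrac12\left( 2\,{\rm tr}(\bm{A}^2) + 2\,{\rm tr}(\bm{A}\bm{A}') \right) = {\rm tr}(\bm{A}\bm{A}) + {\rm tr}(\bm{A}\bm{A}')$, which is the claimed identity.

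There is essentially no obstacle here: the only point requiring a little care is the symmetrization step and the bookkeeping of which traces are transpose-invariant, so that the final answer is stated symmetrically in $\bm{A}$ and $\bm{A}'$ rather than in $\bm{B}$. An alternative, entirely elementary route avoids Lemma~\ref{lem41} altogether: write $\bm{x}'\bm{A}\bm{x} - {\rm tr}(\bm{A}) = \sum_{i,j} A_{ij}(x_i x_j - \delta_{ij})$ and compute the second moment directly from ${\rm E}[(x_ix_j-\delta_{ij})(x_kx_l-\delta_{kl})] = \delta_{ik}\delta_{jl} + \delta_{il}\delta_{jk}$ for standard Gaussian coordinates, which yields $\sum_{i,j}A_{ij}(A_{ij} + A_{ji}) = {\rm tr}(\bm{A}\bm{A}') + {\rm tr}(\bm{A}\bm{A})$ immediately; I would keep this as a remark or fallback but present the symmetrization argument as the main proof since it reuses Lemma~\ref{lem41}.
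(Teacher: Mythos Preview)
Your proof is correct and follows essentially the same route as the paper: symmetrize via $\bm{B}=(\bm{A}+\bm{A}')/2$, apply the first identity of Lemma~\ref{lem41} to obtain $2\,{\rm tr}(\bm{B}^2)$, and then expand $(\bm{A}+\bm{A}')^2$ to rewrite this as ${\rm tr}(\bm{A}\bm{A})+{\rm tr}(\bm{A}\bm{A}')$. The only cosmetic difference is that the paper first records the symmetric case $2\,{\rm tr}(\bm{A}^2)$ explicitly before symmetrizing, whereas you symmetrize at the outset; the content is identical.
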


\textit{Proof}.
First, for symmetric $\bm{A}$, it follows from Lemma 4.1 that
\[ {\rm E} \left[ \left\{ \bm{x}'\bm{A}\bm{x}-{\rm tr}(\bm{A}) \right\}^2 \right] 
= {\rm E} \left[ \left( \bm{x}'\bm{A}\bm{x} \right)^2 
-2 \bm{x}'\bm{A}\bm{x} {\rm tr}(\bm{A}) 
+\left\{ {\rm tr}(\bm{A}) \right\}^2 
\right] 
= 2{\rm tr}(\bm{A}^2) + \left\{ {\rm tr}(\bm{A}) \right\}^2 -\left\{ {\rm tr}(\bm{A}) \right\}^2 
= 
2{\rm tr}(\bm{A}^2). \]
Hence, for possibly asymmetric $\bm{A}$, it holds that
\begin{eqnarray*}
\lefteqn{ {\rm E} \left[ \left\{ \bm{x}'\bm{A}\bm{x}-{\rm tr}(\bm{A}) \right\}^2 \right] }\\
&=& {\rm E} \left[ \left\{ \bm{x}'\left(\frac{\bm{A}+\bm{A}'}{2}\right)\bm{x}-{\rm tr}\left(\frac{\bm{A}+\bm{A}'}{2}\right) 
\right\}^2 \right]
= 2{\rm tr}\left( \left(\frac{\bm{A}+\bm{A}'}{2}\right)^2 \right) 
= \frac{1}{2}{\rm tr}\left( \left( \bm{A}+\bm{A}' \right)^2 \right) 
 \\
&=& \frac{1}{2}\left\{ {\rm tr}(\bm{A}\bm{A}) + {\rm tr}(\bm{A}\bm{A}') 
+ {\rm tr}(\bm{A}'\bm{A}) + {\rm tr}(\bm{A}'\bm{A}') \right\} 
= {\rm tr}(\bm{A}\bm{A}) + {\rm tr}(\bm{A}\bm{A}') .
\end{eqnarray*}
This completes the proof.
\qed

\begin{lem}\label{lem43}
If $\bm{x}\sim N_p(\bm{0}_p,\bm{I}_p)$ then
\[
{\rm E} \left[ \left( \bm{x}'\bm{A}\bm{x}-{\rm tr}(\bm{A}) \right)^4 \right] 
= 3{\rm tr}\left( \left( \bm{A}+\bm{A}' \right)^4 \right) 
+ \frac{3}{4}\left\{ {\rm tr}\left( \left( \bm{A}+\bm{A}' \right)^2 \right) \right\}^2
\]
for any $p\times p$ matrix $\bm{A}$.
\end{lem}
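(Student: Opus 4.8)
The plan is to reduce to the symmetric case exactly as in the proof of Lemma~\ref{lem42}, and then expand the fourth power using the moment formulas of Lemma~\ref{lem41}. First I would note that, since $\bm{x}'\bm{A}\bm{x}$ is a scalar and hence equals $\bm{x}'\bm{A}'\bm{x}$, we have $\bm{x}'\bm{A}\bm{x} = \bm{x}'\bm{B}\bm{x}$ and ${\rm tr}(\bm{A}) = {\rm tr}(\bm{B})$ with $\bm{B} = (\bm{A}+\bm{A}')/2$ symmetric. So it suffices to prove that for symmetric $\bm{B}$
\[
{\rm E}\left[ \left( \bm{x}'\bm{B}\bm{x} - {\rm tr}(\bm{B}) \right)^4 \right] = 48\,{\rm tr}(\bm{B}^4) + 12 \left( {\rm tr}(\bm{B}^2) \right)^2 ,
\]
and then substitute $\bm{A}+\bm{A}' = 2\bm{B}$: this turns $48\,{\rm tr}(\bm{B}^4)$ into $3\,{\rm tr}((\bm{A}+\bm{A}')^4)$ and $12({\rm tr}(\bm{B}^2))^2$ into $\frac{3}{4} \left( {\rm tr}((\bm{A}+\bm{A}')^2) \right)^2$, which is the claimed identity.

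For the symmetric statement I would write $t = {\rm tr}(\bm{B})$ and $Q = \bm{x}'\bm{B}\bm{x}$ and expand
\[
{\rm E}[(Q-t)^4] = {\rm E}[Q^4] - 4t\,{\rm E}[Q^3] + 6t^2\,{\rm E}[Q^2] - 4t^3\,{\rm E}[Q] + t^4 ,
\]
then substitute the four moments supplied by Lemma~\ref{lem41}, namely ${\rm E}[Q] = t$, ${\rm E}[Q^2] = 2\,{\rm tr}(\bm{B}^2)+t^2$, ${\rm E}[Q^3] = 8\,{\rm tr}(\bm{B}^3)+6t\,{\rm tr}(\bm{B}^2)+t^3$, and ${\rm E}[Q^4] = 48\,{\rm tr}(\bm{B}^4)+32t\,{\rm tr}(\bm{B}^3)+12({\rm tr}(\bm{B}^2))^2+12t^2\,{\rm tr}(\bm{B}^2)+t^4$. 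Collecting terms, the coefficient of $t\,{\rm tr}(\bm{B}^3)$ is $32-32=0$, the coefficient of $t^2\,{\rm tr}(\bm{B}^2)$ is $12-24+12=0$, and the coefficient of $t^4$ is $1-4+6-4+1=0$, so what remains is exactly $48\,{\rm tr}(\bm{B}^4)+12({\rm tr}(\bm{B}^2))^2$. Undoing the symmetrization as above completes the argument.

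I do not expect a genuine obstacle here: the proof is a direct computation built on Lemma~\ref{lem41}, and the only points requiring care are bookkeeping the three cancellations in the expansion and tracking the powers of $2$ that appear when passing between $\bm{B}$ and $\bm{A}+\bm{A}'$ at the beginning and the end.
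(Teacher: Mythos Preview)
Your proposal is correct and follows essentially the same approach as the paper: both reduce to the symmetric case via $\bm{B}=(\bm{A}+\bm{A}')/2$, expand $(\bm{x}'\bm{B}\bm{x}-{\rm tr}(\bm{B}))^4$ binomially, substitute the moment formulas from Lemma~\ref{lem41} to obtain $48\,{\rm tr}(\bm{B}^4)+12({\rm tr}(\bm{B}^2))^2$, and then undo the symmetrization to pick up the factors $3$ and $3/4$. The only cosmetic difference is that the paper treats the symmetric case first and symmetrizes afterward, whereas you symmetrize at the outset.
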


\textit{Proof}.
First, for symmetric $\bm{A}$, it follows from Lemma 4.1 that
\begin{eqnarray*}
\lefteqn{ {\rm E} \left[ \left( \bm{x}'\bm{A}\bm{x}-{\rm tr}(\bm{A}) \right)^4 \right]} \\
&=& {\rm E} \left[ \left( \bm{x}'\bm{A}\bm{x} \right)^4 
-4 \left( \bm{x}'\bm{A}\bm{x} \right)^3 {\rm tr}(\bm{A}) 
+6\left( \bm{x}'\bm{A}\bm{x} \right)^2 \left( {\rm tr}(\bm{A}) \right)^2
-4\bm{x}'\bm{A}\bm{x} \left( {\rm tr}(\bm{A}) \right)^3
+\left( {\rm tr}(\bm{A}) \right)^4 
\right] 
\\
&=& 
\left\{ 48{\rm tr}(\bm{A}^4) 
+ 32{\rm tr}(\bm{A}^3){\rm tr}(\bm{A})
+ 12\left( {\rm tr}(\bm{A}^2) \right)^2
+ 12{\rm tr}(\bm{A}^2) \left( {\rm tr}(\bm{A}) \right)^2
+ \left( {\rm tr}(\bm{A}) \right)^4 \right\} \\
&& -4 \left\{ 
8{\rm tr}(\bm{A}^3)
+ 6{\rm tr}(\bm{A}^2){\rm tr}(\bm{A})
+ \left( {\rm tr}(\bm{A}) \right)^3 \right\} {\rm tr}(\bm{A}) 
+6\left\{ 2{\rm tr}(\bm{A}^2)
+ \left( {\rm tr}(\bm{A}) \right)^2 \right\}  \left( {\rm tr}(\bm{A}) \right)^2 \\
&& -4{\rm tr}(\bm{A}) \left( {\rm tr}(\bm{A}) \right)^3
+\left( {\rm tr}(\bm{A}) \right)^4 
\\
&=& 
\left\{ 48{\rm tr}(\bm{A}^4) 
+ 32{\rm tr}(\bm{A}^3){\rm tr}(\bm{A})
+ 12\left( {\rm tr}(\bm{A}^2) \right)^2
+ 12{\rm tr}(\bm{A}^2) \left( {\rm tr}(\bm{A}) \right)^2
+ \left( {\rm tr}(\bm{A}) \right)^4 \right\} \\
&& - \left\{ 
32{\rm tr}(\bm{A}^3)
+ 24{\rm tr}(\bm{A}^2){\rm tr}(\bm{A})
+ 4\left( {\rm tr}(\bm{A}) \right)^3 \right\} {\rm tr}(\bm{A}) 
+\left\{ 12{\rm tr}(\bm{A}^2)
+ 6\left( {\rm tr}(\bm{A}) \right)^2 \right\}  \left( {\rm tr}(\bm{A}) \right)^2 \\
&& -4\left( {\rm tr}(\bm{A}) \right)^4
+\left( {\rm tr}(\bm{A}) \right)^4 
\\
&=& 48{\rm tr}(\bm{A}^4) 
+ 12\left( {\rm tr}(\bm{A}^2) \right)^2.
\end{eqnarray*}
Hence, for possibly asymmetric $\bm{A}$, it holds that
\begin{eqnarray*}
 {\rm E} \left[ \left( \bm{x}'\bm{A}\bm{x}-{\rm tr}(\bm{A}) \right)^4 \right]
&=& {\rm E} \left[ \left( \bm{x}'\left( \frac{\bm{A}+\bm{A}'}{2} \right) \bm{x}
-{\rm tr}\left( \frac{\bm{A}+\bm{A}'}{2} \right) \right)^4 \right] \\
&=& \frac{1}{16} {\rm E} \left[ \left( \bm{x}'\left( \bm{A}+\bm{A}' \right) \bm{x}
-{\rm tr}\left( \bm{A}+\bm{A}' \right) \right)^4 \right] \\
&=& \frac{1}{16}\left\{ 48{\rm tr}\left( \left( \bm{A}+\bm{A}' \right)^4 \right) 
+ 12\left( {\rm tr}\left( \left( \bm{A}+\bm{A}' \right)^2 \right) \right)^2 \right\} \\
&=& 3{\rm tr}\left( \left( \bm{A}+\bm{A}' \right)^4 \right) 
+ \frac{3}{4}\left\{ {\rm tr}\left( \left( \bm{A}+\bm{A}' \right)^2 \right) \right\}^2.
\end{eqnarray*}
This completes the proof.
\qed

\subsection{Results for Wishart matrices}

In this subsection, we provide some asymptotic properties concerning Wishart matrices under  a high-dimensional asymptotic regime.
Throughout this section, let $a$, $b$, and $p$ be positive integers, and $\bm{\Sigma}$, $\bm{\Sigma}_a$, and $\bm{\Sigma}_b$ be $p\times p$ positive definite matrices.
Before presenting results, recall that if $\bm{z} \sim N_p(\bm{0},\bm{\Sigma})$ then
\[ 
{\rm E} [(\bm{z}'\bm{A}\bm{z})\bm{z}\bm{z}'] 
= {\rm E} [\bm{z}\bm{z}'\bm{A}\bm{z}\bm{z}'] 
= \bm{\Sigma}\bm{A}\bm{\Sigma} + \bm{\Sigma}\bm{A}'\bm{\Sigma} 
+ {\rm tr}(\bm{\Sigma}\bm{A})\bm{\Sigma} 
\]
for any $p\times p$ matrix $\bm{A}$.

First we provide asymptotic evaluations for moments of a Wishart matrix.

\begin{prop}\label{prop44}
If $\bm{T} \sim W_p(a,\bm{\Sigma})$ then
\[ {\rm E} \left[ {\rm tr}\left( \bm{T} \bm{A} \bm{T} \bm{B} \right) \right] = O(ap^2) , \quad
{\rm E} \left[ {\rm tr}\left( \bm{T} \bm{A} \right) {\rm tr}\left( \bm{T}\bm{B} \right) \right] 
= O(a^2p^2)   \]
as $p \to \infty$ with 
\begin{equation}\label{ar3}
a \asymp p^\delta, \quad 0<\delta<1.
\end{equation}
for $p\times p $ matrices $\bm{A}$ and $\bm{B}$ which satisfy
\[
 {\rm tr}(\bm{\Sigma} \bm{A} \bm{\Sigma} \bm{B}) =O(p), \ {\rm tr}(\bm{\Sigma} \bm{A}' \bm{\Sigma} \bm{B}) =O(p), \ {\rm tr}(\bm{\Sigma} \bm{A}) =O(p), \ {\rm tr}(\bm{\Sigma} \bm{B}) =O(p) .
\]
\end{prop}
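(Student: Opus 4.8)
The plan is to expand $\bm{T}$ into its rank-one summands and reduce both quantities to second moments of the underlying Gaussian vectors, using only the identity for ${\rm E}[\bm{z}\bm{z}'\bm{A}\bm{z}\bm{z}']$ recalled just before the statement.

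First I would write $\bm{T}=\sum_{i=1}^{a}\bm{z}_i\bm{z}_i'$ with $\bm{z}_1,\dots,\bm{z}_a$ iid $N_p(\bm{0}_p,\bm{\Sigma})$, so that
\[ {\rm tr}(\bm{T}\bm{A}\bm{T}\bm{B})=\sum_{i=1}^{a}\sum_{j=1}^{a}(\bm{z}_i'\bm{A}\bm{z}_j)(\bm{z}_j'\bm{B}\bm{z}_i), \qquad {\rm tr}(\bm{T}\bm{A})\,{\rm tr}(\bm{T}\bm{B})=\sum_{i=1}^{a}\sum_{j=1}^{a}(\bm{z}_i'\bm{A}\bm{z}_i)(\bm{z}_j'\bm{B}\bm{z}_j), \]
and then split each double sum into the $a$ diagonal pairs $i=j$ and the $a(a-1)$ off-diagonal pairs $i\ne j$, taking expectations termwise (the sums being finite).

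For the diagonal pairs, which produce the same scalar $(\bm{z}_i'\bm{A}\bm{z}_i)(\bm{z}_i'\bm{B}\bm{z}_i)$ in both sums since $\bm{z}\bm{z}'\bm{A}\bm{z}\bm{z}'=(\bm{z}'\bm{A}\bm{z})\bm{z}\bm{z}'$, I would write $(\bm{z}'\bm{A}\bm{z})(\bm{z}'\bm{B}\bm{z})={\rm tr}(\bm{B}\,\bm{z}\bm{z}'\bm{A}\bm{z}\bm{z}')$ and apply the stated moment identity, obtaining after cyclic rearrangement
\[ {\rm E}[(\bm{z}'\bm{A}\bm{z})(\bm{z}'\bm{B}\bm{z})]={\rm tr}(\bm{\Sigma}\bm{A}\bm{\Sigma}\bm{B})+{\rm tr}(\bm{\Sigma}\bm{A}'\bm{\Sigma}\bm{B})+{\rm tr}(\bm{\Sigma}\bm{A})\,{\rm tr}(\bm{\Sigma}\bm{B}); \]
the four hypotheses on $\bm{A},\bm{B}$ make this $O(p^2)$, so the diagonal contribution is $a\cdot O(p^2)=O(ap^2)$ in both cases. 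For the off-diagonal pairs I would use independence: conditioning on $\bm{z}_i$ and integrating out $\bm{z}_j$ via ${\rm E}[\bm{z}_j\bm{z}_j']=\bm{\Sigma}$ and then ${\rm E}[\bm{z}_i'\bm{M}\bm{z}_i]={\rm tr}(\bm{\Sigma}\bm{M})$ gives ${\rm E}[(\bm{z}_i'\bm{A}\bm{z}_j)(\bm{z}_j'\bm{B}\bm{z}_i)]={\rm tr}(\bm{\Sigma}\bm{A}\bm{\Sigma}\bm{B})=O(p)$ for the first quantity, and ${\rm E}[(\bm{z}_i'\bm{A}\bm{z}_i)(\bm{z}_j'\bm{B}\bm{z}_j)]={\rm tr}(\bm{\Sigma}\bm{A})\,{\rm tr}(\bm{\Sigma}\bm{B})=O(p^2)$ for the second. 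Hence the off-diagonal contribution is $O(a^2p)$ in the first case and $O(a^2p^2)$ in the second, so the first expectation is $O(ap^2)+O(a^2p)$ and the second is $O(ap^2)+O(a^2p^2)$.

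It then remains to identify the dominant term, and this is the only place the regime \eqref{ar3} is used: since $a\asymp p^{\delta}$ with $\delta<1$ we have $a=o(p)$, hence $a^2p=o(ap^2)$ and the first expectation is $O(ap^2)$, while $ap^2=o(a^2p^2)$ and the second is $O(a^2p^2)$, as claimed. The main (and rather mild) obstacle is the bookkeeping in the diagonal step: one must check that every trace generated by the moment identity — in particular the one involving $\bm{A}'$ — coincides, up to a cyclic permutation, with one of the four quantities assumed to be $O(p)$; everything else is a straightforward order count.
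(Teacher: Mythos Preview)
Your proposal is correct and follows essentially the same approach as the paper: both expand $\bm{T}=\sum_i\bm{z}_i\bm{z}_i'$, split into diagonal and off-diagonal pairs, and apply the identity ${\rm E}[\bm{z}\bm{z}'\bm{A}\bm{z}\bm{z}']=\bm{\Sigma}\bm{A}\bm{\Sigma}+\bm{\Sigma}\bm{A}'\bm{\Sigma}+{\rm tr}(\bm{\Sigma}\bm{A})\bm{\Sigma}$. The only cosmetic difference is that the paper first computes the matrix-valued expectations ${\rm E}[\bm{T}\bm{A}\bm{T}]$ and ${\rm E}[{\rm tr}(\bm{T}\bm{A})\bm{T}]$ and then traces against $\bm{B}$, whereas you trace first and compute scalar expectations directly; the resulting terms and order estimates are identical.
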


\textit{Proof}
Let $\{ \bm{z}_i \}_{i=1}^{a}$ be an iid sequence satisfying $\bm{z}_1,\dots,\bm{z}_a \sim N_p(\bm{0},\bm{\Sigma})$.
Note that
\[ \bm{T} \stackrel{d}{=} \sum_{i=1}^{a}\bm{z}_i\bm{z}_i', \]
where $\stackrel{d}{=}$ means that the distributions of left-hand and right-hand sides are the same.
It follows from
\begin{eqnarray*}
\lefteqn{ {\rm E} \left[ \bm{T} \bm{A} \bm{T} \right] 
= {\rm E} \left[ \left( \sum_{i=1}^{a}\bm{z}_i\bm{z}_i' \right) \bm{A} 
\left( \sum_{i=1}^{a}\bm{z}_i\bm{z}_i' \right) \right] } \\
&=& \sum_{i=1}^{a} {\rm E} \left[ \bm{z}_i\bm{z}_i'\bm{A}\bm{z}_i\bm{z}_i' \right] 
+ \sum_{i=1}^{a}\sum_{j\neq i} {\rm E} \left[ \bm{z}_i\bm{z}_i'\bm{A}\bm{z}_j\bm{z}_j' \right] \\
&=& \sum_{i=1}^{a} \left\{ \bm{\Sigma} \bm{A} \bm{\Sigma} + \bm{\Sigma} \bm{A}' \bm{\Sigma} 
+ {\rm tr}(\bm{\Sigma} \bm{A})\bm{\Sigma} \right\} 
+ \sum_{i=1}^{a}\sum_{j\neq i} \bm{\Sigma} \bm{A} \bm{\Sigma} \\
&=& a^2 \bm{\Sigma} \bm{A} \bm{\Sigma} 
+ a \bm{\Sigma} \bm{A}' \bm{\Sigma} 
+ a {\rm tr}(\bm{\Sigma} \bm{A}) \bm{\Sigma}.
\end{eqnarray*}
that
\[
{\rm E} \left[ {\rm tr}\left( \bm{T} \bm{A} \bm{T} \bm{B} \right) \right] = O(a^2p) + O(ap^2) = O(ap^2) .
\]
Moreover, it follows from
\begin{eqnarray*}
\lefteqn{ {\rm E} \left[ {\rm tr}\left( \bm{T} \bm{A} \right) \bm{T} \right]
= {\rm E} \left[ {\rm tr}\left( \sum_{i=1}^{a}\bm{z}_i\bm{z}_i' \bm{A} \right) 
\left( \sum_{i=1}^{a}\bm{z}_i\bm{z}_i' \right) \right] } \\
&=& {\rm E} \left[ \left( \sum_{i=1}^{a}\bm{z}_i' \bm{A}\bm{z}_i \right) 
\left( \sum_{i=1}^{a}\bm{z}_i\bm{z}_i' \right) \right] \\
&=& \sum_{i=1}^{a} {\rm E} \left[ \left( \bm{z}_i' \bm{A}\bm{z}_i \right) 
\left( \bm{z}_i\bm{z}_i' \right) \right] 
+ \sum_{i=1}^{a}\sum_{j\neq i} {\rm E} \left[ \left( \bm{z}_i' \bm{A}\bm{z}_i \right) 
\left( \bm{z}_j\bm{z}_j' \right) \right] \\
&=& \sum_{i=1}^{a} 
\left\{ \bm{\Sigma}\bm{A}\bm{\Sigma} + \bm{\Sigma}\bm{A}'\bm{\Sigma} 
+ {\rm tr}(\bm{\Sigma}\bm{A})\bm{\Sigma} \right\} 
+ \sum_{i=1}^{a}\sum_{j\neq i}
{\rm tr}(\bm{\Sigma}\bm{A}) \bm{\Sigma} \\
&=& a\bm{\Sigma} \bm{A}\bm{\Sigma} 
+ a\bm{\Sigma} \bm{A}'\bm{\Sigma} 
+ a^2{\rm tr}(\bm{\Sigma}\bm{A}) \bm{\Sigma}
\end{eqnarray*}
that
\[
{\rm E} \left[ {\rm tr}\left( \bm{T} \bm{A} \right) {\rm tr}\left( \bm{T}\bm{B} \right) \right] 
= O(a^2p^2).
\]
This completes the proof.
\qed

\begin{prop}\label{prop45}
If $\bm{T} \sim W_p(a,\bm{\Sigma})$ then
\begin{eqnarray*} 
&& {\rm E} \left[ {\rm tr}\left( \bm{T} \bm{A} \bm{T} \bm{B} \bm{T} \bm{C} \right) \right] = O(ap^3),  \\
&& {\rm E} \left[ {\rm tr}\left( \bm{T} \bm{A} \bm{T} \bm{B} \right) {\rm tr}\left( \bm{T} \bm{C} \right) \right] 
= O(a^2 p^3) , \\
&& {\rm E} \left[ {\rm tr}\left( \bm{T} \bm{A} \right) {\rm tr}\left( \bm{T} \bm{B} \right) 
{\rm tr}\left( \bm{T} \bm{C} \right) \right] = O(a^3p^3) 
\end{eqnarray*}
as $p \to \infty$ with \eqref{ar3} 
for $p\times p $ matrices $\bm{A}$, $\bm{B}$, and $\bm{C}$ which satisfy
\[ {\rm tr}((\bm{\Sigma}\bm{A})^4) =O(p), \ {\rm tr}((\bm{\Sigma}\bm{B})^4) =O(p), \ {\rm tr}((\bm{\Sigma}\bm{C})^4) =O(p), \]
\[ {\rm tr}(\bm{\Sigma} \bm{A} \bm{\Sigma} \bm{B} \bm{\Sigma} \bm{C}) =O(p), \ 
{\rm tr}(\bm{\Sigma} \bm{A}' \bm{\Sigma} \bm{B} \bm{\Sigma} \bm{C}) =O(p), \] 
\[{\rm tr}(\bm{\Sigma} \bm{B}' \bm{\Sigma} \bm{A}' \bm{\Sigma} \bm{C}) =O(p),  \ 
{\rm tr}(\bm{\Sigma} \bm{A} \bm{\Sigma} \bm{B}' \bm{\Sigma} \bm{C}) =O(p),  \]
\[  {\rm tr}(\bm{\Sigma} \bm{A} \bm{\Sigma} \bm{B} \bm{\Sigma} \bm{C}') =O(p), \ 
{\rm tr}(\bm{\Sigma} \bm{A} \bm{\Sigma} \bm{C} \bm{\Sigma} \bm{B}) =O(p), \
{\rm tr}(\bm{\Sigma} \bm{A} \bm{\Sigma} \bm{C}' \bm{\Sigma} \bm{B}) =O(p), \]
\[ {\rm tr}(\bm{\Sigma} \bm{A} \bm{\Sigma} \bm{B} ) =O(p), \ 
{\rm tr}(\bm{\Sigma} \bm{A} \bm{\Sigma} \bm{C}) =O(p), \ 
{\rm tr}(\bm{\Sigma} \bm{B} \bm{\Sigma} \bm{C}) =O(p), \]
\[ {\rm tr}(\bm{\Sigma} \bm{A}' \bm{\Sigma} \bm{B} ) =O(p), \ 
{\rm tr}(\bm{\Sigma} \bm{A}' \bm{\Sigma} \bm{C}) =O(p), \ {\rm tr}(\bm{\Sigma} \bm{B}' \bm{\Sigma} \bm{C}) =O(p), \]
\[ {\rm tr}(\bm{\Sigma} \bm{A}) =O(p), \ {\rm tr}(\bm{\Sigma} \bm{B}) =O(p) , \ {\rm tr}(\bm{\Sigma} \bm{C}) =O(p). \]
\end{prop}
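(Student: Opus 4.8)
\textit{Proof plan.}
The plan is to argue in the same spirit as the proof of Proposition~\ref{prop44}. Fix an iid sequence $\bm{z}_1,\dots,\bm{z}_a\sim N_p(\bm{0},\bm{\Sigma})$ and use $\bm{T}\stackrel{d}{=}\sum_{i=1}^{a}\bm{z}_i\bm{z}_i'$. Expanding every occurrence of $\bm{T}$ turns each of the three quantities in the statement into a triple sum over indices in $\{1,\dots,a\}$; for instance
\[
{\rm E}\left[{\rm tr}(\bm{T}\bm{A}\bm{T}\bm{B}\bm{T}\bm{C})\right]
=\sum_{i=1}^{a}\sum_{j=1}^{a}\sum_{k=1}^{a}
{\rm E}\left[(\bm{z}_i'\bm{A}\bm{z}_j)(\bm{z}_j'\bm{B}\bm{z}_k)(\bm{z}_k'\bm{C}\bm{z}_i)\right],
\]
and similarly ${\rm E}[{\rm tr}(\bm{T}\bm{A}\bm{T}\bm{B}){\rm tr}(\bm{T}\bm{C})]=\sum_{i,j,k}{\rm E}[(\bm{z}_i'\bm{A}\bm{z}_j)(\bm{z}_j'\bm{B}\bm{z}_i)(\bm{z}_k'\bm{C}\bm{z}_k)]$ and ${\rm E}[{\rm tr}(\bm{T}\bm{A}){\rm tr}(\bm{T}\bm{B}){\rm tr}(\bm{T}\bm{C})]=\sum_{i,j,k}{\rm E}[(\bm{z}_i'\bm{A}\bm{z}_i)(\bm{z}_j'\bm{B}\bm{z}_j)(\bm{z}_k'\bm{C}\bm{z}_k)]$. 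By independence the summand depends only on which of $i,j,k$ coincide, so I would split each triple sum into the five patterns: all three equal; exactly two equal (three sub-patterns, according to which index is the odd one out); and $i,j,k$ pairwise distinct.

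Within a fixed pattern the expectation is evaluated by integrating out the $\bm{z}$'s one at a time, exactly as in the proof of Proposition~\ref{prop44}: an index occurring in a single outer product is removed using ${\rm E}[\bm{z}\bm{z}']=\bm{\Sigma}$; an index occurring as $(\bm{z}'\bm{M}\bm{z})\bm{z}\bm{z}'$ is removed using ${\rm E}[(\bm{z}'\bm{M}\bm{z})\bm{z}\bm{z}']=\bm{\Sigma}\bm{M}\bm{\Sigma}+\bm{\Sigma}\bm{M}'\bm{\Sigma}+{\rm tr}(\bm{\Sigma}\bm{M})\bm{\Sigma}$; and when all three indices have collapsed to one vector one is left with a moment such as ${\rm E}[(\bm{z}'\bm{A}\bm{z})(\bm{z}'\bm{B}\bm{z})(\bm{z}'\bm{C}\bm{z})]$, which after the substitution $\bm{z}=\bm{\Sigma}^{1/2}\bm{g}$ and symmetrisation of each matrix is handled by Lemmas~\ref{lem41}--\ref{lem43}; the fourth-order hypotheses such as ${\rm tr}((\bm{\Sigma}\bm{A})^4)=O(p)$ enter here (through Lemma~\ref{lem43} together with a Cauchy--Schwarz/H\"older step). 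In every pattern the outcome is a finite linear combination of products of at most three trace factors, each factor being of one of the forms ${\rm tr}(\bm{\Sigma}\bm{M}_1)$, ${\rm tr}(\bm{\Sigma}\bm{M}_1\bm{\Sigma}\bm{M}_2)$, ${\rm tr}(\bm{\Sigma}\bm{M}_1\bm{\Sigma}\bm{M}_2\bm{\Sigma}\bm{M}_3)$ with the $\bm{M}_r$ being transposes of distinct matrices from $\{\bm{A},\bm{B},\bm{C}\}$, or a fourth-order trace of a single matrix produced by Lemma~\ref{lem43}; by cyclicity of the trace and ${\rm tr}(\bm{M})={\rm tr}(\bm{M}')$ each such factor coincides with (or is bounded by) one of the quantities listed in the hypotheses, hence is $O(p)$, so a product of $t\le 3$ such factors is $O(p^{t})$.

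Multiplying by the number of index choices in each pattern, namely $a$, $a(a-1)$ or $a(a-1)(a-2)$, gives: for ${\rm E}[{\rm tr}(\bm{T}\bm{A}\bm{T}\bm{B}\bm{T}\bm{C})]$ the contributions $O(ap^3)$, $O(a^2p^2)$, $O(a^3p)$; for ${\rm E}[{\rm tr}(\bm{T}\bm{A}\bm{T}\bm{B}){\rm tr}(\bm{T}\bm{C})]$ the contributions $O(ap^3)$, $O(a^2p^3)$ (from the pattern where the two indices of the first trace coincide and the index of the second trace is separate), $O(a^2p^2)$, $O(a^3p^2)$; and for ${\rm E}[{\rm tr}(\bm{T}\bm{A}){\rm tr}(\bm{T}\bm{B}){\rm tr}(\bm{T}\bm{C})]$ the contributions $O(ap^3)$, $O(a^2p^3)$, $O(a^3p^3)$. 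In each case the claimed rate is the dominant term; to absorb the remaining ones I would use $a\asymp p^{\delta}$ with $0<\delta<1$, so that for instance $a^3p^2=p^{3\delta+2}=o(p^{2\delta+3})=o(a^2p^3)$ and $a^3p=o(ap^3)$ exactly because $\delta<1$.

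The only genuine difficulty is the bookkeeping: there is no new idea beyond the one already used for Proposition~\ref{prop44}, but one must enumerate all coincidence patterns, carry out the iterated Gaussian reductions in each, and---most laboriously---verify that every trace produced, including each transposed variant created by the $\bm{M}'$ terms in the moment identity and by Lemma~\ref{lem43}, is covered, modulo cyclic permutation and transposition, by the deliberately long list of hypotheses. That verification is exactly what the list is tailored for.
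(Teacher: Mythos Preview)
Your proposal is correct and follows essentially the same route as the paper: represent $\bm{T}$ as $\sum_i \bm{z}_i\bm{z}_i'$, split each triple sum into the five coincidence patterns, reduce each pattern via the identities ${\rm E}[\bm{z}\bm{z}']=\bm{\Sigma}$ and ${\rm E}[(\bm{z}'\bm{M}\bm{z})\bm{z}\bm{z}']=\bm{\Sigma}\bm{M}\bm{\Sigma}+\bm{\Sigma}\bm{M}'\bm{\Sigma}+{\rm tr}(\bm{\Sigma}\bm{M})\bm{\Sigma}$, and handle the fully-coincident term by a Cauchy--Schwarz/H\"older reduction to fourth moments bounded via Lemma~\ref{lem41}. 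Your list of contributions for each of the three quantities matches the paper's exactly, and your use of $\delta<1$ to compare the orders is the same closing step.
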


\textit{Proof}
Let $\{ \bm{z}_i \}_{i=1}^{a}$ be an iid sequence satisfying $\bm{z}_1,\dots,\bm{z}_a \sim N_p(\bm{0},\bm{\Sigma})$.
We see that
\begin{eqnarray*}
\lefteqn{ 
\sum_{i=1}^{a} {\rm E} \left[ \left( \bm{z}_i' \bm{A} \bm{z}_i \right) 
\left( \bm{z}_i' \bm{B} \bm{z}_i \right) \left( \bm{z}_i' \bm{C}\bm{z}_i \right) \right] 
= a {\rm E} \left[ \left( \bm{z}_1' \bm{A} \bm{z}_1 \right) 
\left( \bm{z}_1' \bm{B} \bm{z}_1 \right) \left( \bm{z}_1' \bm{C}\bm{z}_1 \right) \right] } \\
& \le & a \left( 
{\rm E} \left[ \left( \bm{z}_1' \bm{A} \bm{z}_1 \right)^2 
\left( \bm{z}_1' \bm{B} \bm{z}_1 \right)^2 \right] 
{\rm E} \left[ \left( \bm{z}_1' \bm{C} \bm{z}_1 \right)^2 \right]
\right)^{\frac{1}{2}} \\
& \le & a \left( 
{\rm E} \left[ \left( \bm{z}_1' \bm{A} \bm{z}_1 \right)^4 \right] 
{\rm E} \left[ \left( \bm{z}_1' \bm{B} \bm{z}_1 \right)^4 \right] 
{\rm E} \left[ \left( \bm{z}_1' \bm{C} \bm{z}_1 \right)^4 \right]
\right)^{\frac{1}{4}} \\
&=& O(ap^3),
\end{eqnarray*}
where ${\rm E} \left[ \left( \bm{z}_1' \bm{A} \bm{z}_1 \right)^4 \right] = O(p^4)$, ${\rm E} \left[ \left( \bm{z}_1' \bm{B} \bm{z}_1 \right)^4 \right] = O(p^4)$, and ${\rm E} \left[ \left( \bm{z}_1' \bm{C} \bm{z}_1 \right)^4 \right] = O(p^4)$ follow from Lemma 4.1 and $2 \bm{z}_1' \bm{A} \bm{z}_1 = \bm{z}_1' ( \bm{A}+\bm{A}' ) \bm{z}_1$.
Then, we have
\begin{eqnarray*}
&& {\rm E} \left[ {\rm tr}\left( \bm{T} \bm{A} \bm{T} \bm{B} \bm{T} \bm{C} \right) \right] \\
&=& {\rm tr}\left( {\rm E} \left[ \left( \sum_{i=1}^{a}\bm{z}_i\bm{z}_i' \right) \bm{A} 
\left( \sum_{j=1}^{a}\bm{z}_j\bm{z}_j' \right) \bm{B} \left( \sum_{k=1}^{a}\bm{z}_k\bm{z}_k' \right) \bm{C} \right] 
\right) \\
&=& {\rm tr}\left( {\rm E} \left[ \sum_{i=1}^{a}\sum_{j=1}^{a}\sum_{k=1}^{a}
\bm{z}_i\bm{z}_i' \bm{A} \bm{z}_j\bm{z}_j' \bm{B} \bm{z}_k\bm{z}_k' \bm{C} \right] \right) \\
&=& {\rm tr}\left( 
{\rm E} \left[ \sum_{i=1}^{a}\bm{z}_i\bm{z}_i' \bm{A} \bm{z}_i\bm{z}_i' \bm{B} \bm{z}_i\bm{z}_i' \bm{C} \right] 
+ {\rm E} \left[ \sum_{i=1}^{a}\sum_{k\neq i}
\bm{z}_i\bm{z}_i' \bm{A} \bm{z}_i\bm{z}_i' \bm{B} \bm{z}_k\bm{z}_k' \bm{C} \right] 
+ {\rm E} \left[ \sum_{i=1}^{a}\sum_{j\neq i}
\bm{z}_i\bm{z}_i' \bm{A} \bm{z}_j\bm{z}_j' \bm{B} \bm{z}_i\bm{z}_i' \bm{C} \right] \right. \\
&& \left. + {\rm E} \left[ \sum_{i=1}^{a}\sum_{j\neq i}
\bm{z}_i\bm{z}_i' \bm{A} \bm{z}_j\bm{z}_j' \bm{B} \bm{z}_j\bm{z}_j' \bm{C} \right] 
+ {\rm E} \left[ \sum_{i=1}^{a}\sum_{j\neq i}\sum_{k\neq i,j}
\bm{z}_i\bm{z}_i' \bm{A} \bm{z}_j\bm{z}_j' \bm{B} \bm{z}_k\bm{z}_k' \bm{C} \right] \right) \\
&=& \sum_{i=1}^{a} {\rm E} \left[ \left( \bm{z}_i' \bm{A} \bm{z}_i \right) 
\left( \bm{z}_i' \bm{B} \bm{z}_i \right) \left( \bm{z}_i' \bm{C}\bm{z}_i \right) \right] 
+ {\rm tr}\left( \sum_{i=1}^{a}\sum_{k\neq i}
\left\{ \left( \bm{\Sigma}\bm{A}\bm{\Sigma} + \bm{\Sigma}\bm{A}'\bm{\Sigma} 
+ {\rm tr}(\bm{\Sigma}\bm{A})\bm{\Sigma} \right) \bm{B} \bm{\Sigma} \bm{C} \right\} \right. \\
&& + \sum_{i=1}^{a}\sum_{j\neq i} \left\{ 
\left( \bm{\Sigma}\bm{A}\bm{\Sigma} \bm{B}\bm{\Sigma} + \bm{\Sigma}\bm{B}'\bm{\Sigma}\bm{A}'\bm{\Sigma} 
+ {\rm tr}(\bm{\Sigma}\bm{A}\bm{\Sigma} \bm{B})\bm{\Sigma} \right) \bm{C} \right\} \\
&& \left. + \sum_{i=1}^{a}\sum_{j\neq i}
\left\{ \bm{\Sigma} \bm{A} \left( \bm{\Sigma}\bm{B}\bm{\Sigma} + \bm{\Sigma}\bm{B}'\bm{\Sigma} 
+ {\rm tr}(\bm{\Sigma}\bm{B})\bm{\Sigma} \right) \bm{C} \right\} 
+ \sum_{i=1}^{a}\sum_{j\neq i}\sum_{k\neq i,j}
\left\{ \bm{\Sigma} \bm{A} \bm{\Sigma} \bm{B} \bm{\Sigma} \bm{C} \right\} \right) \\
&=& O(ap^3) + O(a^2p^2) + O(a^2p^2) + O(a^2p^2) + O(a^3p) \\
&=& O(ap^3) .
\end{eqnarray*}

The second assertion follows from
\begin{eqnarray*}
&& {\rm E} \left[ {\rm tr}\left( \bm{T} \bm{A} \bm{T} \bm{B} \right) {\rm tr}\left( \bm{T} \bm{C} \right) \right] \\
&=& {\rm E} \left[ {\rm tr}\left( \left( \sum_{i=1}^{a}\bm{z}_i\bm{z}_i' \right) \bm{A} 
\left( \sum_{j=1}^{a}\bm{z}_j\bm{z}_j' \right) \bm{B} \right) 
{\rm tr}\left( \left( \sum_{k=1}^{a}\bm{z}_k\bm{z}_k' \right) \bm{C} \right) \right] \\
&=& {\rm E} \left[ \sum_{i=1}^{a} \sum_{j=1}^{a} \sum_{k=1}^{a} 
{\rm tr}\left( \bm{z}_i\bm{z}_i' \bm{A} \bm{z}_j\bm{z}_j' \bm{B} \right) 
{\rm tr}\left( \bm{z}_k\bm{z}_k' \bm{C} \right) \right] \\
&=& {\rm E} \left[ \sum_{i=1}^{a} 
{\rm tr}\left( \bm{z}_i\bm{z}_i' \bm{A} \bm{z}_i\bm{z}_i' \bm{B} \right) 
{\rm tr}\left( \bm{z}_i\bm{z}_i' \bm{C} \right) \right] 
+ {\rm E} \left[ \sum_{i=1}^{a} \sum_{k\neq i} 
{\rm tr}\left( \bm{z}_i\bm{z}_i' \bm{A} \bm{z}_i\bm{z}_i' \bm{B} \right) 
{\rm tr}\left( \bm{z}_k\bm{z}_k' \bm{C} \right) \right] \\
&& + {\rm E} \left[ \sum_{i=1}^{a} \sum_{j\neq i}
{\rm tr}\left( \bm{z}_i\bm{z}_i' \bm{A} \bm{z}_j\bm{z}_j' \bm{B} \right) 
{\rm tr}\left( \bm{z}_i\bm{z}_i' \bm{C} \right) \right] 
+ {\rm E} \left[ \sum_{i=1}^{a} \sum_{j\neq i} 
{\rm tr}\left( \bm{z}_i\bm{z}_i' \bm{A} \bm{z}_j\bm{z}_j' \bm{B} \right) 
{\rm tr}\left( \bm{z}_j\bm{z}_j' \bm{C} \right) \right] \\
&& + {\rm E} \left[ \sum_{i=1}^{a} \sum_{j\neq i} \sum_{k\neq i,j} 
{\rm tr}\left( \bm{z}_i\bm{z}_i' \bm{A} \bm{z}_j\bm{z}_j' \bm{B} \right) 
{\rm tr}\left( \bm{z}_k\bm{z}_k' \bm{C} \right) \right] 
 \\
&=& {\rm E} \left[ \sum_{i=1}^{a} (\bm{z}_i' \bm{A} \bm{z}_i) (\bm{z}_i' \bm{B} \bm{z}_i)
( \bm{z}_i' \bm{C} \bm{z}_i ) \right] 
+ \sum_{i=1}^{a} \sum_{k\neq i} 
{\rm tr}\left( \left( 
\bm{\Sigma}\bm{A}\bm{\Sigma} + \bm{\Sigma}\bm{A}'\bm{\Sigma} 
+ {\rm tr}(\bm{\Sigma}\bm{A})\bm{\Sigma} \right) 
\bm{B} \right) 
{\rm tr}\left( \bm{\Sigma} \bm{C} \right) \\
&& + \sum_{i=1}^{a} \sum_{j\neq i}
{\rm tr}\left( \left( 
\bm{\Sigma}\bm{C}\bm{\Sigma} + \bm{\Sigma}\bm{C}'\bm{\Sigma} 
+ {\rm tr}(\bm{\Sigma}\bm{C})\bm{\Sigma} \right) 
\bm{A} \bm{\Sigma} \bm{B} \right) 
+ \sum_{i=1}^{a} \sum_{j\neq i} 
{\rm tr}\left( \bm{\Sigma} \bm{A} 
\left( 
\bm{\Sigma}\bm{C}\bm{\Sigma} + \bm{\Sigma}\bm{C}'\bm{\Sigma} 
+ {\rm tr}(\bm{\Sigma}\bm{C})\bm{\Sigma} \right) 
\bm{B} \right)  \\
&& + \sum_{i=1}^{a} \sum_{j\neq i} \sum_{k\neq i,j} 
{\rm tr}\left( \bm{\Sigma} \bm{A} \bm{\Sigma} \bm{B} \right) 
{\rm tr}\left( \bm{\Sigma} \bm{C} \right) 
\\
&=& O(ap^3) + O(a^2p^3) + O(a^2p^2) + O(a^2p^2) + O(a^3p^2) \\
&=& O(a^2p^3).
\end{eqnarray*}

The last assertion follows from
\begin{eqnarray*}
&& {\rm E} \left[ {\rm tr}\left( \bm{T} \bm{A} \right) {\rm tr} \left( \bm{T} \bm{B} \right) {\rm tr}\left( \bm{T} \bm{C} \right) \right] \\
&=& {\rm E} \left[ {\rm tr}\left( \left( \sum_{i=1}^{a}\bm{z}_i\bm{z}_i' \right) \bm{A} \right) 
{\rm tr}\left( \left( \sum_{j=1}^{a}\bm{z}_j\bm{z}_j' \right) \bm{B} \right) 
{\rm tr}\left( \left( \sum_{k=1}^{a}\bm{z}_k\bm{z}_k' \right) \bm{C} \right) \right] \\
&=& {\rm E} \left[ \sum_{i=1}^{a} \sum_{j=1}^{a} \sum_{k=1}^{a} 
{\rm tr}\left( \bm{z}_i\bm{z}_i' \bm{A} \right) 
{\rm tr}\left( \bm{z}_j\bm{z}_j' \bm{B} \right) 
{\rm tr}\left( \bm{z}_k\bm{z}_k' \bm{C} \right) \right] \\
&=& {\rm E} \left[ \sum_{i=1}^{a} 
{\rm tr}\left( \bm{z}_i\bm{z}_i' \bm{A} \right) {\rm tr}\left( \bm{z}_i\bm{z}_i' \bm{B} \right) 
{\rm tr}\left( \bm{z}_i\bm{z}_i' \bm{C} \right) \right] 
+ {\rm E} \left[ \sum_{i=1}^{a} \sum_{k\neq i} 
{\rm tr}\left( \bm{z}_i\bm{z}_i' \bm{A} \right) {\rm tr}\left( \bm{z}_i\bm{z}_i' \bm{B} \right) 
{\rm tr}\left( \bm{z}_k\bm{z}_k' \bm{C} \right) \right] \\
&& + {\rm E} \left[ \sum_{i=1}^{a} \sum_{j\neq i}
{\rm tr}\left( \bm{z}_i\bm{z}_i' \bm{A} \right) {\rm tr}\left( \bm{z}_j\bm{z}_j' \bm{B} \right) 
{\rm tr}\left( \bm{z}_i\bm{z}_i' \bm{C} \right) \right] 
+ {\rm E} \left[ \sum_{i=1}^{a} \sum_{j\neq i} 
{\rm tr}\left( \bm{z}_i\bm{z}_i' \bm{A} \right) {\rm tr}\left( \bm{z}_j\bm{z}_j' \bm{B} \right) 
{\rm tr}\left( \bm{z}_j\bm{z}_j' \bm{C} \right) \right] \\
&& + {\rm E} \left[ \sum_{i=1}^{a} \sum_{j\neq i} \sum_{k\neq i,j} 
{\rm tr}\left( \bm{z}_i\bm{z}_i' \bm{A} \right) {\rm tr}\left( \bm{z}_j\bm{z}_j' \bm{B} \right) 
{\rm tr}\left( \bm{z}_k\bm{z}_k' \bm{C} \right) \right] 
 \\
&=& {\rm E} \left[ \sum_{i=1}^{a} (\bm{z}_i' \bm{A} \bm{z}_i) (\bm{z}_i' \bm{B} \bm{z}_i)
( \bm{z}_i' \bm{C} \bm{z}_i ) \right] 
+ \sum_{i=1}^{a} \sum_{k\neq i} 
{\rm tr}\left( \left( 
\bm{\Sigma}\bm{A}\bm{\Sigma} + \bm{\Sigma}\bm{A}'\bm{\Sigma} 
+ {\rm tr}(\bm{\Sigma}\bm{A})\bm{\Sigma} \right) 
\bm{B} \right) 
{\rm tr}\left( \bm{\Sigma} \bm{C} \right) \\
&& + \sum_{i=1}^{a} \sum_{j\neq i}
{\rm tr}\left( \left( 
\bm{\Sigma}\bm{A}\bm{\Sigma} + \bm{\Sigma}\bm{A}'\bm{\Sigma} 
+ {\rm tr}(\bm{\Sigma}\bm{A})\bm{\Sigma} \right) 
\bm{C} \right) 
{\rm tr}\left( \bm{\Sigma} \bm{B} \right) 
+ \sum_{i=1}^{a} \sum_{j\neq i} 
{\rm tr}\left( \left( 
\bm{\Sigma}\bm{B}\bm{\Sigma} + \bm{\Sigma}\bm{B}'\bm{\Sigma} 
+ {\rm tr}(\bm{\Sigma}\bm{B})\bm{\Sigma} \right) 
\bm{C} \right) 
{\rm tr}\left( \bm{\Sigma} \bm{A} \right) \\
&& + \sum_{i=1}^{a} \sum_{j\neq i} \sum_{k\neq i,j} 
{\rm tr}\left( \bm{\Sigma} \bm{A} \right) 
{\rm tr}\left( \bm{\Sigma} \bm{B} \right) 
{\rm tr}\left( \bm{\Sigma} \bm{C} \right) 
\\
&=& O(ap^3) + O(a^2p^3) + O(a^2p^3) + O(a^2p^3)+ O(a^3p^3) \\
&=& O(a^3p^3).
\end{eqnarray*}
This completes the proof.
\qed

\begin{prop}\label{prop46}
If $\bm{T} \sim W_p(a,\bm{\Sigma})$ then
\begin{eqnarray*}
&& {\rm E} \left[ {\rm tr}\left( \bm{T} \bm{A} \bm{T} \bm{B} \bm{T} \bm{C} \bm{T} \bm{D} \right) \right] = O(ap^4), \\
&& {\rm E} \left[ {\rm tr}\left( \bm{T} \bm{A} \bm{T} \bm{B} \bm{T} \bm{C} \right) 
{\rm tr}\left( \bm{T} \bm{D} \right) \right] = O(a^2p^4), \\
&& {\rm E} \left[ {\rm tr}\left( \bm{T} \bm{A} \bm{T} \bm{B} \right) 
{\rm tr}\left( \bm{T} \bm{C} \bm{T} \bm{D} \right) \right] = O(a^2p^4), \\
&& {\rm E} \left[ {\rm tr}\left( \bm{T} \bm{A} \bm{T} \bm{B} \right) 
{\rm tr}\left( \bm{T} \bm{C} \right) 
{\rm tr}\left( \bm{T} \bm{D} \right) \right] = O(a^3p^4), \\
&& {\rm E} \left[ {\rm tr}\left( \bm{T} \bm{A} \right) 
{\rm tr}\left( \bm{T} \bm{B} \right) 
{\rm tr}\left( \bm{T} \bm{C} \right) 
{\rm tr}\left( \bm{T} \bm{D} \right) \right] = O(a^4p^4)
\end{eqnarray*}
as $p \to \infty$ with \eqref{ar3} for $p\times p $ matrices $\bm{A}$, $\bm{B}$, $\bm{C}$, and $\bm{D}$ which satisfy 
\[ {\rm tr}(\bm{\Sigma} \bm{\Psi}_1 \bm{\Sigma} \bm{\Psi}_2 \bm{\Sigma} \bm{\Psi}_3 \bm{\Sigma} \bm{\Psi}_4) =O(p), \ 
\bm{\Psi}_1,\bm{\Psi}_2,\bm{\Psi}_3,\bm{\Psi}_4 = \bm{A},\bm{A}',\bm{B},\bm{B}',\bm{C},\bm{C}',\bm{D},\bm{D}' \]
\[ {\rm tr}(\bm{\Sigma} \bm{\Psi}_1 \bm{\Sigma} \bm{\Psi}_2 \bm{\Sigma} \bm{\Psi}_3) =O(p), \ 
\bm{\Psi}_1,\bm{\Psi}_2,\bm{\Psi}_3 = \bm{A},\bm{A}',\bm{B},\bm{B}',\bm{C},\bm{C}',\bm{D},\bm{D}' \]
\[ {\rm tr}(\bm{\Sigma} \bm{\Psi}_1 \bm{\Sigma} \bm{\Psi}_2) =O(p), \ 
\bm{\Psi}_1,\bm{\Psi}_2 = \bm{A},\bm{A}',\bm{B},\bm{B}',\bm{C},\bm{C}',\bm{D},\bm{D}' \]
\[ {\rm tr}(\bm{\Sigma} \bm{\Psi}_1) =O(p), \ 
\bm{\Psi}_1 = \bm{A},\bm{B},\bm{C},\bm{D} \]
\end{prop}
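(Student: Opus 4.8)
\textit{Proof proposal.} The plan is to follow the scheme used for Propositions~\ref{prop44} and~\ref{prop45}. Write $\bm{T}\stackrel{d}{=}\sum_{i=1}^{a}\bm{z}_i\bm{z}_i'$ with $\bm{z}_1,\dots,\bm{z}_a\sim N_p(\bm{0},\bm{\Sigma})$ iid. For each of the five expectations, substitute this representation (one summation index for each occurrence of $\bm{T}$), interchange expectation with the finite sums, and split the sum according to the pattern of equalities among the indices; since the $\bm{z}_i$ are independent across distinct values, the expectation of each such class factorizes over the blocks of equal indices. Inside a block one collapses the repeated factors $\bm{z}_i\bm{z}_i'$ by iterating the identity ${\rm E}[(\bm{z}'\bm{A}\bm{z})\bm{z}\bm{z}']=\bm{\Sigma}\bm{A}\bm{\Sigma}+\bm{\Sigma}\bm{A}'\bm{\Sigma}+{\rm tr}(\bm{\Sigma}\bm{A})\bm{\Sigma}$ recalled before the proposition; a block that leaves only scalars (a single repeated index carrying two, three, or four quadratic forms) is bounded by the Cauchy--Schwarz or H\"older inequality, which reduces it---through Lemma~\ref{lem41} and $2\bm{z}'\bm{A}\bm{z}=\bm{z}'(\bm{A}+\bm{A}')\bm{z}$---to fourth moments of single quadratic forms.

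After these reductions a class with $s$ blocks is a bounded multiple of $a(a-1)\cdots(a-s+1)=O(a^{s})$ times a product of traces, each of which is an alternating product of at most four copies of $\bm{\Sigma}$ with matrices from $\{\bm{A},\bm{A}',\bm{B},\bm{B}',\bm{C},\bm{C}',\bm{D},\bm{D}'\}$ and hence $O(p)$ by the hypotheses; so the class is $O(a^{s}p^{t})$ for the relevant exponent $t\le N$, where $N=4$ is the total number of copies of $\bm{T}$. The observation that organizes the estimate is $a^{s}p^{t}=o(a\,p^{t+s-1})$ for $s\ge2$, which holds precisely because $\delta<1$ in \eqref{ar3}: checking exponents class by class, the dominant contribution to a product of $q$ traces is the class in which the indices internal to each individual trace are all merged while the $q$ resulting indices are mutually distinct, contributing $O(a^{q}p^{N})$, and every other class is strictly smaller. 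With $q=1,2,2,3,4$ in the five statements this is exactly the claimed order.

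The genuine difficulty is organizational rather than conceptual: for each identity there are fifteen equality patterns among the four indices to dispose of, and in the partially merged classes one must keep the collapsed expressions written through traces with at most four $\bm{\Sigma}$-factors---choosing the Cauchy--Schwarz groupings so that, for instance, a factor of the form $\bm{z}'(\bm{C}\bm{\Sigma}\bm{D})\bm{z}$ is squared rather than cubed---so that the listed hypotheses indeed apply; this is precisely why the hypothesis list terminates at traces with four $\bm{\Sigma}$'s. No analytic input beyond Lemma~\ref{lem41} and the Gaussian identity above is needed. In the write-up I would present ${\rm E}[{\rm tr}(\bm{T}\bm{A}\bm{T}\bm{B}\bm{T}\bm{C}\bm{T}\bm{D})]$ in full, grouped by the number of distinct indices exactly as in the proof of Proposition~\ref{prop45}, and then note that the other four estimates follow by the same computation.
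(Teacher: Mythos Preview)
Your proposal is correct and follows essentially the same route as the paper: represent $\bm{T}=\sum_i\bm{z}_i\bm{z}_i'$, split the fourfold sum by the fifteen equality patterns among the indices, collapse blocks using the identity ${\rm E}[\bm{z}\bm{z}'\bm{A}\bm{z}\bm{z}']=\bm{\Sigma}\bm{A}\bm{\Sigma}+\bm{\Sigma}\bm{A}'\bm{\Sigma}+{\rm tr}(\bm{\Sigma}\bm{A})\bm{\Sigma}$, bound the fully merged scalar block by Cauchy--Schwarz together with Lemma~\ref{lem41}, and compare the orders class by class; the paper carries this out explicitly for ${\rm E}[{\rm tr}(\bm{T}\bm{A}\bm{T}\bm{B}\bm{T}\bm{C}\bm{T}\bm{D})]$ and then states the remaining four orders with the remark that they follow in the same way. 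Your additional organizing remark---that under $a\asymp p^{\delta}$ with $\delta<1$ the dominant pattern for a product of $q$ traces is the one with indices merged inside each trace but distinct across traces---is a helpful heuristic, but note that it is not a substitute for the class-by-class check (which you acknowledge): the interleaved pattern $(i,j,i,j)$ in the single-trace case, for instance, does not reduce in one step and requires the extra pass the paper performs, and the care you mention about keeping at most four $\bm{\Sigma}$'s per trace is exactly what makes the stated hypotheses sufficient.
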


\textit{Proof.}
First, we see that
\begin{eqnarray*}
\lefteqn{ 
\sum_{i=1}^{a} {\rm E} \left[ \left( \bm{z}_i' \bm{A} \bm{z}_i \right) 
\left( \bm{z}_i' \bm{B} \bm{z}_i \right) \left( \bm{z}_i' \bm{C}\bm{z}_i \right) \left( \bm{z}_i' \bm{D}\bm{z}_i \right) \right] 
= a {\rm E} \left[ \left( \bm{z}_1' \bm{A} \bm{z}_1 \right) 
\left( \bm{z}_1' \bm{B} \bm{z}_1 \right) \left( \bm{z}_1' \bm{C}\bm{z}_1 \right) \left( \bm{z}_1' \bm{D}\bm{z}_1 \right) \right] } \\
& \le & a \left( 
{\rm E} \left[ \left( \bm{z}_1' \bm{A} \bm{z}_1 \right)^2 
\left( \bm{z}_1' \bm{B} \bm{z}_1 \right)^2 \right] 
{\rm E} \left[ \left( \bm{z}_1' \bm{C} \bm{z}_1 \right)^2 
\left( \bm{z}_1' \bm{D} \bm{z}_1 \right)^2 \right] 
\right)^{\frac{1}{2}} \\
& \le & a \left( 
{\rm E} \left[ \left( \bm{z}_1' \bm{A} \bm{z}_1 \right)^4 \right] 
{\rm E} \left[ \left( \bm{z}_1' \bm{B} \bm{z}_1 \right)^4 \right] 
{\rm E} \left[ \left( \bm{z}_1' \bm{C} \bm{z}_1 \right)^4 \right]
{\rm E} \left[ \left( \bm{z}_1' \bm{D} \bm{z}_1 \right)^4 \right]
\right)^{\frac{1}{4}} \\
&=& O(ap^4).
\end{eqnarray*}
Then, we have 
\begin{eqnarray*}
\lefteqn{{\rm E} \left[ {\rm tr}\left( \bm{T} \bm{A} \bm{T} \bm{B} \bm{T} \bm{C} \bm{T} \bm{D} \right) \right] } \\
&=& {\rm tr}\left( {\rm E} \left[ \left( \sum_{i=1}^{a}\bm{z}_i\bm{z}_i' \right) \bm{A} 
\left( \sum_{j=1}^{a}\bm{z}_j\bm{z}_j' \right) \bm{B} \left( \sum_{k=1}^{a}\bm{z}_k\bm{z}_k' \right) \bm{C} 
\left( \sum_{l=1}^{a}\bm{z}_l\bm{z}_l' \right) \bm{D} \right] 
\right) \\
&=& {\rm tr}\left( {\rm E} \left[ \sum_{i=1}^{a}\sum_{j=1}^{a}\sum_{k=1}^{a}\sum_{l=1}^{a}
\bm{z}_i\bm{z}_i' \bm{A} \bm{z}_j\bm{z}_j' \bm{B} \bm{z}_k\bm{z}_k' \bm{C} \bm{z}_l\bm{z}_l' \bm{D} \right] \right) \\
&=& {\rm tr}\left( 
{\rm E} \left[ \sum_{i=1}^{a}\bm{z}_i\bm{z}_i' \bm{A} \bm{z}_i\bm{z}_i' \bm{B} \bm{z}_i\bm{z}_i' \bm{C} \bm{z}_i\bm{z}_i' \bm{D} \right] 
+ {\rm E} \left[ \sum_{i=1}^{a}\sum_{l\neq i}
\bm{z}_i\bm{z}_i' \bm{A} \bm{z}_i\bm{z}_i' \bm{B} \bm{z}_i\bm{z}_i' \bm{C} \bm{z}_l\bm{z}_l' \bm{D} \right] 
+ {\rm E} \left[ \sum_{i=1}^{a}\sum_{k\neq i}
\bm{z}_i\bm{z}_i' \bm{A} \bm{z}_i\bm{z}_i' \bm{B} \bm{z}_k\bm{z}_k' \bm{C} \bm{z}_i\bm{z}_i' \bm{D} \right] \right. \\
&& + {\rm E} \left[ \sum_{i=1}^{a}\sum_{j\neq i}
\bm{z}_i\bm{z}_i' \bm{A} \bm{z}_j\bm{z}_j' \bm{B} \bm{z}_i\bm{z}_i' \bm{C} \bm{z}_i\bm{z}_i' \bm{D} \right] 
+ {\rm E} \left[ \sum_{i=1}^{a}\sum_{j\neq i}
\bm{z}_i\bm{z}_i' \bm{A} \bm{z}_j\bm{z}_j' \bm{B} \bm{z}_j\bm{z}_j' \bm{C} \bm{z}_j\bm{z}_j' \bm{D} \right] 
+ {\rm E} \left[ \sum_{i=1}^{a}\sum_{k\neq i}
\bm{z}_i\bm{z}_i' \bm{A} \bm{z}_i\bm{z}_i' \bm{B} \bm{z}_k\bm{z}_k' \bm{C} \bm{z}_k\bm{z}_k' \bm{D} \right] \\
&& + {\rm E} \left[ \sum_{i=1}^{a}\sum_{j\neq i}
\bm{z}_i\bm{z}_i' \bm{A} \bm{z}_j\bm{z}_j' \bm{B} \bm{z}_i\bm{z}_i' \bm{C} \bm{z}_j\bm{z}_j' \bm{D} \right] 
+ {\rm E} \left[ \sum_{i=1}^{a}\sum_{j\neq i}
\bm{z}_i\bm{z}_i' \bm{A} \bm{z}_j\bm{z}_j' \bm{B} \bm{z}_j\bm{z}_j' \bm{C} \bm{z}_i\bm{z}_i' \bm{D} \right] \\
&& + {\rm E} \left[ \sum_{i=1}^{a}\sum_{k\neq i}\sum_{l\neq i,k}
\bm{z}_i\bm{z}_i' \bm{A} \bm{z}_i\bm{z}_i' \bm{B} \bm{z}_k\bm{z}_k' \bm{C} \bm{z}_l\bm{z}_l' \bm{D} \right] 
+ {\rm E} \left[ \sum_{i=1}^{a}\sum_{j\neq i}\sum_{l\neq i,j}
\bm{z}_i\bm{z}_i' \bm{A} \bm{z}_j\bm{z}_j' \bm{B} \bm{z}_i\bm{z}_i' \bm{C} \bm{z}_l\bm{z}_l' \bm{D} \right] \\
&& + {\rm E} \left[ \sum_{i=1}^{a}\sum_{j\neq i}\sum_{k\neq i,j}
\bm{z}_i\bm{z}_i' \bm{A} \bm{z}_j\bm{z}_j' \bm{B} \bm{z}_k\bm{z}_k' \bm{C} \bm{z}_i\bm{z}_i' \bm{D} \right] 
+ {\rm E} \left[ \sum_{i=1}^{a}\sum_{j\neq i}\sum_{l\neq i,j}
\bm{z}_i\bm{z}_i' \bm{A} \bm{z}_j\bm{z}_j' \bm{B} \bm{z}_j\bm{z}_j' \bm{C} \bm{z}_l\bm{z}_l' \bm{D} \right] \\
&& + {\rm E} \left[ \sum_{i=1}^{a}\sum_{j\neq i}\sum_{k\neq i,j}
\bm{z}_i\bm{z}_i' \bm{A} \bm{z}_j\bm{z}_j' \bm{B} \bm{z}_k\bm{z}_k' \bm{C} \bm{z}_j\bm{z}_j' \bm{D} \right] 
+ {\rm E} \left[ \sum_{i=1}^{a}\sum_{j\neq i}\sum_{k\neq i,j}
\bm{z}_i\bm{z}_i' \bm{A} \bm{z}_j\bm{z}_j' \bm{B} \bm{z}_k\bm{z}_k' \bm{C} \bm{z}_k\bm{z}_k' \bm{D} \right] \\
&& \left. + {\rm E} \left[ \sum_{i=1}^{a}\sum_{j\neq i}\sum_{k\neq i,j}\sum_{l\neq i,j,k} 
\bm{z}_i\bm{z}_i' \bm{A} \bm{z}_j\bm{z}_j' \bm{B} \bm{z}_k\bm{z}_k' \bm{C} \bm{z}_l\bm{z}_l' \bm{D} \right] \right) \\
& = & 
{\rm E} \left[ \sum_{i=1}^{a}(\bm{z}_i' \bm{A} \bm{z}_i)(\bm{z}_i' \bm{B} \bm{z}_i)(\bm{z}_i' \bm{C} \bm{z}_i)(\bm{z}_i' \bm{D}\bm{z}_i) \right] 
+ {\rm E} \left[ \sum_{i=1}^{a}\sum_{l\neq i}
(\bm{z}_i' \bm{A} \bm{z}_i)(\bm{z}_i' \bm{B} \bm{z}_i)(\bm{z}_i' \bm{C} \bm{\Sigma} \bm{D} \bm{z}_i) \right] \\
&& + {\rm E} \left[ \sum_{i=1}^{a}\sum_{k\neq i}
(\bm{z}_i' \bm{A} \bm{z}_i)(\bm{z}_i' \bm{B} \bm{\Sigma} \bm{C} \bm{z}_i)(\bm{z}_i' \bm{D}\bm{z}_i) \right] 
+ {\rm E} \left[ \sum_{i=1}^{a}\sum_{j\neq i}
(\bm{z}_i' \bm{A} \bm{\Sigma} \bm{B} \bm{z}_i)(\bm{z}_i' \bm{C} \bm{z}_i)(\bm{z}_i' \bm{D} \bm{z}_i) \right] \\
&& + {\rm E} \left[ \sum_{i=1}^{a}\sum_{j\neq i}
(\bm{z}_j' \bm{D} \bm{\Sigma} \bm{A} \bm{z}_j)(\bm{z}_j' \bm{B} \bm{z}_j)(\bm{z}_j' \bm{C} \bm{z}_j) \right] \\
&& + {\rm E} \left[ \sum_{i=1}^{a}\sum_{k\neq i}
{\rm tr} \left\{ (\bm{\Sigma}\bm{A}\bm{\Sigma}\bm{B}+\bm{\Sigma}\bm{A}'\bm{\Sigma}\bm{B}+{\rm tr}(\bm{\Sigma}\bm{A})\bm{\Sigma}\bm{B})
(\bm{\Sigma}\bm{C}\bm{\Sigma}\bm{D}+\bm{\Sigma}\bm{C}'\bm{\Sigma}\bm{D}+{\rm tr}(\bm{\Sigma}\bm{C})\bm{\Sigma}\bm{D})
\right\} 
\right] \\
&& + {\rm E} \left[ \sum_{i=1}^{a}\sum_{j\neq i} 
{\rm tr} \left\{ 
(\bm{\Sigma}\bm{A} \bm{z}_j\bm{z}_j' \bm{B}\bm{\Sigma}\bm{C}+\bm{\Sigma}\bm{B}' \bm{z}_j\bm{z}_j' \bm{A}'\bm{\Sigma}\bm{C}+{\rm tr}(\bm{\Sigma}\bm{A} \bm{z}_j\bm{z}_j' \bm{B})\bm{\Sigma}\bm{C})
\bm{z}_j\bm{z}_j' \bm{D} \right\} \right] \\
&& + {\rm E} \left[ \sum_{i=1}^{a}\sum_{j\neq i} 
{\rm tr} \left\{ (\bm{\Sigma}\bm{D}\bm{\Sigma}\bm{A}+\bm{\Sigma}\bm{D}'\bm{\Sigma}\bm{A}+{\rm tr}(\bm{\Sigma}\bm{D})\bm{\Sigma}\bm{A})
(\bm{\Sigma}\bm{B}\bm{\Sigma}\bm{C}+\bm{\Sigma}\bm{B}'\bm{\Sigma}\bm{C}+{\rm tr}(\bm{\Sigma}\bm{B})\bm{\Sigma}\bm{C})
\right\} \right] \\
&& + {\rm E} \left[ \sum_{i=1}^{a}\sum_{k\neq i}\sum_{l\neq i,k}
\bm{z}_i' \bm{A} \bm{z}_i\bm{z}_i' \bm{B} \bm{\Sigma} \bm{C} \bm{\Sigma} \bm{D} \bm{z}_i \right] 
+ {\rm E} \left[ \sum_{i=1}^{a}\sum_{j\neq i}\sum_{l\neq i,j}
\bm{z}_i' \bm{A} \bm{\Sigma} \bm{B} \bm{z}_i\bm{z}_i' \bm{C} \bm{\Sigma} \bm{D} \bm{z}_i \right] \\
&& + {\rm E} \left[ \sum_{i=1}^{a}\sum_{j\neq i}\sum_{k\neq i,j}
\bm{z}_i' \bm{A} \bm{\Sigma} \bm{B} \bm{\Sigma} \bm{C} \bm{z}_i\bm{z}_i' \bm{D} \bm{z}_i \right] 
+ {\rm E} \left[ \sum_{i=1}^{a}\sum_{j\neq i}\sum_{l\neq i,j}
\bm{z}_j' \bm{B} \bm{z}_j\bm{z}_j' \bm{C} \bm{\Sigma} \bm{D} \bm{\Sigma} \bm{A} \bm{z}_j \right] \\
&& + {\rm E} \left[ \sum_{i=1}^{a}\sum_{j\neq i}\sum_{k\neq i,j}
\bm{z}_j' \bm{B} \bm{\Sigma} \bm{C} \bm{z}_j\bm{z}_j' \bm{D} \bm{\Sigma} \bm{A} \bm{z}_j \right] 
+ {\rm E} \left[ \sum_{i=1}^{a}\sum_{j\neq i}\sum_{k\neq i,j}
\bm{z}_k' \bm{C} \bm{z}_k\bm{z}_k' \bm{D} \bm{\Sigma} \bm{A} \bm{\Sigma} \bm{B} \bm{z}_k \right] \\
&& + {\rm E} \left[ \sum_{i=1}^{a}\sum_{j\neq i}\sum_{k\neq i,j}\sum_{l\neq i,j,k} 
{\rm tr} \left\{ \bm{\Sigma} \bm{A} \bm{\Sigma} \bm{B} \bm{\Sigma} \bm{C} \bm{\Sigma} \bm{D} \right\} \right] \\
& = & O(ap^4) + \left\{ O(a^2p^3) + O(a^2p^3) + O(a^2p^3) + O(a^2p^3) \right\} \\
&& + \left\{ O(a^2p^3) + {\rm E} \left[ \sum_{i=1}^{a}\sum_{j\neq i} 
{\rm tr} \left\{ 
\bm{\Sigma}\bm{A} \bm{z}_j' \bm{B}\bm{\Sigma}\bm{C}\bm{z}_j\bm{z}_j\bm{z}_j' \bm{D}
+\bm{\Sigma}\bm{B}' \bm{z}_j' \bm{A}'\bm{\Sigma}\bm{C}\bm{z}_j \bm{z}_j\bm{z}_j' \bm{D}
+ \bm{\Sigma}\bm{C}\bm{z}_j' \bm{B}\bm{\Sigma}\bm{A} \bm{z}_j\bm{z}_j\bm{z}_j' \bm{D} \right\} \right] 
+ O(a^2p^3) \right\} \\
&& + \left\{ O(a^3p^2) + O(a^3p^2) + O(a^3p^2) + O(a^3p^2) + O(a^3p^2) + O(a^3p^2) \right\} 
+ O(a^4p) \\
&=& O(ap^4) + \left\{ O(a^2p^3) + O(a^2p^3) + O(a^2p^3) + O(a^2p^3) \right\} 
+ \left\{ O(a^2p^3) + O(a^2p^2) + O(a^2p^3) \right\} \\
&& + \left\{ O(a^3p^2) + O(a^3p^2) + O(a^3p^2) + O(a^3p^2) + O(a^3p^2) + O(a^3p^2) \right\} 
+ O(a^4p) \\
&=& O(ap^4) .
\end{eqnarray*}
We can prove the followings in similar ways:
\begin{eqnarray*}
&& {\rm E} \left[ {\rm tr}\left( \bm{T} \bm{A} \bm{T} \bm{B} \bm{T} \bm{C} \right) 
{\rm tr}\left( \bm{T} \bm{D} \right) \right] 
= O(a^4p^2) + O(a^3p^3) + O(a^2p^4) = O(a^2p^4), \\
&& {\rm E} \left[ {\rm tr}\left( \bm{T} \bm{A} \bm{T} \bm{B} \right) 
{\rm tr}\left( \bm{T} \bm{C} \bm{T} \bm{D} \right) \right] 
= O(a^4p^2) + O(a^3p^3) + O(a^2p^4) = O(a^2p^4), \\
&& {\rm E} \left[ {\rm tr}\left( \bm{T} \bm{A} \bm{T} \bm{B} \right) 
{\rm tr}\left( \bm{T} \bm{C} \right) 
{\rm tr}\left( \bm{T} \bm{D} \right) \right] 
= O(a^4p^3) + O(a^3p^4) = O(a^3p^4), \\
&& {\rm E} \left[ {\rm tr}\left( \bm{T} \bm{A} \right) 
{\rm tr}\left( \bm{T} \bm{B} \right) 
{\rm tr}\left( \bm{T} \bm{C} \right) 
{\rm tr}\left( \bm{T} \bm{D} \right) \right] = O(a^4p^4),
\end{eqnarray*}
so we omit their details.
This completes the proof.
\qed

When applying Propositions~\ref{prop44}, \ref{prop45}, and \ref{prop46}, their conditions are not useful.
Hence, let us show a simple sufficient condition.

\begin{ass}\label{ass3}
As $p\to\infty$ with \eqref{ar3}, it holds that
\begin{eqnarray*}
&& \frac{{\rm tr}(\bm{\Sigma}^2 )}{p} \to \sigma_{2}\in (0,\infty), \\
&& \frac{{\rm tr}(\bm{\Sigma}^3)}{p} \to \sigma_{3}\in (0,\infty), \\
&& \qquad \vdots \\
&& \frac{{\rm tr}(\bm{\Sigma}^{16})}{p} \to \sigma_{16}\in (0,\infty). 
\end{eqnarray*}
\end{ass}

The following statement is a direct corollary to Propositions~\ref{prop44}, \ref{prop45}, and \ref{prop46}.

\begin{cor}
Suppose Assumption~\ref{ass3}.
If $\bm{T} \sim W_p(a,\bm{\Sigma})$ then the conclusions of Propositions~\ref{prop44}, \ref{prop45}, and \ref{prop46} hold for $\bm{A},\bm{B},\bm{C},\bm{D} = \bm{I}_p, \bm{\Sigma} $.
\end{cor}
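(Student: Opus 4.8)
The plan is to treat the corollary as a pure verification: since Propositions~\ref{prop44}, \ref{prop45}, and \ref{prop46} already give the desired conclusions once their hypotheses are met, it suffices to check that every $O(p)$ condition imposed there holds when $\bm{A},\bm{B},\bm{C},\bm{D}$ each range over $\{\bm{I}_p,\bm{\Sigma}\}$. First I would exploit the fact that $\bm{I}_p$ and $\bm{\Sigma}$ are symmetric, so that each transpose $\bm{A}',\ldots,\bm{D}'$ occurring in those hypotheses equals the matrix itself; consequently every trace quantity appearing in the hypotheses collapses to ${\rm tr}(\bm{\Sigma}^{k})$ for some positive integer $k$. Indeed, any product $\bm{\Sigma}\bm{\Psi}_1\bm{\Sigma}\bm{\Psi}_2\cdots\bm{\Sigma}\bm{\Psi}_r$ with each $\bm{\Psi}_i\in\{\bm{I}_p,\bm{\Sigma}\}$ equals $\bm{\Sigma}^{\,r+\#\{i:\bm{\Psi}_i=\bm{\Sigma}\}}$, and ${\rm tr}((\bm{\Sigma}\bm{A})^4)$ equals ${\rm tr}(\bm{\Sigma}^4)$ for $\bm{A}=\bm{I}_p$ and ${\rm tr}(\bm{\Sigma}^8)$ for $\bm{A}=\bm{\Sigma}$.

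Next I would run through the hypothesis lists of the three propositions to record the range of exponents $k$ that can arise. The shortest terms are of the form ${\rm tr}(\bm{\Sigma}\bm{I}_p)={\rm tr}(\bm{\Sigma})$, giving $k=1$, and the longest occur in the degree-four condition of Proposition~\ref{prop46} with all four inner slots equal to $\bm{\Sigma}$ (equivalently ${\rm tr}((\bm{\Sigma}\bm{\Sigma})^4)$ in Proposition~\ref{prop45}), giving $k=8$; thus $1\le k\le 8$. I would then verify ${\rm tr}(\bm{\Sigma}^k)=O(p)$ for each such $k$: for $2\le k\le 8$ this is immediate from Assumption~\ref{ass3}, which provides ${\rm tr}(\bm{\Sigma}^k)/p\to\sigma_k\in(0,\infty)$ for all $k=2,\ldots,16$; and for $k=1$, which Assumption~\ref{ass3} does not cover directly, positive-definiteness of $\bm{\Sigma}$ together with the Cauchy--Schwarz inequality gives $\{{\rm tr}(\bm{\Sigma})\}^2\le p\,{\rm tr}(\bm{\Sigma}^2)=O(p^2)$, hence ${\rm tr}(\bm{\Sigma})=O(p)$. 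With all relevant trace quantities shown to be $O(p)$, the hypotheses of Propositions~\ref{prop44}, \ref{prop45}, and \ref{prop46} are satisfied for $\bm{A},\bm{B},\bm{C},\bm{D}\in\{\bm{I}_p,\bm{\Sigma}\}$, and their conclusions follow at once.

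I do not expect a genuine obstacle: the argument is bookkeeping. The only points needing a little care are (i) the exponent $k=1$, which lies outside the scope of Assumption~\ref{ass3} and must be handled by the separate Cauchy--Schwarz bound above, and (ii) making the enumeration of the hypothesis trace terms exhaustive, so that one can be certain the exponent never exceeds $8$ and Assumption~\ref{ass3} therefore suffices.
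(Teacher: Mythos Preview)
Your proposal is correct and matches the paper's approach: the paper offers no explicit proof, stating only that the result is ``a direct corollary'' to Propositions~\ref{prop44}, \ref{prop45}, and \ref{prop46}, which is precisely the hypothesis-verification you outline. Your treatment is in fact more careful than the paper's, since you explicitly handle the case ${\rm tr}(\bm{\Sigma})=O(p)$ (not directly covered by Assumption~\ref{ass3}) via Cauchy--Schwarz, a point the paper leaves implicit.
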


Next, we provide asymptotic evaluations for moments of two Wishart matrices.
Before presenting results, we introduce the following condition.

\begin{ass}\label{ass4}
As $p\to\infty$ with 
\begin{equation}\label{ar4}
a , b \asymp p^\delta, \quad 0<\delta<1,
\end{equation}
it holds that
\begin{eqnarray*}
&& \frac{{\rm tr}(\bm{\Sigma}_i \bm{\Sigma}_j)}{p} \to \sigma_{ij}\in (0,\infty), \\
&& \qquad \vdots \\
&& \frac{{\rm tr}(\bm{\Sigma}_i \bm{\Sigma}_j \bm{\Sigma}_k \bm{\Sigma}_l \bm{\Sigma}_{i'})}{p} 
\to \sigma_{ijkli'}\in (0,\infty), \\
&& \frac{{\rm tr}(\bm{\Sigma}_i \bm{\Sigma}_j \bm{\Sigma}_k \bm{\Sigma}_l 
\bm{\Sigma}_{i'} \bm{\Sigma}_{j'} )}{p} 
\to \sigma_{ijkli'j'}\in (-\infty,\infty), \\
&& \qquad \vdots \\
&& \frac{{\rm tr}(\bm{\Sigma}_i \bm{\Sigma}_j \bm{\Sigma}_k \bm{\Sigma}_l 
\bm{\Sigma}_{i'} \bm{\Sigma}_{j'} \bm{\Sigma}_{k'} \bm{\Sigma}_{l'}
\bm{\Sigma}_{i''} \bm{\Sigma}_{j''} \bm{\Sigma}_{k''} \bm{\Sigma}_{l''}
\bm{\Sigma}_{i'''} \bm{\Sigma}_{j'''} \bm{\Sigma}_{k'''} \bm{\Sigma}_{l'''})}{p} \\
&& \to \sigma_{ijkli'j'k'l'i''j''k''l''i'''j'''k'''l'''}\in (-\infty,\infty)
\end{eqnarray*}
for $i,j,k,l,i',j',k',l',i'',j'',k'',l'',i''',j''',k''',l'''=a,b$.
\end{ass}

\begin{prop}\label{prop48}
(i) Under Assumption~\ref{ass4}, if $\bm{T}_a\sim W_p(a,\bm{\Sigma}_a)$ and $\bm{T}_b\sim W_p(b,\bm{\Sigma}_b)$ are independent, then
\begin{eqnarray*}
&& {\rm E} \left[ {\rm tr}\left( \bm{T}_a  \bm{T}_a  \bm{T}_a \bm{T}_a 
\bm{T}_b  \bm{T}_b  \bm{T}_b  \bm{T}_b  \right) \right]  = O(abp^7), \\
&& {\rm E} \left[ {\rm tr}\left( \bm{T}_a  \bm{T}_a  \bm{T}_a  \bm{T}_b 
\bm{T}_a  \bm{T}_b  \bm{T}_b  \bm{T}_b  \right) \right]  = O(ab(a+b)p^6), \\
&& {\rm E} \left[ {\rm tr}\left( \bm{T}_a  \bm{T}_a  \bm{T}_b  \bm{T}_b 
\bm{T}_a  \bm{T}_a  \bm{T}_b  \bm{T}_b  \right) \right] = O(ab (a+b) p^6), \\
&& {\rm E} \left[ {\rm tr}\left( \bm{T}_a  \bm{T}_b  \bm{T}_a  \bm{T}_b 
\bm{T}_a  \bm{T}_b  \bm{T}_a  \bm{T}_b  \right) \right] = O(ab (a^3+b^3) p^4)
\end{eqnarray*}
as $p \to \infty$ with \eqref{ar4}.\\
(ii)  If $\bm{T}_a\sim W_p(a,\bm{\Sigma}_a)$ and $\bm{T}_b\sim W_p(b,\bm{\Sigma}_b)$ are independent, then
\begin{eqnarray*}
&& {\rm E} \left[ {\rm tr}\left( \bm{T}_a \bm{A} \bm{T}_a \bm{B} \bm{T}_a \bm{C} \bm{T}_a \bm{D} \right) 
{\rm tr}\left( \bm{T}_b \bm{E} \bm{T}_b \bm{F} \bm{T}_b \bm{G} \bm{T}_b \bm{H} \right) \right] 
=  O(abp^8) \\
&& {\rm E} \left[ {\rm tr}\left( \bm{T}_a \bm{A} \bm{T}_a \bm{B} \bm{T}_b \bm{C} \bm{T}_b \bm{D} \right) 
{\rm tr}\left( \bm{T}_a \bm{E} \bm{T}_a \bm{F} \bm{T}_b \bm{G} \bm{T}_b \bm{H} \right) \right] 
 =  O(a^2b^2p^6) \\
&& {\rm E} \left[ {\rm tr}\left( \bm{T}_a \bm{A} \bm{T}_b \bm{B} \bm{T}_a \bm{C} \bm{T}_b \bm{D} \right) 
{\rm tr}\left( \bm{T}_a \bm{E} \bm{T}_b \bm{F} \bm{T}_a \bm{G} \bm{T}_b \bm{H} \right) \right] 
=  O(a^2b^2 (a^2 + b^2)p^4)
\end{eqnarray*}
as $p \to \infty$ with \eqref{ar4} 
for $p\times p $ matrices $\bm{A}$, $\bm{B}$, $\bm{C}$, $\bm{D}$, $\bm{E}$, $\bm{F}$, $\bm{G}$, and $\bm{H}$ which satisfy
\begin{eqnarray*}
&& {\rm tr}\left( \bm{\Sigma}_1\bm{\Psi}_1\bm{\Sigma}_2\bm{\Psi}_2\bm{\Sigma}_3\bm{\Psi}_3\bm{\Sigma}_4\bm{\Psi}_4
\bm{\Sigma}_5\bm{\Psi}_5\bm{\Sigma}_6\bm{\Psi}_6\bm{\Sigma}_7\bm{\Psi}_7\bm{\Sigma}_8\bm{\Psi}_8 \right) =O(p),\\
&& \bm{\Sigma}_i=\bm{I}_p,\bm{\Sigma}_a,\bm{\Sigma}_b, \ (i=1,2,3,4,5,6,7,8), \\ 
&& \bm{\Psi}_i=\bm{I}_p,\bm{A},\bm{B},\bm{C},\bm{D},\bm{E},\bm{F},\bm{G},\bm{H}, \ (i=1,2,3,4,5,6,7,8) .
\end{eqnarray*}
In partilular, when Assumption~\ref{ass4} is additionaly supposed, the conclusions  hold for 
\[ \bm{A},\bm{B},\bm{C},\bm{D},\bm{E},\bm{F},\bm{G},\bm{H} 
= \bm{I}_p, \bm{\Sigma}_a, \bm{\Sigma}_b . \]
\end{prop}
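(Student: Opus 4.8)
The plan is to reduce every bound to the Gaussian identities already established. Write $\bm{T}_a \stackrel{d}{=} \sum_{i=1}^{a}\bm{z}_i\bm{z}_i'$ and $\bm{T}_b \stackrel{d}{=} \sum_{j=1}^{b}\bm{w}_j\bm{w}_j'$ with $\{\bm{z}_i\}$ and $\{\bm{w}_j\}$ independent families of iid vectors, $\bm{z}_i\sim N_p(\bm{0},\bm{\Sigma}_a)$, $\bm{w}_j\sim N_p(\bm{0},\bm{\Sigma}_b)$. Each quantity then expands into a finite sum over index tuples whose termwise expectation is computed from ${\rm E}[\bm{z}\bm{z}'\bm{A}\bm{z}\bm{z}'] = \bm{\Sigma}\bm{A}\bm{\Sigma}+\bm{\Sigma}\bm{A}'\bm{\Sigma}+{\rm tr}(\bm{\Sigma}\bm{A})\bm{\Sigma}$, its higher-order analogues from Lemma~\ref{lem41}, and the independence of the two families; every trace of a product of $\bm{\Sigma}$'s that arises has degree at most $16$ and is therefore an $O(p)$ quantity under Assumption~\ref{ass4} (using ${\rm tr}(\bm{\Sigma}_i)\le\sqrt{p\,{\rm tr}(\bm{\Sigma}_i^2)}$ for the single traces) or is among the hypotheses of part~(ii).

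For part~(ii) the first bound is immediate: $\bm{T}_a$ and $\bm{T}_b$ are independent and $\bm{A},\dots,\bm{H}$ deterministic, so the expectation factorizes, and each factor is $O(ap^4)$, resp.\ $O(bp^4)$, by Proposition~\ref{prop46}, whose hypotheses are implied by those imposed here. For the second and third bounds the two traces share both Wisharts; one conditions on $\bm{T}_b$, groups its occurrences into (conditionally fixed) matrices, and invokes the structure of ${\rm E}_{\bm{T}_a}[{\rm tr}(\bm{T}_a\bm{X}_1\bm{T}_a\bm{X}_2){\rm tr}(\bm{T}_a\bm{X}_3\bm{T}_a\bm{X}_4)]$ extracted from the proof of Proposition~\ref{prop46}: the conditional expectation is a finite sum of terms, each being $a^k$ with $1\le k\le4$ times a product of traces built from $\bm{\Sigma}_a$, $\bm{T}_b$ and the fixed matrices. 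Taking ${\rm E}_{\bm{T}_b}$ of each term with Propositions~\ref{prop44}--\ref{prop46} and collecting powers gives the claim; the factor $a^2+b^2$ in the third bound is $a^4b^2+a^2b^4$, the two leading contributions coming from conditioning on $\bm{T}_b$ and on $\bm{T}_a$ respectively.

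For part~(i) each quantity is a single trace of eight Wisharts (four of each type, intervening matrices all $\bm{I}_p$), and the same expansion applies. Classify the terms of the termwise expectation by the partition of the eight index slots into blocks on which the $\bm{z}$- or $\bm{w}$-index is constant: a class with $k_a$ distinct $\bm{z}$-indices and $k_b$ distinct $\bm{w}$-indices contributes $O(a^{k_a}b^{k_b})$ from the index sums and $O(p^{t})$ from the residual traces of $\bm{\Sigma}$-products. The combinatorial core is that $t$ is maximized and $k_a+k_b$ minimized when every maximal run of consecutive like-type Wisharts collapses to a single index---such a merge turning the interior of the run into free scalar traces ${\rm tr}(\bm{\Sigma}_a)$ or ${\rm tr}(\bm{\Sigma}_b)$---which for the four arrangements gives $O(abp^7)$, $O(ab(a+b)p^6)$, $O(ab(a+b)p^6)$, $O(ab(a^3+b^3)p^4)$; since $a,b\asymp p^{\delta}$ with $\delta<1$, a unit increase in an $a$- or $b$-exponent is strictly dominated by the corresponding unit decrease in the $p$-exponent, so no other class exceeds the extremal one. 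Equivalently, one may argue by iterated conditioning: e.g.\ the first arrangement follows from ${\rm E}[{\rm tr}(\bm{T}_a^4\bm{T}_b^4)]={\rm tr}({\rm E}[\bm{T}_a^4]\,{\rm E}[\bm{T}_b^4])$ together with the structural fact ${\rm E}[\bm{T}_a^4]=\sum_{k=1}^{4}a^k\bm{R}_k$, where $\bm{R}_k$ is a combination of terms ${\rm tr}(\bm{\Sigma}_a^{m_1})\cdots{\rm tr}(\bm{\Sigma}_a^{m_j})\bm{\Sigma}_a^{m_0}$ with $j\le4-k$, $m_0\ge1$ and $m_0+m_1+\cdots+m_j=4$; the interleaved arrangements are handled by conditioning as in part~(ii).

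The main obstacle is organizational rather than conceptual: the number of coincidence-partitions of an eightfold product is large, and for each one must verify that the resulting $a^{k_a}b^{k_b}p^{t}$ does not exceed the stated target. The estimates within a fixed class are routine consequences of Lemma~\ref{lem41} and Propositions~\ref{prop44}--\ref{prop46}; the real work is to structure the proof so that only the extremal configuration is computed in detail while every remaining class is dispatched by the inequality $\delta<1$. A minor additional point is to check that each $\bm{\Sigma}$-product trace that surfaces---including those carrying transposed entries---is covered, which holds because Assumption~\ref{ass4} (or the explicit hypothesis list of part~(ii)) supplies all such traces up to degree $16$, and because in the intended application the inserted matrices are the symmetric matrices $\bm{I}_p,\bm{\Sigma}_a,\bm{\Sigma}_b$.
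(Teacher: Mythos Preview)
Your proposal is correct and follows essentially the same route as the paper: expand each Wishart as a sum of rank-one projectors, classify the resulting terms by the coincidence pattern of the indices, evaluate each class via the Gaussian moment identities (Lemma~\ref{lem41} and its polarizations), and observe that under $a,b\asymp p^{\delta}$ with $\delta<1$ the dominant contribution comes from the configuration in which each maximal run of like-type factors collapses to a single index. The paper carries out this expansion explicitly for ${\rm E}[{\rm tr}(\bm{T}_a^{4}\bm{T}_b^{4})]$, listing all fifteen coincidence classes and identifying $a\,{\rm tr}(\bm{\Sigma}_a)^{3}\,{\rm E}[{\rm tr}(\bm{\Sigma}_a\bm{T}_b^{4})]=O(abp^{7})$ as the leading term, then declares the remaining bounds ``similar''; your conditioning argument for part~(ii) and your structural formula ${\rm E}[\bm{T}_a^{4}]=\sum_{k=1}^{4}a^{k}\bm{R}_k$ are a cleaner packaging of the same bookkeeping.
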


\textit{Proof.}
(i) To avoid showing quite long routine calculations, we simply give a sketch of the proof only for
\[ {\rm E} \left[ {\rm tr}\left( \bm{T}_a  \bm{T}_a  \bm{T}_a \bm{T}_a 
\bm{T}_b  \bm{T}_b  \bm{T}_b  \bm{T}_b  \right) \right]  = O(abp^7). \]
We have
\begin{eqnarray*}
&& {\rm E} \left[ {\rm tr}\left( \bm{T}_a  \bm{T}_a  \bm{T}_a \bm{T}_a 
\bm{T}_b  \bm{T}_b  \bm{T}_b  \bm{T}_b  \right) \right] \\
&=&  {\rm E} \left[ \sum_{i=1}^{a} \sum_{j=1}^{a} \sum_{k=1}^{a} \sum_{l=1}^{a} 
{\rm tr}\left( \bm{\Sigma}_{a}^{\frac{1}{2}} \bm{z}_i\bm{z}_i' 
\bm{\Sigma}_{a} \bm{z}_j\bm{z}_j' \bm{\Sigma}_{a} \bm{z}_k\bm{z}_k' 
\bm{\Sigma}_{a} \bm{z}_l\bm{z}_l' \bm{\Sigma}_{a}^{\frac{1}{2}}
\bm{T}_b  \bm{T}_b  \bm{T}_b  \bm{T}_b \right) \right] \\
&=&  
{\rm E} \left[ \sum_{i=1}^{a} 
\left( \bm{z}_i' \bm{\Sigma}_{a} \bm{z}_i \bm{z}_i' \bm{\Sigma}_{a} \bm{z}_i\bm{z}_i' 
\bm{\Sigma}_{a} \bm{z}_i\bm{z}_i' \bm{\Sigma}_{a}^{\frac{1}{2}}
\bm{T}_b  \bm{T}_b  \bm{T}_b  \bm{T}_b \bm{\Sigma}_{a}^{\frac{1}{2}} \bm{z}_i \right) \right.  \\
&& 
+ \sum_{i=1}^{a} \sum_{j\neq i} 
\left( \bm{z}_i' \bm{\Sigma}_{a} \bm{z}_i\bm{z}_i' \bm{\Sigma}_{a} \bm{z}_i\bm{z}_i' 
\bm{\Sigma}_{a} \bm{z}_j\bm{z}_j' \bm{\Sigma}_{a}^{\frac{1}{2}}
\bm{T}_b  \bm{T}_b  \bm{T}_b  \bm{T}_b \bm{\Sigma}_{a}^{\frac{1}{2}} \bm{z}_i \right) 
+ \sum_{i=1}^{a} \sum_{j\neq i}
\left( \bm{z}_i' \bm{\Sigma}_{a} \bm{z}_i\bm{z}_i' \bm{\Sigma}_{a} \bm{z}_j\bm{z}_j' 
\bm{\Sigma}_{a} \bm{z}_i\bm{z}_i' \bm{\Sigma}_{a}^{\frac{1}{2}}
\bm{T}_b  \bm{T}_b  \bm{T}_b  \bm{T}_b \bm{\Sigma}_{a}^{\frac{1}{2}} \bm{z}_i \right) \\
&& 
+ \sum_{i=1}^{a} \sum_{j\neq i}
\left( \bm{z}_i' 
\bm{\Sigma}_{a} \bm{z}_j\bm{z}_j' \bm{\Sigma}_{a} \bm{z}_i\bm{z}_i' 
\bm{\Sigma}_{a} \bm{z}_i\bm{z}_i' \bm{\Sigma}_{a}^{\frac{1}{2}}
\bm{T}_b  \bm{T}_b  \bm{T}_b  \bm{T}_b \bm{\Sigma}_{a}^{\frac{1}{2}} \bm{z}_i \right)
+ \sum_{i=1}^{a} \sum_{j\neq i}
\left( \bm{z}_i' \bm{\Sigma}_{a}^{\frac{1}{2}} \bm{T}_b  \bm{T}_b  \bm{T}_b  \bm{T}_b \bm{\Sigma}_{a}^{\frac{1}{2}} \bm{z}_j
\bm{z}_j' \bm{\Sigma}_{a} \bm{z}_i\bm{z}_i' \bm{\Sigma}_{a} \bm{z}_i\bm{z}_i' 
\bm{\Sigma}_{a} \bm{z}_i  \right) \\
&& + \sum_{i=1}^{a} \sum_{j\neq i}
\left( \bm{z}_i' 
\bm{\Sigma}_{a} \bm{z}_i\bm{z}_i' \bm{\Sigma}_{a} \bm{z}_j\bm{z}_j' 
\bm{\Sigma}_{a} \bm{z}_j\bm{z}_j' \bm{\Sigma}_{a}^{\frac{1}{2}}
\bm{T}_b  \bm{T}_b  \bm{T}_b  \bm{T}_b \bm{\Sigma}_{a}^{\frac{1}{2}} \bm{z}_i \right) 
+ \sum_{i=1}^{a} \sum_{j\neq i}
\left( \bm{z}_i' 
\bm{\Sigma}_{a} \bm{z}_j\bm{z}_j' \bm{\Sigma}_{a} \bm{z}_i\bm{z}_i' 
\bm{\Sigma}_{a} \bm{z}_j\bm{z}_j' \bm{\Sigma}_{a}^{\frac{1}{2}}
\bm{T}_b  \bm{T}_b  \bm{T}_b  \bm{T}_b \bm{\Sigma}_{a}^{\frac{1}{2}} \bm{z}_i \right) \\
&& + \sum_{i=1}^{a} \sum_{j\neq i}
\left( \bm{z}_i' 
\bm{\Sigma}_{a} \bm{z}_j\bm{z}_j' \bm{\Sigma}_{a} \bm{z}_j\bm{z}_j' 
\bm{\Sigma}_{a} \bm{z}_i\bm{z}_i' \bm{\Sigma}_{a}^{\frac{1}{2}}
\bm{T}_b  \bm{T}_b  \bm{T}_b  \bm{T}_b \bm{\Sigma}_{a}^{\frac{1}{2}} \bm{z}_i \right) \\
&& + \sum_{i=1}^{a} \sum_{j\neq i} \sum_{k\neq i,j}
\left( \bm{z}_i' 
\bm{\Sigma}_{a} \bm{z}_i\bm{z}_i' \bm{\Sigma}_{a} \bm{z}_j\bm{z}_j' 
\bm{\Sigma}_{a} \bm{z}_k\bm{z}_k' \bm{\Sigma}_{a}^{\frac{1}{2}}
\bm{T}_b  \bm{T}_b  \bm{T}_b  \bm{T}_b \bm{\Sigma}_{a}^{\frac{1}{2}} \bm{z}_i \right) 
+ \sum_{i=1}^{a} \sum_{j\neq i} \sum_{k\neq i,j}
\left( \bm{z}_i' 
\bm{\Sigma}_{a} \bm{z}_j\bm{z}_j' \bm{\Sigma}_{a} \bm{z}_i\bm{z}_i' 
\bm{\Sigma}_{a} \bm{z}_k\bm{z}_k' \bm{\Sigma}_{a}^{\frac{1}{2}}
\bm{T}_b  \bm{T}_b  \bm{T}_b  \bm{T}_b \bm{\Sigma}_{a}^{\frac{1}{2}} \bm{z}_i \right) \\
&& + \sum_{i=1}^{a} \sum_{j\neq i} \sum_{k\neq i,j}
\left( \bm{z}_i' 
\bm{\Sigma}_{a} \bm{z}_j\bm{z}_j' \bm{\Sigma}_{a} \bm{z}_k\bm{z}_k' 
\bm{\Sigma}_{a} \bm{z}_i\bm{z}_i' \bm{\Sigma}_{a}^{\frac{1}{2}}
\bm{T}_b  \bm{T}_b  \bm{T}_b  \bm{T}_b \bm{\Sigma}_{a}^{\frac{1}{2}} \bm{z}_i \right) 
+ \sum_{i=1}^{a} \sum_{j\neq i} \sum_{k\neq i,j}
\left( \bm{z}_j' \bm{\Sigma}_{a} \bm{z}_j\bm{z}_j' 
\bm{\Sigma}_{a} \bm{z}_k\bm{z}_k' \bm{\Sigma}_{a}^{\frac{1}{2}}
\bm{T}_b  \bm{T}_b  \bm{T}_b  \bm{T}_b \bm{\Sigma}_{a}^{\frac{1}{2}} \bm{z}_i\bm{z}_i' 
\bm{\Sigma}_{a} \bm{z}_j \right) \\
&& + \sum_{i=1}^{a} \sum_{j\neq i} \sum_{k\neq i,j}
\left( \bm{z}_j' \bm{\Sigma}_{a} \bm{z}_k\bm{z}_k' 
\bm{\Sigma}_{a} \bm{z}_j\bm{z}_j' \bm{\Sigma}_{a}^{\frac{1}{2}}
\bm{T}_b  \bm{T}_b  \bm{T}_b  \bm{T}_b \bm{\Sigma}_{a}^{\frac{1}{2}} \bm{z}_i\bm{z}_i' 
\bm{\Sigma}_{a} \bm{z}_j \right) 
+ \sum_{i=1}^{a} \sum_{j\neq i} \sum_{k\neq i,j}
\left( \bm{z}_k' \bm{\Sigma}_{a}^{\frac{1}{2}}
\bm{T}_b  \bm{T}_b  \bm{T}_b  \bm{T}_b \bm{\Sigma}_{a}^{\frac{1}{2}} \bm{z}_i\bm{z}_i' 
\bm{\Sigma}_{a} \bm{z}_j\bm{z}_j' \bm{\Sigma}_{a} \bm{z}_k\bm{z}_k' 
\bm{\Sigma}_{a} \bm{z}_k \right) \\
&& \left. 
+ \sum_{i=1}^{a} \sum_{j\neq i} \sum_{k\neq i,j} \sum_{l\neq i,j,k} 
\left( \bm{z}_i' 
\bm{\Sigma}_{a} \bm{z}_j\bm{z}_j' \bm{\Sigma}_{a} \bm{z}_k\bm{z}_k' 
\bm{\Sigma}_{a} \bm{z}_l\bm{z}_l' \bm{\Sigma}_{a}^{\frac{1}{2}}
\bm{T}_b  \bm{T}_b  \bm{T}_b  \bm{T}_b \bm{\Sigma}_{a}^{\frac{1}{2}} \bm{z}_i \right) 
\right]  \\
&\sim& 
{\rm E} \left[ \sum_{i=1}^{a} 
\left( {\rm tr}(\bm{\Sigma}_{a}) {\rm tr}(\bm{\Sigma}_{a}) {\rm tr}(\bm{\Sigma}_{a}) 
{\rm tr}(\bm{\Sigma}_{a}\bm{T}_b  \bm{T}_b  \bm{T}_b  \bm{T}_b) \right) \right. \\
&& 
+ \sum_{i=1}^{a} \sum_{j\neq i} 
\left( {\rm tr}(\bm{\Sigma}_{a}) {\rm tr}(\bm{\Sigma}_{a}) 
{\rm tr}( \bm{\Sigma}_{a}^{2}\bm{T}_b  \bm{T}_b  \bm{T}_b  \bm{T}_b) \right) 
+ \sum_{i=1}^{a} \sum_{j\neq i}
\left( {\rm tr}(\bm{\Sigma}_{a}) {\rm tr}(\bm{\Sigma}_{a}^{2}) 
{\rm tr}(\bm{\Sigma}_{a}\bm{T}_b  \bm{T}_b  \bm{T}_b  \bm{T}_b) \right) \\
&& 
+ \sum_{i=1}^{a} \sum_{j\neq i}
\left( {\rm tr}(\bm{\Sigma}_{a}^{2}) {\rm tr}(\bm{\Sigma}_{a}) 
{\rm tr}(\bm{\Sigma}_{a}\bm{T}_b  \bm{T}_b  \bm{T}_b  \bm{T}_b) \right)
+ \sum_{i=1}^{a} \sum_{j\neq i}
\left( {\rm tr}( \bm{\Sigma}_{a}^{2} \bm{T}_b  \bm{T}_b  \bm{T}_b  \bm{T}_b) 
{\rm tr}(\bm{\Sigma}_{a}) {\rm tr}(\bm{\Sigma}_{a}) \right) \\
&& + \sum_{i=1}^{a} \sum_{j\neq i}
\left( {\rm tr}(\bm{\Sigma}_{a})
{\rm tr}(\bm{\Sigma}_{a}^{2}\bm{T}_b  \bm{T}_b  \bm{T}_b  \bm{T}_b)  {\rm tr}(\bm{\Sigma}_{a}) \right) 
+ \sum_{i=1}^{a} \sum_{j\neq i}
\left( \bm{z}_i' 
\bm{\Sigma}_{a} 
\left( 2\bm{\Sigma}_{a} \bm{z}_i\bm{z}_i' \bm{\Sigma}_{a} + {\rm tr}(\bm{\Sigma}_{a} \bm{z}_i\bm{z}_i' \bm{\Sigma}_{a})\bm{I}_p \right) 
\bm{\Sigma}_{a}^{\frac{1}{2}}
\bm{T}_b  \bm{T}_b  \bm{T}_b  \bm{T}_b \bm{\Sigma}_{a}^{\frac{1}{2}} \bm{z}_i \right) \\
&& + \sum_{i=1}^{a} \sum_{j\neq i}
\left( {\rm tr}(\bm{\Sigma}_{a}^{2}) 
{\rm tr}(\bm{\Sigma}_{a}\bm{T}_b  \bm{T}_b  \bm{T}_b  \bm{T}_b) {\rm tr}(\bm{\Sigma}_{a}) \right) \\
&& + \sum_{i=1}^{a} \sum_{j\neq i} \sum_{k\neq i,j}
\left({\rm tr}(\bm{\Sigma}_{a}) {\rm tr}(\bm{\Sigma}_{a}^{3} \bm{T}_b  \bm{T}_b  \bm{T}_b  \bm{T}_b) \right) 
+ \sum_{i=1}^{a} \sum_{j\neq i} \sum_{k\neq i,j}
\left( {\rm tr}(\bm{\Sigma}_{a}^{2})
{\rm tr}(\bm{\Sigma}_{a}^{2} \bm{T}_b  \bm{T}_b  \bm{T}_b  \bm{T}_b) \right) \\
&& + \sum_{i=1}^{a} \sum_{j\neq i} \sum_{k\neq i,j}
\left( {\rm tr}(\bm{\Sigma}_{a}^{3}) 
{\rm tr}(\bm{\Sigma}_{a}\bm{T}_b  \bm{T}_b  \bm{T}_b  \bm{T}_b) \right) 
+ \sum_{i=1}^{a} \sum_{j\neq i} \sum_{k\neq i,j}
\left( {\rm tr}(\bm{\Sigma}_{a}) 
{\rm tr}(\bm{\Sigma}_{a}^{3} \bm{T}_b  \bm{T}_b  \bm{T}_b  \bm{T}_b) \right) \\
&& + \sum_{i=1}^{a} \sum_{j\neq i} \sum_{k\neq i,j}
\left( {\rm tr}(\bm{\Sigma}_{a}^{2})
{\rm tr}(\bm{\Sigma}_{a}^{2}\bm{T}_b  \bm{T}_b  \bm{T}_b  \bm{T}_b) \right) 
+ \sum_{i=1}^{a} \sum_{j\neq i} \sum_{k\neq i,j}
\left( {\rm tr}(\bm{\Sigma}_{a}^{3}\bm{T}_b  \bm{T}_b  \bm{T}_b  \bm{T}_b) 
{\rm tr}(\bm{\Sigma}_{a}) \right) \\
&& \left. 
+ \sum_{i=1}^{a} \sum_{j\neq i} \sum_{k\neq i,j} \sum_{l\neq i,j,k} 
\left( {\rm tr}(\bm{\Sigma}_{a}^{4}\bm{T}_b  \bm{T}_b  \bm{T}_b  \bm{T}_b) \right) 
\right]  \\
&\sim&  a {\rm tr}\left( \bm{\Sigma}_{a} \right) {\rm tr}\left( \bm{\Sigma}_{a} \right) {\rm tr}\left( \bm{\Sigma}_{a} \right) 
{\rm E}\left[ {\rm tr}\left( \bm{\Sigma}_{a} \bm{T}_b  \bm{T}_b  \bm{T}_b  \bm{T}_b \right) \right]  \\
&=& O(abp^7),
\end{eqnarray*}
where we use the following facts in the above asymptotic evaluation: 
Let $\bm{x}\sim N_p(\bm{0}_p,\bm{I}_p)$ and 
$\bm{Q},\bm{R},\bm{S},\bm{T}$ be $p\times p$ matrices, 
then
\[
E\left[ \bm{x}(\bm{x}'\bm{Q}\bm{x})\bm{x}' \right] 
= \bm{Q} + \bm{Q}' + {\rm tr}(\bm{Q})\bm{I}_p, \]
\[ E\left[ (\bm{x}'\bm{Q}\bm{x}) (\bm{x}'\bm{R}\bm{x}) \right] 
= {\rm tr}(\bm{Q}){\rm tr}(\bm{R}) + {\rm tr}(\bm{Q}\bm{R}) + {\rm tr}(\bm{Q}\bm{R}'), \]
\begin{eqnarray*}
&& E\left[ (\bm{x}'\bm{Q}\bm{x}) (\bm{x}'\bm{R}\bm{x}) (\bm{x}'\bm{S}\bm{x}) \right] \\
&=& 
{\rm tr}(\bm{Q}) {\rm tr}(\bm{R}) {\rm tr}(\bm{S}) 
+{\rm tr}(\bm{Q}) {\rm tr}(\bm{R}\bm{S}) 
+{\rm tr}(\bm{Q}) {\rm tr}(\bm{R}\bm{S}') 
+{\rm tr}(\bm{R}) {\rm tr}(\bm{Q}\bm{S}) 
+{\rm tr}(\bm{R}) {\rm tr}(\bm{Q}\bm{S}') 
+{\rm tr}(\bm{S}) {\rm tr}(\bm{Q}\bm{R}) 
+{\rm tr}(\bm{S}) {\rm tr}(\bm{Q}\bm{R}') \\
&& +{\rm tr}(\bm{Q}\bm{R}\bm{S}) 
+{\rm tr}(\bm{Q}\bm{R}\bm{S}') 
+{\rm tr}(\bm{Q}\bm{R}'\bm{S}) 
+{\rm tr}(\bm{Q}\bm{R}'\bm{S}') 
+{\rm tr}(\bm{Q}\bm{S}\bm{R}) 
+{\rm tr}(\bm{Q}\bm{S}\bm{R}') 
+{\rm tr}(\bm{Q}\bm{S}'\bm{R}) 
+{\rm tr}(\bm{Q}\bm{S}'\bm{R}'),
\end{eqnarray*}
and
\begin{eqnarray*}
&&E\left[ (\bm{x}'\bm{Q}\bm{x}) (\bm{x}'\bm{R}\bm{x}) (\bm{x}'\bm{S}\bm{x}) (\bm{x}'\bm{T}\bm{x}) \right] \\
&=& {\rm tr}(\bm{Q}) {\rm tr}(\bm{R}) {\rm tr}(\bm{S}) {\rm tr}(\bm{T}) \\
&& + {\rm tr}(\bm{Q}) {\rm tr}(\bm{R}) {\rm tr}(\bm{S}\bm{T}) 
+ {\rm tr}(\bm{Q}) {\rm tr}(\bm{R}) {\rm tr}(\bm{S}\bm{T}') 
+ {\rm tr}(\bm{Q}) {\rm tr}(\bm{S}) {\rm tr}(\bm{R}\bm{T}) 
+ {\rm tr}(\bm{Q}) {\rm tr}(\bm{S}) {\rm tr}(\bm{R}\bm{T}') \\
&& + {\rm tr}(\bm{Q}) {\rm tr}(\bm{T}) {\rm tr}(\bm{R}\bm{S}) 
+ {\rm tr}(\bm{Q}) {\rm tr}(\bm{T}) {\rm tr}(\bm{R}\bm{S}') 
+ {\rm tr}(\bm{R}) {\rm tr}(\bm{S}) {\rm tr}(\bm{Q}\bm{T}) 
+ {\rm tr}(\bm{R}) {\rm tr}(\bm{S}) {\rm tr}(\bm{Q}\bm{T}') \\
&& + {\rm tr}(\bm{R}) {\rm tr}(\bm{T}) {\rm tr}(\bm{Q}\bm{S}) 
+ {\rm tr}(\bm{R}) {\rm tr}(\bm{T}) {\rm tr}(\bm{Q}\bm{S}') 
+ {\rm tr}(\bm{S}) {\rm tr}(\bm{T}) {\rm tr}(\bm{Q}\bm{R}) 
+ {\rm tr}(\bm{S}) {\rm tr}(\bm{T}) {\rm tr}(\bm{Q}\bm{R}') \\
&& + {\rm tr}(\bm{Q}\bm{R}) {\rm tr}(\bm{S}\bm{T}) 
+ {\rm tr}(\bm{Q}\bm{R}') {\rm tr}(\bm{S}\bm{T}) 
+ {\rm tr}(\bm{Q}\bm{R}) {\rm tr}(\bm{S}\bm{T}') 
+ {\rm tr}(\bm{Q}\bm{R}') {\rm tr}(\bm{S}\bm{T}') 
+ {\rm tr}(\bm{Q}\bm{S}) {\rm tr}(\bm{R}\bm{T}) 
+ {\rm tr}(\bm{Q}\bm{S}') {\rm tr}(\bm{R}\bm{T}) \\
&& + {\rm tr}(\bm{Q}\bm{S}) {\rm tr}(\bm{R}\bm{T}') 
+ {\rm tr}(\bm{Q}\bm{S}') {\rm tr}(\bm{R}\bm{T}') 
+ {\rm tr}(\bm{Q}\bm{T}) {\rm tr}(\bm{R}\bm{S}) 
+ {\rm tr}(\bm{Q}\bm{T}') {\rm tr}(\bm{R}\bm{S}) 
+ {\rm tr}(\bm{Q}\bm{T}) {\rm tr}(\bm{R}\bm{S}') 
+ {\rm tr}(\bm{Q}\bm{T}') {\rm tr}(\bm{R}\bm{S}') \\
&& + {\rm tr}(\bm{Q}) {\rm tr}(\bm{R}\bm{S}\bm{T}) 
+ {\rm tr}(\bm{Q}) {\rm tr}(\bm{R}\bm{S}'\bm{T}) 
+ {\rm tr}(\bm{Q}) {\rm tr}(\bm{R}\bm{S}\bm{T}') 
+ {\rm tr}(\bm{Q}) {\rm tr}(\bm{R}\bm{S}'\bm{T}') 
+ {\rm tr}(\bm{Q}) {\rm tr}(\bm{R}\bm{T}\bm{S}) 
+ {\rm tr}(\bm{Q}) {\rm tr}(\bm{R}\bm{T}'\bm{S}) \\
&& + {\rm tr}(\bm{Q}) {\rm tr}(\bm{R}\bm{T}\bm{S}') 
+ {\rm tr}(\bm{Q}) {\rm tr}(\bm{R}\bm{T}'\bm{S}') 
+ {\rm tr}(\bm{R}) {\rm tr}(\bm{Q}\bm{S}\bm{T}) 
+ {\rm tr}(\bm{R}) {\rm tr}(\bm{Q}\bm{S}'\bm{T}) 
+ {\rm tr}(\bm{R}) {\rm tr}(\bm{Q}\bm{S}\bm{T}') 
+ {\rm tr}(\bm{R}) {\rm tr}(\bm{Q}\bm{S}'\bm{T}') \\
&& + {\rm tr}(\bm{R}) {\rm tr}(\bm{Q}\bm{T}\bm{S}) 
+ {\rm tr}(\bm{R}) {\rm tr}(\bm{Q}\bm{T}'\bm{S}) 
+ {\rm tr}(\bm{R}) {\rm tr}(\bm{Q}\bm{T}\bm{S}') 
+ {\rm tr}(\bm{R}) {\rm tr}(\bm{Q}\bm{T}'\bm{S}') 
+ {\rm tr}(\bm{S}) {\rm tr}(\bm{Q}\bm{R}\bm{T}) 
+ {\rm tr}(\bm{S}) {\rm tr}(\bm{Q}\bm{R}'\bm{T}) \\
&& + {\rm tr}(\bm{S}) {\rm tr}(\bm{Q}\bm{R}\bm{T}') 
+ {\rm tr}(\bm{S}) {\rm tr}(\bm{Q}\bm{R}'\bm{T}') 
+ {\rm tr}(\bm{S}) {\rm tr}(\bm{Q}\bm{T}\bm{R}) 
+ {\rm tr}(\bm{S}) {\rm tr}(\bm{Q}\bm{T}'\bm{R}) 
+ {\rm tr}(\bm{S}) {\rm tr}(\bm{Q}\bm{T}\bm{R}') 
+ {\rm tr}(\bm{S}) {\rm tr}(\bm{Q}\bm{T}'\bm{R}') \\
&& + {\rm tr}(\bm{T}) {\rm tr}(\bm{Q}\bm{R}\bm{S}) 
+ {\rm tr}(\bm{T}) {\rm tr}(\bm{Q}\bm{R}'\bm{S}) 
+ {\rm tr}(\bm{T}) {\rm tr}(\bm{Q}\bm{R}\bm{S}') 
+ {\rm tr}(\bm{T}) {\rm tr}(\bm{Q}\bm{R}'\bm{S}') 
+ {\rm tr}(\bm{T}) {\rm tr}(\bm{Q}\bm{S}\bm{R}) 
+ {\rm tr}(\bm{T}) {\rm tr}(\bm{Q}\bm{S}'\bm{R}) \\
&& + {\rm tr}(\bm{T}) {\rm tr}(\bm{Q}\bm{S}\bm{R}') 
+ {\rm tr}(\bm{T}) {\rm tr}(\bm{Q}\bm{S}'\bm{R}') \\
&& + {\rm tr}(\bm{Q}\bm{R}\bm{S}\bm{T}) 
+ {\rm tr}(\bm{Q}\bm{R}'\bm{S}\bm{T}) 
+ {\rm tr}(\bm{Q}\bm{R}\bm{S}'\bm{T}) 
+ {\rm tr}(\bm{Q}\bm{R}\bm{S}\bm{T}') 
+ {\rm tr}(\bm{Q}\bm{R}'\bm{S}'\bm{T}) 
+ {\rm tr}(\bm{Q}\bm{R}'\bm{S}\bm{T}') 
+ {\rm tr}(\bm{Q}\bm{R}\bm{S}'\bm{T}') 
+ {\rm tr}(\bm{Q}\bm{R}'\bm{S}'\bm{T}') \\
&& + {\rm tr}(\bm{Q}\bm{R}\bm{T}\bm{S}) 
+ {\rm tr}(\bm{Q}\bm{R}'\bm{T}\bm{S}) 
+ {\rm tr}(\bm{Q}\bm{R}\bm{T}'\bm{S}) 
+ {\rm tr}(\bm{Q}\bm{R}\bm{T}\bm{S}') 
+ {\rm tr}(\bm{Q}\bm{R}'\bm{T}'\bm{S}) 
+ {\rm tr}(\bm{Q}\bm{R}'\bm{T}\bm{S}') 
+ {\rm tr}(\bm{Q}\bm{R}\bm{T}'\bm{S}') 
+ {\rm tr}(\bm{Q}\bm{R}'\bm{T}'\bm{S}') \\
&& + {\rm tr}(\bm{Q}\bm{S}\bm{R}\bm{T}) 
+ {\rm tr}(\bm{Q}\bm{S}'\bm{R}\bm{T}) 
+ {\rm tr}(\bm{Q}\bm{S}\bm{R}'\bm{T}) 
+ {\rm tr}(\bm{Q}\bm{S}\bm{R}\bm{T}') 
+ {\rm tr}(\bm{Q}\bm{S}'\bm{R}'\bm{T}) 
+ {\rm tr}(\bm{Q}\bm{S}'\bm{R}\bm{T}') 
+ {\rm tr}(\bm{Q}\bm{S}\bm{R}'\bm{T}') 
+ {\rm tr}(\bm{Q}\bm{S}'\bm{R}'\bm{T}') \\
&& + {\rm tr}(\bm{Q}\bm{S}\bm{T}\bm{R}) 
+ {\rm tr}(\bm{Q}\bm{S}'\bm{T}\bm{R}) 
+ {\rm tr}(\bm{Q}\bm{S}\bm{T}'\bm{R}) 
+ {\rm tr}(\bm{Q}\bm{S}\bm{T}\bm{R}') 
+ {\rm tr}(\bm{Q}\bm{S}'\bm{T}'\bm{R}) 
+ {\rm tr}(\bm{Q}\bm{S}'\bm{T}\bm{R}') 
+ {\rm tr}(\bm{Q}\bm{S}\bm{T}'\bm{R}') 
+ {\rm tr}(\bm{Q}\bm{S}'\bm{T}'\bm{R}') \\
&& + {\rm tr}(\bm{Q}\bm{T}\bm{R}\bm{S}) 
+ {\rm tr}(\bm{Q}\bm{T}'\bm{R}\bm{S}) 
+ {\rm tr}(\bm{Q}\bm{T}\bm{R}'\bm{S}) 
+ {\rm tr}(\bm{Q}\bm{T}\bm{R}\bm{S}') 
+ {\rm tr}(\bm{Q}\bm{T}'\bm{R}'\bm{S}) 
+ {\rm tr}(\bm{Q}\bm{T}'\bm{R}\bm{S}') 
+ {\rm tr}(\bm{Q}\bm{T}\bm{R}'\bm{S}') 
+ {\rm tr}(\bm{Q}\bm{T}'\bm{R}'\bm{S}') \\
&& + {\rm tr}(\bm{Q}\bm{T}\bm{S}\bm{R}) 
+ {\rm tr}(\bm{Q}\bm{T}'\bm{S}\bm{R}) 
+ {\rm tr}(\bm{Q}\bm{T}\bm{S}'\bm{R}) 
+ {\rm tr}(\bm{Q}\bm{T}\bm{S}\bm{R}') 
+ {\rm tr}(\bm{Q}\bm{T}'\bm{S}'\bm{R}) 
+ {\rm tr}(\bm{Q}\bm{T}'\bm{S}\bm{R}') 
+ {\rm tr}(\bm{Q}\bm{T}\bm{S}'\bm{R}') 
+ {\rm tr}(\bm{Q}\bm{T}'\bm{S}'\bm{R}').
\end{eqnarray*}
We can prove the rest of the assertions in similar ways, so we omit their details.
(ii) Under the assumptions, the assetions of (ii) can be proven by similar arguments to the proof of (i).
This completes the proof.
\qed

\section{Proofs for Section 2}\label{sec:proof}
\subsection{Proof of Proposition~\ref{prop21}}

The expectation of 
$\{{\rm tr}\left( \bm{T}_{a} \bm{T}_{b} \bm{T}_{c} \bm{T}_{d} \right)^2\}$ can be calculated as follows:
\begin{eqnarray*}
\lefteqn{ {\rm E} \left[ \left\{ {\rm tr}\left( 
\bm{T}_{a} \bm{T}_{b} \bm{T}_{c} \bm{T}_{d} \right) \right\}^2 \right] 
= {\rm E} \left[ {\rm tr}\left( 
\bm{T}_{a} \bm{T}_{b} \bm{T}_{c} \bm{T}_{d} \right) 
{\rm tr}\left( 
\bm{T}_{a} \bm{T}_{b} \bm{T}_{c} \bm{T}_{d} \right) \right] 
= {\rm E} \left[ {\rm tr}\left( 
{\rm tr}\left( \bm{T}_{a} \bm{T}_{b} \bm{T}_{c} \bm{T}_{d} \right)
\bm{T}_{a} \bm{T}_{b} \bm{T}_{c} \bm{T}_{d} \right) 
 \right] 
}\\
&=& n_a {\rm E} \left[ {\rm tr}\left( 
\left\{ 
\bm{\Sigma}_a\bm{T}_{b} \bm{T}_{c} \bm{T}_{d}\bm{\Sigma}_a 
+ \bm{\Sigma}_a\bm{T}_{d} \bm{T}_{c} \bm{T}_{b}\bm{\Sigma}_a 
+ n_a {\rm tr}(\bm{\Sigma}_a\bm{T}_{b} \bm{T}_{c} \bm{T}_{d}) \bm{\Sigma}_a 
\right\}
\bm{T}_{b} \bm{T}_{c} \bm{T}_{d} \right) 
 \right] \\
&=& n_a {\rm E} \left[ {\rm tr}\left( 
\bm{T}_{b} \bm{T}_{c} \bm{T}_{d}\bm{\Sigma}_a 
\bm{T}_{b} \bm{T}_{c} \bm{T}_{d}\bm{\Sigma}_a \right) 
 \right]
+n_a {\rm E} \left[ {\rm tr}\left( 
\bm{T}_{b}\bm{\Sigma}_a 
\bm{T}_{b} \bm{T}_{c} \bm{T}_{d}\bm{\Sigma}_a\bm{T}_{d} \bm{T}_{c}  \right) 
 \right]
+n_a^2 {\rm E} \left[ {\rm tr}\left( 
{\rm tr}(\bm{T}_{b} \bm{T}_{c} \bm{T}_{d}\bm{\Sigma}_a) 
\bm{T}_{b} \bm{T}_{c} \bm{T}_{d}\bm{\Sigma}_a \right) 
 \right] \\
&=& n_a n_b {\rm E} \left[ {\rm tr}\left( 
\left\{ 
n_b \bm{\Sigma}_b \bm{T}_{c} \bm{T}_{d}\bm{\Sigma}_a \bm{\Sigma}_b 
+ \bm{\Sigma}_b \bm{\Sigma}_a\bm{T}_{d}\bm{T}_{c} \bm{\Sigma}_b 
+ {\rm tr}(\bm{\Sigma}_b \bm{T}_{c} \bm{T}_{d}\bm{\Sigma}_a) \bm{\Sigma}_b
\right\}
 \bm{T}_{c} \bm{T}_{d}\bm{\Sigma}_a \right) 
 \right] \\
&& +n_a n_b {\rm E} \left[ {\rm tr}\left( 
\left\{ 
(n_b+1) \bm{\Sigma}_b \bm{\Sigma}_a \bm{\Sigma}_b 
+ {\rm tr}(\bm{\Sigma}_b \bm{\Sigma}_a) \bm{\Sigma}_b
\right\} 
\bm{T}_{c} \bm{T}_{d}\bm{\Sigma}_a\bm{T}_{d} \bm{T}_{c}  \right) 
 \right] \\
&& +n_a^2 n_b {\rm E} \left[ {\rm tr}\left( 
\left\{ 
\bm{\Sigma}_b\bm{T}_{c} \bm{T}_{d}\bm{\Sigma}_a\bm{\Sigma}_b 
+ \bm{\Sigma}_b\bm{\Sigma}_a\bm{T}_{d}\bm{T}_{c} \bm{\Sigma}_b 
+ n_b {\rm tr}(\bm{\Sigma}_b\bm{T}_{c} \bm{T}_{d}\bm{\Sigma}_a) \bm{\Sigma}_b
\right\}
\bm{T}_{c} \bm{T}_{d}\bm{\Sigma}_a \right) 
 \right] \\
&=& n_a n_b^2 {\rm E} \left[ {\rm tr}\left( 
\bm{T}_{c} \bm{T}_{d}\bm{\Sigma}_a \bm{\Sigma}_b 
\bm{T}_{c} \bm{T}_{d}\bm{\Sigma}_a \bm{\Sigma}_b \right) 
 \right]
+ n_a n_b {\rm E} \left[ {\rm tr}\left( 
\bm{T}_{c} \bm{\Sigma}_b \bm{T}_{c} 
\bm{T}_{d}\bm{\Sigma}_a\bm{\Sigma}_b \bm{\Sigma}_a\bm{T}_{d} \right) 
 \right]
+ n_a n_b {\rm E} \left[ {\rm tr}\left( 
{\rm tr}(\bm{T}_{c} \bm{T}_{d}\bm{\Sigma}_a\bm{\Sigma}_b) 
\bm{T}_{c} \bm{T}_{d}\bm{\Sigma}_a\bm{\Sigma}_b \right) 
 \right] \\
&& + n_a n_b (n_b +1) {\rm E} \left[ {\rm tr}\left( 
\bm{T}_{c} \bm{\Sigma}_b \bm{\Sigma}_a \bm{\Sigma}_b 
\bm{T}_{c} \bm{T}_{d}\bm{\Sigma}_a\bm{T}_{d} \right) 
 \right]
+ n_a n_b {\rm tr}(\bm{\Sigma}_a\bm{\Sigma}_b) {\rm E} \left[ {\rm tr}\left( 
\bm{T}_{c} \bm{\Sigma}_b \bm{T}_{c} 
\bm{T}_{d}\bm{\Sigma}_a\bm{T}_{d}   \right) 
 \right] \\
&& + n_a^2 n_b {\rm E} \left[ {\rm tr}\left( 
\bm{T}_{c} \bm{T}_{d}\bm{\Sigma}_a\bm{\Sigma}_b 
\bm{T}_{c} \bm{T}_{d}\bm{\Sigma}_a\bm{\Sigma}_b \right) 
 \right]
+ n_a^2 n_b {\rm E} \left[ {\rm tr}\left( 
\bm{T}_{c} \bm{\Sigma}_b \bm{T}_{c} 
\bm{T}_{d}\bm{\Sigma}_a\bm{\Sigma}_b\bm{\Sigma}_a\bm{T}_{d} \right) 
 \right]
+ n_a^2 n_b^2 {\rm E} \left[ {\rm tr}\left( 
{\rm tr}(\bm{T}_{c} \bm{T}_{d}\bm{\Sigma}_a\bm{\Sigma}_b) 
\bm{T}_{c} \bm{T}_{d}\bm{\Sigma}_a\bm{\Sigma}_b \right) 
 \right] \\
&=& n_a n_b (n_a + n_b ) {\rm E} \left[ {\rm tr}\left( 
\bm{T}_{c} \bm{T}_{d}\bm{\Sigma}_a \bm{\Sigma}_b 
\bm{T}_{c} \bm{T}_{d}\bm{\Sigma}_a \bm{\Sigma}_b \right) 
 \right]
+ n_a n_b (n_a +1) {\rm E} \left[ {\rm tr}\left( 
\bm{T}_{c} \bm{\Sigma}_b \bm{T}_{c} 
\bm{T}_{d}\bm{\Sigma}_a\bm{\Sigma}_b \bm{\Sigma}_a\bm{T}_{d} \right) 
 \right] \\
&& 
+ n_a n_b (n_b +1) {\rm E} \left[ {\rm tr}\left( 
\bm{T}_{c} \bm{\Sigma}_b \bm{\Sigma}_a \bm{\Sigma}_b 
\bm{T}_{c} \bm{T}_{d}\bm{\Sigma}_a\bm{T}_{d} \right) 
 \right]
+ n_a n_b (n_a n_b+1) {\rm E} \left[ {\rm tr}\left( 
{\rm tr}(\bm{T}_{c} \bm{T}_{d}\bm{\Sigma}_a\bm{\Sigma}_b) 
\bm{T}_{c} \bm{T}_{d}\bm{\Sigma}_a\bm{\Sigma}_b \right) 
 \right] 
\\
&& + n_a n_b {\rm tr}(\bm{\Sigma}_a\bm{\Sigma}_b) {\rm E} \left[ {\rm tr}\left( 
\bm{T}_{c} \bm{\Sigma}_b \bm{T}_{c} 
\bm{T}_{d}\bm{\Sigma}_a\bm{T}_{d}   \right) 
 \right] \\
&=& n_a n_b n_c (n_a + n_b) {\rm E} \left[ {\rm tr}\left( 
\left\{
n_c \bm{\Sigma}_c \bm{T}_{d}\bm{\Sigma}_a \bm{\Sigma}_b \bm{\Sigma}_c 
+ \bm{\Sigma}_c \bm{\Sigma}_b\bm{\Sigma}_a \bm{T}_{d} \bm{\Sigma}_c 
+ {\rm tr}(\bm{\Sigma}_c \bm{T}_{d}\bm{\Sigma}_a \bm{\Sigma}_b) \bm{\Sigma}_c
\right\}
 \bm{T}_{d}\bm{\Sigma}_a \bm{\Sigma}_b \right) 
 \right] \\
&& +n_a n_b n_c(n_a +1) {\rm E} \left[ {\rm tr}\left( 
\left\{ 
(n_c +1) \bm{\Sigma}_c \bm{\Sigma}_b \bm{\Sigma}_c 
+ {\rm tr}(\bm{\Sigma}_c \bm{\Sigma}_b) \bm{\Sigma}_c
\right\}
\bm{T}_{d}\bm{\Sigma}_a\bm{\Sigma}_b \bm{\Sigma}_a\bm{T}_{d} \right) 
 \right] \\
&& + n_a n_b n_c(n_b+1) {\rm E} \left[ {\rm tr}\left( 
\left\{ 
(n_c+1) \bm{\Sigma}_c \bm{\Sigma}_b \bm{\Sigma}_a \bm{\Sigma}_b \bm{\Sigma}_c 
+ {\rm tr}(\bm{\Sigma}_c \bm{\Sigma}_b \bm{\Sigma}_a \bm{\Sigma}_b) \bm{\Sigma}_c
\right\} 
\bm{T}_{d}\bm{\Sigma}_a\bm{T}_{d} \right) 
 \right]
 \\
&& +n_a n_b n_c(n_a n_b+1) {\rm E} \left[ {\rm tr}\left( 
\left\{ 
\bm{\Sigma}_c\bm{T}_{d}\bm{\Sigma}_a\bm{\Sigma}_b\bm{\Sigma}_c 
+ \bm{\Sigma}_c\bm{\Sigma}_b\bm{\Sigma}_a\bm{T}_{d}\bm{\Sigma}_c 
+ n_c{\rm tr}(\bm{\Sigma}_c\bm{T}_{d}\bm{\Sigma}_a\bm{\Sigma}_b) \bm{\Sigma}_c
\right\}
\bm{T}_{d}\bm{\Sigma}_a\bm{\Sigma}_b \right) 
 \right] \\
&& + n_a n_b n_c {\rm tr}(\bm{\Sigma}_a\bm{\Sigma}_b) {\rm E} \left[ {\rm tr}\left( 
\left\{
(n_c +1) \bm{\Sigma}_c \bm{\Sigma}_b \bm{\Sigma}_c 
+ {\rm tr}(\bm{\Sigma}_c \bm{\Sigma}_b) \bm{\Sigma}_c
\right\}
\bm{T}_{d}\bm{\Sigma}_a\bm{T}_{d}   \right) 
 \right] \\
&=& n_a n_b n_c(n_a n_b+ n_a n_c+ n_b n_c+1) {\rm E} \left[ {\rm tr}\left( 
\bm{T}_{d}\bm{\Sigma}_a \bm{\Sigma}_b \bm{\Sigma}_c 
\bm{T}_{d}\bm{\Sigma}_a \bm{\Sigma}_b \bm{\Sigma}_c \right) 
 \right]
+n_a n_b n_c(n_a+1)(n_b+1) {\rm E} \left[ {\rm tr}\left( 
\bm{T}_{d} \bm{\Sigma}_c \bm{T}_{d}
\bm{\Sigma}_a \bm{\Sigma}_b\bm{\Sigma}_c \bm{\Sigma}_b\bm{\Sigma}_a  \right) 
 \right] \\
&& +n_a n_b n_c(n_a+1)(n_c+1) {\rm E} \left[ {\rm tr}\left( 
\bm{T}_{d}\bm{\Sigma}_a\bm{\Sigma}_b \bm{\Sigma}_a
\bm{T}_{d}\bm{\Sigma}_c \bm{\Sigma}_b \bm{\Sigma}_c \right) 
 \right]
+n_a n_b n_c(n_b+1)(n_c+1) {\rm E} \left[ {\rm tr}\left( 
\bm{T}_{d}\bm{\Sigma}_a\bm{T}_{d}
 \bm{\Sigma}_c \bm{\Sigma}_b \bm{\Sigma}_a \bm{\Sigma}_b \bm{\Sigma}_c  \right) 
 \right]
\\
&& 
+n_a n_b n_c(n_a n_b n_c+n_a+n_b+n_c) {\rm E} \left[ {\rm tr}\left( 
{\rm tr}(\bm{T}_{d}\bm{\Sigma}_a \bm{\Sigma}_b\bm{\Sigma}_c) 
 \bm{T}_{d}\bm{\Sigma}_a \bm{\Sigma}_b\bm{\Sigma}_c \right) 
 \right] 
+n_a n_b n_c(n_a+1){\rm tr}(\bm{\Sigma}_b\bm{\Sigma}_c) {\rm E} \left[ {\rm tr}\left( 
\bm{T}_{d} \bm{\Sigma}_c\bm{T}_{d}
\bm{\Sigma}_a\bm{\Sigma}_b \bm{\Sigma}_a \right) 
 \right] \\
&& 
+n_a n_b n_c(n_b+1){\rm tr}(\bm{\Sigma}_a \bm{\Sigma}_b\bm{\Sigma}_c\bm{\Sigma}_b) {\rm E} \left[ {\rm tr}\left( 
\bm{T}_{d}\bm{\Sigma}_a\bm{T}_{d} \bm{\Sigma}_c \right) 
 \right]
+n_a n_b n_c(n_c+1){\rm tr}(\bm{\Sigma}_a\bm{\Sigma}_b) {\rm E} \left[ {\rm tr}\left( 
\bm{T}_{d}\bm{\Sigma}_a\bm{T}_{d}\bm{\Sigma}_c \bm{\Sigma}_b \bm{\Sigma}_c \right) 
 \right]
 \\
&& + n_a n_b n_c{\rm tr}(\bm{\Sigma}_a\bm{\Sigma}_b){\rm tr}(\bm{\Sigma}_b\bm{\Sigma}_c) {\rm E} \left[ {\rm tr}\left( 
\bm{T}_{d}\bm{\Sigma}_a\bm{T}_{d}\bm{\Sigma}_c \right) 
 \right] \\
&=& n_a n_b n_c n_d(n_a n_b+ n_a n_c+ n_b n_c+1){\rm tr}\left( 
\left\{ 
n_d \bm{\Sigma}_d \bm{\Sigma}_a \bm{\Sigma}_b \bm{\Sigma}_c \bm{\Sigma}_d 
+ \bm{\Sigma}_d \bm{\Sigma}_c \bm{\Sigma}_b \bm{\Sigma}_a \bm{\Sigma}_d 
+ {\rm tr}(\bm{\Sigma}_d \bm{\Sigma}_a \bm{\Sigma}_b \bm{\Sigma}_c) \bm{\Sigma}_d
\right\}
\bm{\Sigma}_a \bm{\Sigma}_b \bm{\Sigma}_c \right) \\
&&
+n_a n_b n_c n_d (n_a+1)(n_b+1){\rm tr}\left( 
\left\{ 
(n_d+1) \bm{\Sigma}_d \bm{\Sigma}_c \bm{\Sigma}_d 
+ {\rm tr}(\bm{\Sigma}_d \bm{\Sigma}_c) \bm{\Sigma}_d
\right\}
\bm{\Sigma}_a \bm{\Sigma}_b\bm{\Sigma}_c \bm{\Sigma}_b\bm{\Sigma}_a  \right) 
 \\
&& + n_a n_b n_c n_d (n_a+1)(n_c+1) {\rm tr}\left( 
\left\{ 
(n_d+1) \bm{\Sigma}_d \bm{\Sigma}_a\bm{\Sigma}_b \bm{\Sigma}_a \bm{\Sigma}_d 
+ {\rm tr}(\bm{\Sigma}_d \bm{\Sigma}_a\bm{\Sigma}_b \bm{\Sigma}_a) \bm{\Sigma}_d
\right\}
\bm{\Sigma}_c \bm{\Sigma}_b \bm{\Sigma}_c \right) 
 \\
&&
+ n_a n_b n_c n_d (n_b+1)(n_c+1) {\rm tr}\left( 
\left\{
(n_d+1) \bm{\Sigma}_d \bm{\Sigma}_a \bm{\Sigma}_d 
+ {\rm tr}(\bm{\Sigma}_d \bm{\Sigma}_a) \bm{\Sigma}_d
\right\}
 \bm{\Sigma}_c \bm{\Sigma}_b \bm{\Sigma}_a \bm{\Sigma}_b \bm{\Sigma}_c  \right) 
\\
&& 
+ n_a n_b n_c n_d (n_a n_b n_c+ n_a+ n_b+ n_c) {\rm tr}\left( 
\left\{ 
\bm{\Sigma}_d\bm{\Sigma}_a \bm{\Sigma}_b\bm{\Sigma}_c\bm{\Sigma}_d 
+ \bm{\Sigma}_d\bm{\Sigma}_c \bm{\Sigma}_b\bm{\Sigma}_a\bm{\Sigma}_d 
+ n_d {\rm tr}(\bm{\Sigma}_d\bm{\Sigma}_a \bm{\Sigma}_b\bm{\Sigma}_c) \bm{\Sigma}_d
\right\}
\bm{\Sigma}_a \bm{\Sigma}_b\bm{\Sigma}_c \right) 
 \\
&&
+ n_a n_b n_c n_d (n_a+1){\rm tr}(\bm{\Sigma}_b\bm{\Sigma}_c) {\rm tr}\left( 
\left\{ 
(n_d+1) \bm{\Sigma}_d \bm{\Sigma}_c \bm{\Sigma}_d 
+ {\rm tr}(\bm{\Sigma}_d \bm{\Sigma}_c) \bm{\Sigma}_d
\right\}
\bm{\Sigma}_a\bm{\Sigma}_b \bm{\Sigma}_a \right) 
\\
&& 
+n_a n_b n_c n_d(n_b+1){\rm tr}(\bm{\Sigma}_a \bm{\Sigma}_b\bm{\Sigma}_c\bm{\Sigma}_b)
{\rm tr}\left( 
\left\{
(n_d+1) \bm{\Sigma}_d \bm{\Sigma}_a \bm{\Sigma}_d 
+ {\rm tr}(\bm{\Sigma}_d \bm{\Sigma}_a) \bm{\Sigma}_d
\right\}
\bm{\Sigma}_c \right) \\
&&
+n_a n_b n_c n_d(n_c+1){\rm tr}(\bm{\Sigma}_a\bm{\Sigma}_b)
{\rm tr}\left( 
\left\{
(n_d+1) \bm{\Sigma}_d \bm{\Sigma}_a \bm{\Sigma}_d 
+ {\rm tr}(\bm{\Sigma}_d \bm{\Sigma}_a) \bm{\Sigma}_d
\right\}
\bm{\Sigma}_c \bm{\Sigma}_b \bm{\Sigma}_c \right) 
 \\
&& + n_a n_b n_c n_d{\rm tr}(\bm{\Sigma}_a\bm{\Sigma}_b){\rm tr}(\bm{\Sigma}_b\bm{\Sigma}_c)
{\rm tr}\left( 
\left\{
(n_d+1) \bm{\Sigma}_d \bm{\Sigma}_a \bm{\Sigma}_d 
+ {\rm tr}(\bm{\Sigma}_d \bm{\Sigma}_a) \bm{\Sigma}_d
\right\}
\bm{\Sigma}_c \right) 
\\
&=& n_a n_b n_c n_d \biggl[ 
(n_a n_b+n_a n_c+n_b n_c+1) n_d {\rm tr}\left( 
\bm{\Sigma}_a \bm{\Sigma}_b \bm{\Sigma}_c \bm{\Sigma}_d 
\bm{\Sigma}_a \bm{\Sigma}_b \bm{\Sigma}_c \bm{\Sigma}_d \right)
+(n_a n_b+n_a n_c+ n_b n_c+1){\rm tr}\left( 
\bm{\Sigma}_a \bm{\Sigma}_b \bm{\Sigma}_c \bm{\Sigma}_d 
\bm{\Sigma}_c \bm{\Sigma}_b \bm{\Sigma}_a \bm{\Sigma}_d \right) \\
&&
+(n_a n_b+n_a n_c+ n_b n_c+1)
\left\{ {\rm tr}(\bm{\Sigma}_a \bm{\Sigma}_b \bm{\Sigma}_c \bm{\Sigma}_d) \right\}^2 
\\
&&
+(n_a+1)(n_b+1)(n_d+1){\rm tr}\left( 
\bm{\Sigma}_a \bm{\Sigma}_b\bm{\Sigma}_c \bm{\Sigma}_b
\bm{\Sigma}_a\bm{\Sigma}_d \bm{\Sigma}_c \bm{\Sigma}_d \right) 
+(n_a+1)(n_b+1){\rm tr}(\bm{\Sigma}_c \bm{\Sigma}_d)
{\rm tr}\left( \bm{\Sigma}_a \bm{\Sigma}_b\bm{\Sigma}_c \bm{\Sigma}_b\bm{\Sigma}_a \bm{\Sigma}_d \right) 
 \\
&& +(n_a+1)(n_c+1)(n_d+1) {\rm tr}\left( 
\bm{\Sigma}_a\bm{\Sigma}_b \bm{\Sigma}_a \bm{\Sigma}_d 
\bm{\Sigma}_c \bm{\Sigma}_b \bm{\Sigma}_c \bm{\Sigma}_d \right) 
+(n_a+1)(n_c+1) {\rm tr}(\bm{\Sigma}_a\bm{\Sigma}_b \bm{\Sigma}_a\bm{\Sigma}_d )
{\rm tr}( \bm{\Sigma}_b \bm{\Sigma}_c \bm{\Sigma}_d \bm{\Sigma}_c )  \\
&&
+(n_b+1)(n_c+1)(n_d+1) {\rm tr}\left( 
\bm{\Sigma}_a \bm{\Sigma}_b \bm{\Sigma}_c \bm{\Sigma}_d 
\bm{\Sigma}_a \bm{\Sigma}_d \bm{\Sigma}_c \bm{\Sigma}_b   \right) 
+(n_b+1)(n_c+1){\rm tr}(\bm{\Sigma}_a \bm{\Sigma}_d) {\rm tr}\left( 
 \bm{\Sigma}_a \bm{\Sigma}_b \bm{\Sigma}_c \bm{\Sigma}_d \bm{\Sigma}_c \bm{\Sigma}_b \right) 
\\
&& 
+(n_a n_b n_c+n_a+n_b+n_c) {\rm tr}\left( 
\bm{\Sigma}_a \bm{\Sigma}_b\bm{\Sigma}_c\bm{\Sigma}_d 
\bm{\Sigma}_a \bm{\Sigma}_b\bm{\Sigma}_c\bm{\Sigma}_d \right) 
+(n_a n_b n_c+n_a+n_b+n_c) {\rm tr}\left( 
\bm{\Sigma}_a \bm{\Sigma}_b\bm{\Sigma}_c\bm{\Sigma}_d
\bm{\Sigma}_c \bm{\Sigma}_b\bm{\Sigma}_a\bm{\Sigma}_d 
 \right) \\
&& 
+(n_a n_b n_c+n_a+n_b+n_c)n_d 
\left\{ {\rm tr}(\bm{\Sigma}_a \bm{\Sigma}_b\bm{\Sigma}_c\bm{\Sigma}_d) \right\}^2 
 \\
&&
+(n_a +1)(n_d +1){\rm tr}(\bm{\Sigma}_b\bm{\Sigma}_c)
{\rm tr}\left( \bm{\Sigma}_a\bm{\Sigma}_b \bm{\Sigma}_a \bm{\Sigma}_d \bm{\Sigma}_c \bm{\Sigma}_d \right) 
+(n_a +1){\rm tr}(\bm{\Sigma}_b\bm{\Sigma}_c) {\rm tr}(\bm{\Sigma}_c \bm{\Sigma}_d) 
{\rm tr}( \bm{\Sigma}_a\bm{\Sigma}_b \bm{\Sigma}_a  \bm{\Sigma}_d ) 
\\
&& 
+(n_b +1)(n_d +1){\rm tr}(\bm{\Sigma}_a \bm{\Sigma}_b\bm{\Sigma}_c\bm{\Sigma}_b)
{\rm tr}(\bm{\Sigma}_a \bm{\Sigma}_d \bm{\Sigma}_c \bm{\Sigma}_d)
+(n_b +1){\rm tr}(\bm{\Sigma}_a \bm{\Sigma}_d) {\rm tr}(\bm{\Sigma}_c\bm{\Sigma}_d) 
{\rm tr}(\bm{\Sigma}_a \bm{\Sigma}_b\bm{\Sigma}_c\bm{\Sigma}_b)
 \\
&&
+(n_c +1)(n_d +1){\rm tr}(\bm{\Sigma}_a\bm{\Sigma}_b)
{\rm tr}\left( \bm{\Sigma}_a \bm{\Sigma}_d \bm{\Sigma}_c \bm{\Sigma}_b \bm{\Sigma}_c \bm{\Sigma}_d \right) 
+(n_c +1){\rm tr}(\bm{\Sigma}_a\bm{\Sigma}_b) {\rm tr}(\bm{\Sigma}_a \bm{\Sigma}_d) 
{\rm tr}( \bm{\Sigma}_b \bm{\Sigma}_c \bm{\Sigma}_d \bm{\Sigma}_c ) 
 \\
&& +(n_d+1){\rm tr}(\bm{\Sigma}_a\bm{\Sigma}_b){\rm tr}(\bm{\Sigma}_b\bm{\Sigma}_c)
{\rm tr}( \bm{\Sigma}_a \bm{\Sigma}_d \bm{\Sigma}_c \bm{\Sigma}_d ) 
+{\rm tr}(\bm{\Sigma}_a\bm{\Sigma}_b){\rm tr}(\bm{\Sigma}_a \bm{\Sigma}_d)
{\rm tr}(\bm{\Sigma}_b\bm{\Sigma}_c){\rm tr}( \bm{\Sigma}_c \bm{\Sigma}_d)
 \biggr]
\\
&=& n_a n_b n_c n_d \biggl[ 
(n_a n_b n_c n_d+n_a n_b+n_a n_c+n_a n_d+n_b n_c+n_b n_d+n_c n_d+1)
\left\{ {\rm tr}(\bm{\Sigma}_a \bm{\Sigma}_b \bm{\Sigma}_c \bm{\Sigma}_d) \right\}^2
\\
&&
+(n_a n_b n_c+n_a n_b n_d+n_a n_c n_d+n_b n_c n_d+n_a+n_b+n_c+n_d){\rm tr}\left( 
\bm{\Sigma}_a \bm{\Sigma}_b \bm{\Sigma}_c \bm{\Sigma}_d 
\bm{\Sigma}_a \bm{\Sigma}_b \bm{\Sigma}_c \bm{\Sigma}_d \right) \\
&&
+(n_a+1)(n_b+1)(n_c+1){\rm tr}\left( 
\bm{\Sigma}_a \bm{\Sigma}_b \bm{\Sigma}_c \bm{\Sigma}_d 
\bm{\Sigma}_c \bm{\Sigma}_b \bm{\Sigma}_a \bm{\Sigma}_d \right) 
+(n_a+1)(n_b+1)(n_d+1){\rm tr}\left( 
\bm{\Sigma}_a \bm{\Sigma}_b\bm{\Sigma}_c \bm{\Sigma}_b
\bm{\Sigma}_a\bm{\Sigma}_d \bm{\Sigma}_c \bm{\Sigma}_d \right) \\
&&
+(n_a+1)(n_c+1)(n_d+1) {\rm tr}\left( 
\bm{\Sigma}_a\bm{\Sigma}_b \bm{\Sigma}_a \bm{\Sigma}_d 
\bm{\Sigma}_c \bm{\Sigma}_b \bm{\Sigma}_c \bm{\Sigma}_d \right) 
+(n_b+1)(n_c+1)(n_d+1) {\rm tr}\left( 
\bm{\Sigma}_a \bm{\Sigma}_b \bm{\Sigma}_c \bm{\Sigma}_d 
\bm{\Sigma}_a \bm{\Sigma}_d \bm{\Sigma}_c \bm{\Sigma}_b   \right) 
\\
&& 
+(n_a+1)(n_b+1){\rm tr}(\bm{\Sigma}_c \bm{\Sigma}_d)
{\rm tr}\left( \bm{\Sigma}_a \bm{\Sigma}_b\bm{\Sigma}_c \bm{\Sigma}_b\bm{\Sigma}_a \bm{\Sigma}_d \right) 
+(n_a+1)(n_c+1) {\rm tr}(\bm{\Sigma}_a\bm{\Sigma}_b \bm{\Sigma}_a\bm{\Sigma}_d )
{\rm tr}( \bm{\Sigma}_b \bm{\Sigma}_c \bm{\Sigma}_d \bm{\Sigma}_c )  \\
&&
+(n_a+1)(n_d+1){\rm tr}(\bm{\Sigma}_b\bm{\Sigma}_c)
{\rm tr}\left( \bm{\Sigma}_a\bm{\Sigma}_b \bm{\Sigma}_a \bm{\Sigma}_d \bm{\Sigma}_c \bm{\Sigma}_d \right) 
+(n_b+1)(n_c+1){\rm tr}(\bm{\Sigma}_a \bm{\Sigma}_d) {\rm tr}\left( 
 \bm{\Sigma}_a \bm{\Sigma}_b \bm{\Sigma}_c \bm{\Sigma}_d \bm{\Sigma}_c \bm{\Sigma}_b \right) 
\\
&&
+(n_b+1)(n_d+1){\rm tr}(\bm{\Sigma}_a \bm{\Sigma}_b\bm{\Sigma}_c\bm{\Sigma}_b)
{\rm tr}(\bm{\Sigma}_a \bm{\Sigma}_d \bm{\Sigma}_c \bm{\Sigma}_d)
+(n_c+1)(n_d+1){\rm tr}(\bm{\Sigma}_a\bm{\Sigma}_b)
{\rm tr}\left( \bm{\Sigma}_a \bm{\Sigma}_d \bm{\Sigma}_c \bm{\Sigma}_b \bm{\Sigma}_c \bm{\Sigma}_d \right) 
\\
&& 
+(n_a+1){\rm tr}(\bm{\Sigma}_b\bm{\Sigma}_c) {\rm tr}(\bm{\Sigma}_c \bm{\Sigma}_d) 
{\rm tr}( \bm{\Sigma}_a\bm{\Sigma}_b \bm{\Sigma}_a  \bm{\Sigma}_d ) 
+(n_b+1){\rm tr}(\bm{\Sigma}_a \bm{\Sigma}_d) {\rm tr}(\bm{\Sigma}_c\bm{\Sigma}_d) 
{\rm tr}(\bm{\Sigma}_a \bm{\Sigma}_b\bm{\Sigma}_c\bm{\Sigma}_b)
 \\
&&
+(n_c+1){\rm tr}(\bm{\Sigma}_a\bm{\Sigma}_b) {\rm tr}(\bm{\Sigma}_a \bm{\Sigma}_d) 
{\rm tr}( \bm{\Sigma}_b \bm{\Sigma}_c \bm{\Sigma}_d \bm{\Sigma}_c ) 
+(n_d+1){\rm tr}(\bm{\Sigma}_a\bm{\Sigma}_b){\rm tr}(\bm{\Sigma}_b\bm{\Sigma}_c)
{\rm tr}( \bm{\Sigma}_a \bm{\Sigma}_d \bm{\Sigma}_c \bm{\Sigma}_d ) 
 \\
&& +{\rm tr}(\bm{\Sigma}_a\bm{\Sigma}_b){\rm tr}(\bm{\Sigma}_a \bm{\Sigma}_d)
{\rm tr}(\bm{\Sigma}_b\bm{\Sigma}_c){\rm tr}( \bm{\Sigma}_c \bm{\Sigma}_d)
 \biggr]
\end{eqnarray*}
Hence, the variance of ${\rm tr}\left( 
\bm{T}_{a} \bm{T}_{b} \bm{T}_{c} \bm{T}_{d} \right)$ is
\begin{eqnarray*}
\lefteqn{
{\rm V} \left[ {\rm tr}\left( 
\bm{T}_{a} \bm{T}_{b} \bm{T}_{c} \bm{T}_{d} \right) \right] 
} \\
&=& n_a n_b n_c n_d \biggl[ 
(n_a n_b+n_a n_c+n_a n_d+n_b n_c+n_b n_d+n_c n_d+1)
\left\{ {\rm tr}(\bm{\Sigma}_a \bm{\Sigma}_b \bm{\Sigma}_c \bm{\Sigma}_d) \right\}^2
\\
&&
+(n_a n_b n_c+n_a n_b n_d+n_a n_c n_d+n_b n_c n_d+n_a+n_b+n_c+n_d){\rm tr}\left( 
\bm{\Sigma}_a \bm{\Sigma}_b \bm{\Sigma}_c \bm{\Sigma}_d 
\bm{\Sigma}_a \bm{\Sigma}_b \bm{\Sigma}_c \bm{\Sigma}_d \right) \\
&&
+(n_a+1)(n_b+1)(n_c+1){\rm tr}\left( 
\bm{\Sigma}_a \bm{\Sigma}_b \bm{\Sigma}_c \bm{\Sigma}_d 
\bm{\Sigma}_c \bm{\Sigma}_b \bm{\Sigma}_a \bm{\Sigma}_d \right) 
+(n_a+1)(n_b+1)(n_d+1){\rm tr}\left( 
\bm{\Sigma}_a \bm{\Sigma}_b\bm{\Sigma}_c \bm{\Sigma}_b
\bm{\Sigma}_a\bm{\Sigma}_d \bm{\Sigma}_c \bm{\Sigma}_d \right) \\
&&
+(n_a+1)(n_c+1)(n_d+1) {\rm tr}\left( 
\bm{\Sigma}_a\bm{\Sigma}_b \bm{\Sigma}_a \bm{\Sigma}_d 
\bm{\Sigma}_c \bm{\Sigma}_b \bm{\Sigma}_c \bm{\Sigma}_d \right) 
+(n_b+1)(n_c+1)(n_d+1) {\rm tr}\left( 
\bm{\Sigma}_a \bm{\Sigma}_b \bm{\Sigma}_c \bm{\Sigma}_d 
\bm{\Sigma}_a \bm{\Sigma}_d \bm{\Sigma}_c \bm{\Sigma}_b   \right) 
\\
&& 
+(n_a+1)(n_b+1){\rm tr}(\bm{\Sigma}_c \bm{\Sigma}_d)
{\rm tr}\left( \bm{\Sigma}_a \bm{\Sigma}_b\bm{\Sigma}_c \bm{\Sigma}_b\bm{\Sigma}_a \bm{\Sigma}_d \right) 
+(n_a+1)(n_c+1) {\rm tr}(\bm{\Sigma}_a\bm{\Sigma}_b \bm{\Sigma}_a\bm{\Sigma}_d )
{\rm tr}( \bm{\Sigma}_b \bm{\Sigma}_c \bm{\Sigma}_d \bm{\Sigma}_c )  \\
&&
+(n_a+1)(n_d+1){\rm tr}(\bm{\Sigma}_b\bm{\Sigma}_c)
{\rm tr}\left( \bm{\Sigma}_a\bm{\Sigma}_b \bm{\Sigma}_a \bm{\Sigma}_d \bm{\Sigma}_c \bm{\Sigma}_d \right) 
+(n_b+1)(n_c+1){\rm tr}(\bm{\Sigma}_a \bm{\Sigma}_d) {\rm tr}\left( 
 \bm{\Sigma}_a \bm{\Sigma}_b \bm{\Sigma}_c \bm{\Sigma}_d \bm{\Sigma}_c \bm{\Sigma}_b \right) 
\\
&&
+(n_b+1)(n_d+1){\rm tr}(\bm{\Sigma}_a \bm{\Sigma}_b\bm{\Sigma}_c\bm{\Sigma}_b)
{\rm tr}(\bm{\Sigma}_a \bm{\Sigma}_d \bm{\Sigma}_c \bm{\Sigma}_d)
+(n_c+1)(n_d+1){\rm tr}(\bm{\Sigma}_a\bm{\Sigma}_b)
{\rm tr}\left( \bm{\Sigma}_a \bm{\Sigma}_d \bm{\Sigma}_c \bm{\Sigma}_b \bm{\Sigma}_c \bm{\Sigma}_d \right) 
\\
&& 
+(n_a+1){\rm tr}(\bm{\Sigma}_b\bm{\Sigma}_c) {\rm tr}(\bm{\Sigma}_c \bm{\Sigma}_d) 
{\rm tr}( \bm{\Sigma}_a\bm{\Sigma}_b \bm{\Sigma}_a  \bm{\Sigma}_d ) 
+(n_b+1){\rm tr}(\bm{\Sigma}_a \bm{\Sigma}_d) {\rm tr}(\bm{\Sigma}_c\bm{\Sigma}_d) 
{\rm tr}(\bm{\Sigma}_a \bm{\Sigma}_b\bm{\Sigma}_c\bm{\Sigma}_b)
 \\
&&
+(n_c+1){\rm tr}(\bm{\Sigma}_a\bm{\Sigma}_b) {\rm tr}(\bm{\Sigma}_a \bm{\Sigma}_d) 
{\rm tr}( \bm{\Sigma}_b \bm{\Sigma}_c \bm{\Sigma}_d \bm{\Sigma}_c ) 
+(n_d+1){\rm tr}(\bm{\Sigma}_a\bm{\Sigma}_b){\rm tr}(\bm{\Sigma}_b\bm{\Sigma}_c)
{\rm tr}( \bm{\Sigma}_a \bm{\Sigma}_d \bm{\Sigma}_c \bm{\Sigma}_d ) 
 \\
&& +{\rm tr}(\bm{\Sigma}_a\bm{\Sigma}_b){\rm tr}(\bm{\Sigma}_a \bm{\Sigma}_d)
{\rm tr}(\bm{\Sigma}_b\bm{\Sigma}_c){\rm tr}( \bm{\Sigma}_c \bm{\Sigma}_d)
 \biggr].
\end{eqnarray*}

Therefore, the conclusion follows from Assumption~\ref{ass1}.
This completes the proof of Proposition~\ref{prop21}.
\qed

\subsection{Proof of Lemma~\ref{lem23}}

From the definitions of $\bm{T}_{a}(n_a)$, $\bm{T}_{b}(n_b)$, $\bm{T}_{c}(n_c)$, $\bm{T}_{d}(n_d)$, 
\[ 
{\rm E}_h[M] 
= {\rm E}_h\left[\frac{1}{r_p}{\rm tr}\left( 
\bm{T}_{a}(n_a) \bm{T}_{b}(n_b) \bm{T}_{c}(n_c) \bm{T}_{d}(n_d) \right)\right] \]
is given as follows:
\begin{itemize}
\item for $h=1,\dots,n_a$, it holds that
\[ {\rm E}_h[M]=\frac{{n_b n_c n_d}}{r_p}{\rm tr}\left( 
\left\{ \bm{T}_{a}(h) + (n_a-h)\bm{\Sigma}_a\right\} \bm{\Sigma}_{b} \bm{\Sigma}_{c} \bm{\Sigma}_{d} \right); \]
\item for $h=n_a+1,\dots,n_a+n_b$, it holds that
\[ {\rm E}_h[M]=\frac{{n_c n_d}}{r_p}{\rm tr}\left( 
\bm{T}_{a}(n_a) \left\{\bm{T}_b(h-n_a)+(n_a+n_b-h)\bm{\Sigma}_{b}\right\} \bm{\Sigma}_{c} \bm{\Sigma}_{d} \right); \]
\item for $h=n_a+n_b+1,\dots,n_a+n_b+n_c$, it holds that
\[
 {\rm E}_h[M] 
= \frac{{n_d}}{r_p}{\rm tr}\left( 
\bm{T}_{a}(n_a) \bm{T}_{b}(n_b) 
\left\{ \bm{T}_c(h-n_a-n_b) 
+(n_a+n_b+n_c-h)\bm{\Sigma}_{c}\right\} 
\bm{\Sigma}_{d} \right); 
\]
\item for $h=n_a+n_b+n_c+1,\dots,n_a+n_b+n_c+n_d$, it holds that
\begin{eqnarray*}
{\rm E}_h[M]
&=& \frac{1}{r_p}{\rm tr}\left( 
\bm{T}_{a}(n_a) \bm{T}_{b}(n_b) \bm{T}_{c}(n_c)  
\right. \\ && \left. \times
\left\{ \bm{T}_d(h-n_a-n_b-n_c)+(n_a+n_b+n_c+n_d-h)\bm{\Sigma}_{d}\right\} \right). 
\end{eqnarray*}
\end{itemize}
Hence, 
$D_h = {\rm E}_h[M] - {\rm E}_{h-1}[M]$ 
is given as follows:
\begin{itemize}
\item for $h=1,\dots,n_a$, it holds that
\[ D_h = 
\frac{{n_b n_c n_d}}{r_p}{\rm tr}\left( 
\left( \bm{x}_h\bm{x}_h' - \bm{\Sigma}_a\right) \bm{\Sigma}_{b} \bm{\Sigma}_{c} \bm{\Sigma}_{d} \right); 
\]
\item for $h=n_a+1,\dots,n_a+n_b$, it holds that
\[ D_h = 
\frac{{n_c n_d}}{r_p}{\rm tr}\left( 
\bm{T}_{a}(n_a) \left(\bm{y}_{h-n_a}\bm{y}_{h-n_a}'-\bm{\Sigma}_{b}\right) \bm{\Sigma}_{c} \bm{\Sigma}_{d} \right);
\]
\item for $h=n_a+n_b+1,\dots,n_a+n_b+n_c$, it holds that
\[ D_h = 
\frac{{n_d}}{r_p}{\rm tr}\left( 
\bm{T}_{a}(n_a) \bm{T}_{b}(n_b) \left(\bm{z}_{h-n_a-n_b}\bm{z}_{h-n_a-n_b}'-\bm{\Sigma}_{c}\right) 
\bm{\Sigma}_{d} \right);
\]
\item for $h=n_a+n_b+n_c+1,\dots,n_a+n_b+n_c+n_d$, it holds that
\[ D_h = 
\frac{1}{r_p}{\rm tr}\left( 
\bm{T}_{a}(n_a) \bm{T}_{b}(n_b) \bm{T}_{c}(n_c) 
\left(\bm{w}_{h-n_a-n_b-n_c}\bm{w}_{h-n_a-n_b-n_c}'-\bm{\Sigma}_{d}\right) \right). \]
\end{itemize}

Recalling that $\sigma_h^2 = {\rm E}_{h-1}[D_h^2]$, by using Lemma~\ref{lem42}, $\sigma_h^2$ is given as follow:
\begin{itemize}
\item for $h=1,\dots,n_a$, it holds that
\begin{eqnarray*}
\sigma_{h}^{2} &=& 
{\rm E}_{h-1}\left[ \left\{ \frac{{n_b n_c n_d}}{r_p}{\rm tr}\left( 
\left( \bm{x}_h\bm{x}_h' - \bm{\Sigma}_a\right) \bm{\Sigma}_{b} \bm{\Sigma}_{c} \bm{\Sigma}_{d} \right) 
\right\}^2 \right] \\
&=& 
\frac{n_b^2 n_c^2 n_d^2}{r_p^2} 
{\rm E}_{h-1}\left[ \left\{ 
\bm{x}_h' \bm{\Sigma}_{a}^{-\frac{1}{2}} \bm{\Sigma}_{a}^{\frac{1}{2}} 
\bm{\Sigma}_{b} \bm{\Sigma}_{c} \bm{\Sigma}_{d} 
\bm{\Sigma}_{a}^{\frac{1}{2}} \bm{\Sigma}_{a}^{-\frac{1}{2}} \bm{x}_h 
- {\rm tr}\left( 
\bm{\Sigma}_{a}^{\frac{1}{2}} \bm{\Sigma}_{b} \bm{\Sigma}_{c} \bm{\Sigma}_{d} \bm{\Sigma}_{a}^{\frac{1}{2}} \right) 
\right\}^2 \right] \\
&=& 
\frac{n_b^2 n_c^2 n_d^2}{r_p^2} 
\left\{ {\rm tr}\left( \left( 
\bm{\Sigma}_{a}^{\frac{1}{2}} \bm{\Sigma}_{b} \bm{\Sigma}_{c} \bm{\Sigma}_{d} \bm{\Sigma}_{a}^{\frac{1}{2}} \right) 
\left( 
\bm{\Sigma}_{a}^{\frac{1}{2}} \bm{\Sigma}_{b} \bm{\Sigma}_{c} \bm{\Sigma}_{d} \bm{\Sigma}_{a}^{\frac{1}{2}} \right) 
\right) \right. \\
&& \left. + {\rm tr}\left( \left( 
\bm{\Sigma}_{a}^{\frac{1}{2}} \bm{\Sigma}_{b} \bm{\Sigma}_{c} \bm{\Sigma}_{d} \bm{\Sigma}_{a}^{\frac{1}{2}} \right)
\left( 
\bm{\Sigma}_{a}^{\frac{1}{2}} \bm{\Sigma}_{d} \bm{\Sigma}_{c} \bm{\Sigma}_{b} \bm{\Sigma}_{a}^{\frac{1}{2}} \right) 
\right) \right\} \\
&=& 
\frac{n_b^2 n_c^2 n_d^2}{r_p^2} 
\left\{ {\rm tr}\left( 
\bm{\Sigma}_{a} \bm{\Sigma}_{b} \bm{\Sigma}_{c} \bm{\Sigma}_{d} \bm{\Sigma}_{a} 
\bm{\Sigma}_{b} \bm{\Sigma}_{c} \bm{\Sigma}_{d} 
\right) 
+ {\rm tr}\left( 
\bm{\Sigma}_{a} \bm{\Sigma}_{b} \bm{\Sigma}_{c} \bm{\Sigma}_{d} 
\bm{\Sigma}_{a} \bm{\Sigma}_{d} \bm{\Sigma}_{c} \bm{\Sigma}_{b} 
\right) \right\};
\end{eqnarray*}
\item for $h=n_a+1,\dots,n_a+n_b$,  it holds that
\begin{eqnarray*}
\sigma_{h}^{2}
&=& 
{\rm E}_{h-1}\left[ \left\{ 
\frac{{n_c n_d}}{r_p} {\rm tr}\left( 
\bm{T}_{a}(n_a) \left(\bm{y}_{h-n_a}\bm{y}_{h-n_a}'-\bm{\Sigma}_{b}\right) \bm{\Sigma}_{c} \bm{\Sigma}_{d} \right)
\right\}^2 \right] \\
&=& 
\frac{n_c^2 n_d^2}{r_p^2} 
{\rm E}_{h-1}\left[ \left\{ 
{\rm tr}\left( 
\bm{y}_{h-n_a}' \bm{\Sigma}_{b}^{-\frac{1}{2}} \bm{\Sigma}_{b}^{\frac{1}{2}} 
\bm{\Sigma}_{c} \bm{\Sigma}_{d} \bm{T}_{a}(n_a) 
\bm{\Sigma}_{b}^{\frac{1}{2}} \bm{\Sigma}_{b}^{-\frac{1}{2}} \bm{y}_{h-n_a} \right) 
\right. \right. \\ && \left. \left.
- {\rm tr}\left( 
\bm{\Sigma}_{b}^{\frac{1}{2}} \bm{\Sigma}_{c} \bm{\Sigma}_{d} \bm{T}_{a}(n_a) \bm{\Sigma}_{b}^{\frac{1}{2}} \right)
\right\}^2 \right] \\
&=& 
\frac{n_c^2 n_d^2}{r_p^2} 
\left\{ {\rm tr}\left( \left( 
\bm{\Sigma}_{b}^{\frac{1}{2}} \bm{\Sigma}_{c} \bm{\Sigma}_{d} \bm{T}_{a}(n_a) \bm{\Sigma}_{b}^{\frac{1}{2}} \right) 
\left( 
\bm{\Sigma}_{b}^{\frac{1}{2}} \bm{\Sigma}_{c} \bm{\Sigma}_{d} \bm{T}_{a}(n_a) \bm{\Sigma}_{b}^{\frac{1}{2}} \right) 
\right) \right. \\
&& \left. + {\rm tr}\left( \left( 
\bm{\Sigma}_{b}^{\frac{1}{2}} \bm{\Sigma}_{c} \bm{\Sigma}_{d} \bm{T}_{a}(n_a) \bm{\Sigma}_{b}^{\frac{1}{2}} \right) 
\left( 
\bm{\Sigma}_{b}^{\frac{1}{2}} \bm{T}_{a}(n_a) \bm{\Sigma}_{d} \bm{\Sigma}_{c} \bm{\Sigma}_{b}^{\frac{1}{2}} \right) 
\right) \right\} \\
&=& 
\frac{n_c^2 n_d^2}{r_p^2} 
\left\{ {\rm tr}\left( 
\bm{T}_{a}(n_a) \bm{\Sigma}_{b} \bm{\Sigma}_{c} \bm{\Sigma}_{d} \bm{T}_{a}(n_a) 
\bm{\Sigma}_{b} \bm{\Sigma}_{c} \bm{\Sigma}_{d} 
\right) \right. \\
&&  \left.
+ {\rm tr}\left( 
\bm{T}_{a}(n_a) \bm{\Sigma}_{b} \bm{T}_{a}(n_a) 
\bm{\Sigma}_{d} \bm{\Sigma}_{c} \bm{\Sigma}_{b} \bm{\Sigma}_{c} \bm{\Sigma}_{d} 
\right) \right\};
\end{eqnarray*}
\item for $h=n_a+n_b+1,\dots,n_a+n_b+n_c$, it holds that
\begin{eqnarray*}
\sigma_{h}^{2} &=& 
{\rm E}_{h-1}\left[ 
\left\{ 
\frac{{n_d}}{r_p}{\rm tr}\left( 
\bm{T}_{a}(n_a) \bm{T}_{b}(n_b) \left(\bm{z}_{h-n_a-n_b}\bm{z}_{h-n_a-n_b}'-\bm{\Sigma}_{c}\right) 
\bm{\Sigma}_{d} \right) 
\right\}^2 \right] \\
&=& \frac{n_d^2 }{r_p^2} 
{\rm E}_{h-1}\left[ \left\{ 
{\rm tr}\left( \bm{z}_{h-n_a-n_b}' \bm{\Sigma}_{c}^{-\frac{1}{2}} \bm{\Sigma}_{c}^{\frac{1}{2}}
\bm{\Sigma}_{d}\bm{T}_{a}(n_a) \bm{T}_{b}(n_b) 
\bm{\Sigma}_{c}^{\frac{1}{2}} \bm{\Sigma}_{c}^{-\frac{1}{2}} \bm{z}_{h-n_a-n_b} \right) 
\right. \right. \\
&& \left. \left. 
- {\rm tr}\left( \bm{\Sigma}_{c}^{\frac{1}{2}}
\bm{\Sigma}_{d}\bm{T}_{a}(n_a) \bm{T}_{b}(n_b) 
\bm{\Sigma}_{c}^{\frac{1}{2}}\right) 
\right\}^2 \right] \\
&=& \frac{n_d^2}{r_p^2} 
\left\{ 
{\rm tr}\left( \bm{\Sigma}_{c}^{\frac{1}{2}}
\bm{\Sigma}_{d} \bm{T}_{a}(n_a) \bm{T}_{b}(n_b) 
\bm{\Sigma}_{c}^{\frac{1}{2}}\bm{\Sigma}_{c}^{\frac{1}{2}}
\bm{\Sigma}_{d} \bm{T}_{a}(n_a) \bm{T}_{b}(n_b) 
\bm{\Sigma}_{c}^{\frac{1}{2}} \right) \right. \\
&& \left. + 
{\rm tr}\left( \bm{\Sigma}_{c}^{\frac{1}{2}}
\bm{\Sigma}_{d} \bm{T}_{a}(n_a) \bm{T}_{b}(n_b) 
\bm{\Sigma}_{c}^{\frac{1}{2}}
\bm{\Sigma}_{c}^{\frac{1}{2}}
\bm{T}_{b}(n_b) \bm{T}_{a}(n_a) \bm{\Sigma}_{d} 
\bm{\Sigma}_{c}^{\frac{1}{2}} \right) 
\right\} \\
&=& \frac{n_d^2}{r_p^2} 
\left\{ 
{\rm tr}\left( 
\bm{T}_{a}(n_a) \bm{T}_{b}(n_b) \bm{\Sigma}_{c} \bm{\Sigma}_{d} 
\bm{T}_{a}(n_a) \bm{T}_{b}(n_b) \bm{\Sigma}_{c} \bm{\Sigma}_{d} 
\right) \right. \\
&& \left. + 
{\rm tr}\left( 
\bm{T}_{a}(n_a) \bm{T}_{b}(n_b) \bm{\Sigma}_{c} \bm{T}_{b}(n_b) 
\bm{T}_{a}(n_a) \bm{\Sigma}_{d} \bm{\Sigma}_{c} \bm{\Sigma}_{d} 
\right) 
\right\};
\end{eqnarray*}
\item for $h=n_a+n_b+n_c+1,\dots,n_a+n_b+n_c+n_d$, it holds that
\begin{eqnarray*}
\sigma_{h}^{2} 
&=& 
{\rm E}_{h-1}\left[ \left\{ 
\frac{1}{r_p}{\rm tr}\left( 
\bm{T}_{a}(n_a) \bm{T}_{b}(n_b) \bm{T}_{c}(n_c) 
\left(\bm{w}_{h-n_a-n_b-n_c}\bm{w}_{h-n_a-n_b-n_c}'-\bm{\Sigma}_{d}\right) \right) 
\right\}^2 \right] \\
&=& \frac{1}{r_p^2} 
{\rm E}_{h-1}\left[ \left\{ 
{\rm tr}\left( \bm{w}_{h-n_a-n_b-n_c}' \bm{\Sigma}_{d}^{-\frac{1}{2}} \bm{\Sigma}_{d}^{\frac{1}{2}}
\bm{T}_{a}(n_a) \bm{T}_{b}(n_b) \bm{T}_{c}(n_c) 
\bm{\Sigma}_{d}^{\frac{1}{2}} \bm{\Sigma}_{d}^{-\frac{1}{2}} \bm{w}_{h-n_a-n_b-n_c} \right) 
\right. \right. \\
&& \left. \left.
 - {\rm tr}\left( \bm{\Sigma}_{d}^{\frac{1}{2}}
\bm{T}_{a}(n_a) \bm{T}_{b}(n_b) \bm{T}_{c}(n_c) 
\bm{\Sigma}_{d}^{\frac{1}{2}}\right) 
\right\}^2 \right] \\
&=& \frac{1}{r_p^2} 
\left\{ 
{\rm tr}\left( \bm{\Sigma}_{d}^{\frac{1}{2}}
\bm{T}_{a}(n_a) \bm{T}_{b}(n_b) \bm{T}_{c}(n_c) 
\bm{\Sigma}_{d}^{\frac{1}{2}}\bm{\Sigma}_{d}^{\frac{1}{2}}
\bm{T}_{a}(n_a) \bm{T}_{b}(n_b) \bm{T}_{c}(n_c) 
\bm{\Sigma}_{d}^{\frac{1}{2}} \right) \right. \\
&& \left. + 
{\rm tr}\left( \bm{\Sigma}_{d}^{\frac{1}{2}}
\bm{T}_{a}(n_a) \bm{T}_{b}(n_b) \bm{T}_{c}(n_c) 
\bm{\Sigma}_{d}^{\frac{1}{2}}
\bm{\Sigma}_{d}^{\frac{1}{2}}
\bm{T}_{c}(n_c) \bm{T}_{b}(n_b) \bm{T}_{a}(n_a) 
\bm{\Sigma}_{d}^{\frac{1}{2}} \right) 
\right\} \\
&=& \frac{1}{r_p^2} 
\left\{ 
{\rm tr}\left( 
\bm{T}_{a}(n_a) \bm{T}_{b}(n_b) \bm{T}_{c}(n_c) \bm{\Sigma}_{d} 
\bm{T}_{a}(n_a) \bm{T}_{b}(n_b) \bm{T}_{c}(n_c) \bm{\Sigma}_{d} 
\right) \right. \\
&& \left. + 
{\rm tr}\left( 
\bm{T}_{a}(n_a) \bm{T}_{b}(n_b) \bm{T}_{c}(n_c) \bm{\Sigma}_{d}
\bm{T}_{c}(n_c) \bm{T}_{b}(n_b) \bm{T}_{a}(n_a) \bm{\Sigma}_{d} 
\right) 
\right\}.
\end{eqnarray*}
\end{itemize}

In general, as
\begin{eqnarray*}
{\rm V}[X+Y] &=& {\rm V}[X] + {\rm V} [Y] + 2 {\rm Cov} [X,Y]
\le {\rm V}[X] + {\rm V} [Y] + 2\left( {\rm V}[X] {\rm V}[Y] \right)^{\frac{1}{2}} \\
&\le & {\rm V}[X] + {\rm V}[Y]+2\max ({\rm V}[X] , {\rm V}[Y]) \le 4\max ({\rm V}[X], {\rm V}[Y])
\end{eqnarray*}
for random variables $X$ and $Y$ which have finite second moments, in order to prove
\[ {\rm V} \left[ \sum_{h=1}^{n_a+n_b+n_c+n_d}\sigma_{h}^{2} \right] \to 0, \]
it suffices to show
\begin{equation}\label{t521}
{\rm V} \left[ \sum_{h=1}^{n_a}\sigma_{h}^{2} \right] \to 0,
\end{equation}
\begin{equation}\label{t522}
{\rm V} \left[ \sum_{h=n_a+1}^{n_a+n_b}\sigma_{h}^{2} \right] \to 0,
\end{equation}
\begin{equation}\label{t523}
{\rm V} \left[ \sum_{h=n_a+n_b+1}^{n_a+n_b+n_c}\sigma_{h}^{2} \right] \to 0, 
\end{equation}
and
\begin{equation}\label{t524}
{\rm V}\left[ \sum_{h=n_a+n_b+n_c+1}^{n_a+n_b+n_c+n_d}\sigma_{h}^{2} \right] \to 0.
\end{equation}

\begin{itemize}
\item \textit{Proof of \eqref{t521}.}
As $r_p = p^2\sqrt{n_a n_b n_c n_d}$, it follows from
\[
\sum_{h=1}^{n_a}\sigma_{h}^{2}
= 
\frac{n_bn_cn_d}{p^4} 
\left\{ {\rm tr}\left( 
\bm{\Sigma}_{a} \bm{\Sigma}_{b} \bm{\Sigma}_{c} \bm{\Sigma}_{d} \bm{\Sigma}_{a} 
\bm{\Sigma}_{b} \bm{\Sigma}_{c} \bm{\Sigma}_{d} 
\right) 
+ {\rm tr}\left( 
\bm{\Sigma}_{a} \bm{\Sigma}_{b} \bm{\Sigma}_{c} \bm{\Sigma}_{d} 
\bm{\Sigma}_{a} \bm{\Sigma}_{d} \bm{\Sigma}_{c} \bm{\Sigma}_{b} 
\right) \right\}
\]
that
\[ {\rm E} \left[ \left( \sum_{h=1}^{n_a}\sigma_{h}^{2} \right)^2 \right] 
= O\left( \frac{n_{b}^{2}n_{c}^{2}n_{d}^{2}}{p^6} \right) \to 0. \]

\item \textit{Proof of \eqref{t522}.}
It follows from
\begin{eqnarray*}
\sum_{h=n_a+1}^{n_a+n_b}\sigma_{h}^{2} 
&=& 
\frac{n_cn_d}{p^4n_a} 
\left\{ {\rm tr}\left( 
\bm{T}_{a}(n_a) \bm{\Sigma}_{b} \bm{\Sigma}_{c} \bm{\Sigma}_{d} \bm{T}_{a}(n_a) 
\bm{\Sigma}_{b} \bm{\Sigma}_{c} \bm{\Sigma}_{d} 
\right) \right. \\
&&\left.
 + {\rm tr}\left( 
\bm{T}_{a}(n_a) \bm{\Sigma}_{b} \bm{T}_{a}(n_a) 
\bm{\Sigma}_{d} \bm{\Sigma}_{c} \bm{\Sigma}_{b} \bm{\Sigma}_{c} \bm{\Sigma}_{d} 
\right) \right\} 
\end{eqnarray*}
that
\[ {\rm E} \left[ \left( \sum_{h=n_a+1}^{n_a+n_b}\sigma_{h}^{2} \right)^2 \right] 
= \frac{n_{c}^{2}n_{d}^{2}}{p^8n_{a}^{2}}
O( n_{a}^{2}p^4 ) = O\left( \frac{n_{c}^{2}n_{d}^{2}}{p^4} \right) \to 0 .\]

\item \textit{Proof of \eqref{t523}.}
By using Proposition~\ref{prop48}, it follows from
\begin{eqnarray*}
\sum_{h=n_a+n_b+1}^{n_a+n_b+n_c}\sigma_{h}^{2} 
&=& 
\frac{n_d}{p^4n_an_b} 
\left\{ 
{\rm tr}\left( 
\bm{T}_{a}(n_a) \bm{T}_{b}(n_b) \bm{\Sigma}_{c} \bm{\Sigma}_{d} 
\bm{T}_{a}(n_a) \bm{T}_{b}(n_b) \bm{\Sigma}_{c} \bm{\Sigma}_{d} 
\right) \right. \\
&& \left. + {\rm tr}\left( 
\bm{T}_{a}(n_a) \bm{T}_{b}(n_b) \bm{\Sigma}_{c} \bm{T}_{b}(n_b) 
\bm{T}_{a}(n_a) \bm{\Sigma}_{d} \bm{\Sigma}_{c} \bm{\Sigma}_{d} 
\right) 
\right\}
\end{eqnarray*}
that
\[ {\rm E} \left[ \left( \sum_{h=n_a+n_b+1}^{n_a+n_b+n_c}\sigma_{h}^{2} \right)^2 \right] 
= \frac{n_{d}^{2}}{p^8n_{a}^{2}n_{b}^{2}}
O( n_{a}^{2}n_{b}^{2}p^6 ) = O\left( \frac{n_{d}^{2}}{p^2} \right) \to 0. \]

\item \textit{Proof of \eqref{t524}.}
It follows from
\begin{eqnarray*}
\lefteqn{
\sum_{h=n_a+n_b+n_c+1}^{n_a+n_b+n_c+n_d}\sigma_{h}^{2} }\\
&=& 
\frac{1}{p^4n_an_bn_c} 
\left\{ 
{\rm tr}\left( 
\bm{T}_{a}(n_a) \bm{T}_{b}(n_b) \bm{T}_{c}(n_c) \bm{\Sigma}_{d} 
\bm{T}_{a}(n_a) \bm{T}_{b}(n_b) \bm{T}_{c}(n_c) \bm{\Sigma}_{d} 
\right) \right. \\
&& \left. + 
{\rm tr}\left( 
\bm{T}_{a}(n_a) \bm{T}_{b}(n_b) \bm{T}_{c}(n_c) \bm{\Sigma}_{d}
\bm{T}_{c}(n_c) \bm{T}_{b}(n_b) \bm{T}_{a}(n_a) \bm{\Sigma}_{d} 
\right) 
\right\}
\end{eqnarray*}
that
\begin{eqnarray*}
\lefteqn{ {\rm E} \left[ \left( \sum_{h=n_a+n_b+n_c+1}^{n_a+n_b+n_c+n_d}\sigma_{h}^{2} \right)^2 \right] } \\
&=& 
\frac{1}{p^8n_{a}^{2}n_{b}^{2}n_{c}^{2}} 
{\rm E} \left[ \left\{ 
{\rm tr}\left( 
\bm{T}_{a}(n_a) \bm{T}_{b}(n_b) \bm{T}_{c}(n_c) \bm{\Sigma}_{d} 
\bm{T}_{a}(n_a) \bm{T}_{b}(n_b) \bm{T}_{c}(n_c) \bm{\Sigma}_{d} 
\right) \right. \right. \\
&& \left. \left. + 
{\rm tr}\left( 
\bm{T}_{a}(n_a) \bm{T}_{b}(n_b) \bm{T}_{c}(n_c) \bm{\Sigma}_{d}
\bm{T}_{c}(n_c) \bm{T}_{b}(n_b) \bm{T}_{a}(n_a) \bm{\Sigma}_{d} 
\right) 
\right\}^2 \right] \\
&=& 
\frac{1}{p^8n_{a}^{2}n_{b}^{2}n_{c}^{2}} \times \\
&& {\rm E} \left[ 
{\rm tr}\left( 
\bm{T}_{a}(n_a) \bm{T}_{b}(n_b) \bm{T}_{c}(n_c) \bm{\Sigma}_{d} 
\bm{T}_{a}(n_a) \bm{T}_{b}(n_b) \bm{T}_{c}(n_c) \bm{\Sigma}_{d} 
\right) 
\right.\\ &&\left. 
\times {\rm tr}\left( 
\bm{T}_{a}(n_a) \bm{T}_{b}(n_b) \bm{T}_{c}(n_c) \bm{\Sigma}_{d} 
\bm{T}_{a}(n_a) \bm{T}_{b}(n_b) \bm{T}_{c}(n_c) \bm{\Sigma}_{d} 
\right) \right. \\
&& +
{\rm tr}\left( 
\bm{T}_{a}(n_a) \bm{T}_{b}(n_b) \bm{T}_{c}(n_c) \bm{\Sigma}_{d} 
\bm{T}_{a}(n_a) \bm{T}_{b}(n_b) \bm{T}_{c}(n_c) \bm{\Sigma}_{d} 
\right)
\\ &&
{\times\rm tr}\left( 
\bm{T}_{a}(n_a) \bm{T}_{b}(n_b) \bm{T}_{c}(n_c) \bm{\Sigma}_{d}
\bm{T}_{c}(n_c) \bm{T}_{b}(n_b) \bm{T}_{a}(n_a) \bm{\Sigma}_{d} 
\right) \\
&& +
{\rm tr}\left( 
\bm{T}_{a}(n_a) \bm{T}_{b}(n_b) \bm{T}_{c}(n_c) \bm{\Sigma}_{d}
\bm{T}_{c}(n_c) \bm{T}_{b}(n_b) \bm{T}_{a}(n_a) \bm{\Sigma}_{d} 
\right)
\\ &&
\times {\rm tr}\left( 
\bm{T}_{a}(n_a) \bm{T}_{b}(n_b) \bm{T}_{c}(n_c) \bm{\Sigma}_{d} 
\bm{T}_{a}(n_a) \bm{T}_{b}(n_b) \bm{T}_{c}(n_c) \bm{\Sigma}_{d} 
\right) \\
&& \left. +
{\rm tr}\left( 
\bm{T}_{a}(n_a) \bm{T}_{b}(n_b) \bm{T}_{c}(n_c) \bm{\Sigma}_{d}
\bm{T}_{c}(n_c) \bm{T}_{b}(n_b) \bm{T}_{a}(n_a) \bm{\Sigma}_{d} 
\right) 
\right. \\ && \left.
\times {\rm tr}\left( 
\bm{T}_{a}(n_a) \bm{T}_{b}(n_b) \bm{T}_{c}(n_c) \bm{\Sigma}_{d}
\bm{T}_{c}(n_c) \bm{T}_{b}(n_b) \bm{T}_{a}(n_a) \bm{\Sigma}_{d} 
\right) 
 \right] \\
& = & 
\frac{1}{p^8n_{b}^{2}n_{c}^{2}} 
{\rm E} \left[ 
{\rm tr}\left( 
\bm{\Sigma}_{a} \bm{T}_{b}(n_b) \bm{T}_{c}(n_c) \bm{\Sigma}_{d} \right) 
{\rm tr}\left( 
\bm{\Sigma}_{a} \bm{T}_{b}(n_b) \bm{T}_{c}(n_c) \bm{\Sigma}_{d} 
\right) 
\right. \\ && \left. \times
{\rm tr}\left( 
\bm{\Sigma}_{a} \bm{T}_{b}(n_b) \bm{T}_{c}(n_c) \bm{\Sigma}_{d} \right) 
{\rm tr}\left( 
\bm{\Sigma}_{a} \bm{T}_{b}(n_b) \bm{T}_{c}(n_c) \bm{\Sigma}_{d} 
\right) \right. \\
&& +
{\rm tr}\left( 
\bm{\Sigma}_{a} \bm{T}_{b}(n_b) \bm{T}_{c}(n_c) \bm{\Sigma}_{d} \right) 
{\rm tr}\left( 
\bm{\Sigma}_{a} \bm{T}_{b}(n_b) \bm{T}_{c}(n_c) \bm{\Sigma}_{d} 
\right)
\\ && \times
{\rm tr}\left( 
\bm{\Sigma}_{a} \bm{T}_{b}(n_b) \bm{T}_{c}(n_c) \bm{\Sigma}_{d}
\bm{T}_{c}(n_c) \bm{T}_{b}(n_b) \right) 
{\rm tr}\left( \bm{\Sigma}_{a} \bm{\Sigma}_{d} 
\right) \\
&& +
{\rm tr}\left( 
\bm{\Sigma}_{a} \bm{T}_{b}(n_b) \bm{T}_{c}(n_c) \bm{\Sigma}_{d}
\bm{T}_{c}(n_c) \bm{T}_{b}(n_b) \right) 
{\rm tr}\left( \bm{\Sigma}_{a} \bm{\Sigma}_{d} 
\right)
\\ && \times
{\rm tr}\left( 
\bm{\Sigma}_{a} \bm{T}_{b}(n_b) \bm{T}_{c}(n_c) \bm{\Sigma}_{d} \right) 
{\rm tr}\left( 
\bm{\Sigma}_{a} \bm{T}_{b}(n_b) \bm{T}_{c}(n_c) \bm{\Sigma}_{d} 
\right) \\
&& \left. +
{\rm tr}\left( 
\bm{\Sigma}_{a} \bm{T}_{b}(n_b) \bm{T}_{c}(n_c) \bm{\Sigma}_{d}
\bm{T}_{c}(n_c) \bm{T}_{b}(n_b) \right) 
{\rm tr}\left( \bm{\Sigma}_{a} \bm{\Sigma}_{d} 
\right) 
\right. \\ && \left. \times
{\rm tr}\left( 
\bm{\Sigma}_{a} \bm{T}_{b}(n_b) \bm{T}_{c}(n_c) \bm{\Sigma}_{d}
\bm{T}_{c}(n_c) \bm{T}_{b}(n_b) \right) 
{\rm tr}\left( \bm{\Sigma}_{a} \bm{\Sigma}_{d} 
\right) 
 \right]  + o(1) \\
& = & 
\frac{1}{p^8n_{c}^{2}} 
{\rm E} \left[ 
{\rm tr}\left( 
\bm{\Sigma}_{b} \bm{T}_{c}(n_c) \bm{\Sigma}_{d}
\bm{T}_{c}(n_c) \right) 
{\rm tr}\left( \bm{\Sigma}_{b} \bm{\Sigma}_{a} \right) 
{\rm tr}\left( \bm{\Sigma}_{a} \bm{\Sigma}_{d} 
\right) 
\right. \\ && \left. \times
{\rm tr}\left( 
\bm{\Sigma}_{b} \bm{T}_{c}(n_c) \bm{\Sigma}_{d}
\bm{T}_{c}(n_c) \right) 
{\rm tr}\left( \bm{\Sigma}_{b} \bm{\Sigma}_{a} \right) 
{\rm tr}\left( \bm{\Sigma}_{a} \bm{\Sigma}_{d} 
\right) 
 \right]  + o(1) \\
& = & 
\frac{1}{p^8} 
{\rm tr}\left( 
\bm{\Sigma}_{c} \bm{\Sigma}_{d} \right) 
{\rm tr}\left( \bm{\Sigma}_{c} \bm{\Sigma}_{b} \right) 
{\rm tr}\left( \bm{\Sigma}_{b} \bm{\Sigma}_{a} \right) 
{\rm tr}\left( \bm{\Sigma}_{a} \bm{\Sigma}_{d} 
\right) 
\\ && \times
{\rm tr}\left( 
\bm{\Sigma}_{c} \bm{\Sigma}_{d} \right) 
{\rm tr}\left( \bm{\Sigma}_{c} \bm{\Sigma}_{b} \right) 
{\rm tr}\left( \bm{\Sigma}_{b} \bm{\Sigma}_{a} \right) 
{\rm tr}\left( \bm{\Sigma}_{a} \bm{\Sigma}_{d} 
\right) 
+ o(1) \\
& \to & 
\sigma_{ab}^{2}\sigma_{ad}^{2}\sigma_{bc}^{2}\sigma_{cd}^{2}.
\end{eqnarray*}
and that
\begin{eqnarray*}
\lefteqn{ {\rm E} \left[ \sum_{h=n_a+n_b+n_c+1}^{n_a+n_b+n_c+n_d}\sigma_{h}^{2} \right] } \\
&=& 
\frac{1}{p^4n_{a}n_{b}n_{c}} 
{\rm E} \left[ 
{\rm tr}\left( 
\bm{T}_{a}(n_a) \bm{T}_{b}(n_b) \bm{T}_{c}(n_c) \bm{\Sigma}_{d} 
\bm{T}_{a}(n_a) \bm{T}_{b}(n_b) \bm{T}_{c}(n_c) \bm{\Sigma}_{d} 
\right) \right. \\
&& \left. + 
{\rm tr}\left( 
\bm{T}_{a}(n_a) \bm{T}_{b}(n_b) \bm{T}_{c}(n_c) \bm{\Sigma}_{d}
\bm{T}_{c}(n_c) \bm{T}_{b}(n_b) \bm{T}_{a}(n_a) \bm{\Sigma}_{d} 
\right) 
\right] \\
& = & 
\frac{1}{p^4n_{b}n_{c}} 
{\rm E} \left[ 
{\rm tr}\left( 
\bm{\Sigma}_{a} \bm{T}_{b}(n_b) \bm{T}_{c}(n_c) \bm{\Sigma}_{d} \right)
{\rm tr}\left( \bm{\Sigma}_{a} \bm{T}_{b}(n_b) \bm{T}_{c}(n_c) \bm{\Sigma}_{d} 
\right) \right. \\
&& \left. + 
{\rm tr}\left( 
\bm{\Sigma}_{a} \bm{T}_{b}(n_b) \bm{T}_{c}(n_c) \bm{\Sigma}_{d}
\bm{T}_{c}(n_c) \bm{T}_{b}(n_b) \right)
{\rm tr}\left( \bm{\Sigma}_{a} \bm{\Sigma}_{d} 
\right) 
\right]  + o(1) \\
& = & 
\frac{1}{p^4n_{c}} 
{\rm E} \left[ 
{\rm tr}\left( 
\bm{\Sigma}_{b} \bm{T}_{c}(n_c) \bm{\Sigma}_{d}
\bm{T}_{c}(n_c) \right)
{\rm tr}\left(\bm{\Sigma}_{b} \bm{\Sigma}_{a} \right)
{\rm tr}\left(\bm{\Sigma}_{a} \bm{\Sigma}_{d} 
\right) 
\right]  + o(1) \\
& = & 
\frac{1}{p^4} 
{\rm E} \left[ 
{\rm tr}\left( 
\bm{\Sigma}_{b} \bm{\Sigma}_{c} \right)
{\rm tr}\left( \bm{\Sigma}_{d}
\bm{\Sigma}_{c} \right)
{\rm tr}\left( \bm{\Sigma}_{b} \bm{\Sigma}_{a} \right)
{\rm tr}\left( \bm{\Sigma}_{a} \bm{\Sigma}_{d} 
\right) 
\right]  + o(1) \\
&\to & \sigma_{ab}\sigma_{ad}\sigma_{bc}\sigma_{cd}.
\end{eqnarray*}
Hence, 
\begin{eqnarray*}
\lefteqn{ {\rm V} \left[ \sum_{h=n_a+n_b+n_c+1}^{n_a+n_b+n_c+n_d}\sigma_{h}^{2} \right] } \\
&=& {\rm E} \left[ \left( \sum_{h=n_a+n_b+n_c+1}^{n_a+n_b+n_c+n_d}\sigma_{h}^{2} \right)^2 \right] 
- \left( {\rm E} \left[ \sum_{h=n_a+n_b+n_c+1}^{n_a+n_b+n_c+n_d}\sigma_{h}^{2} \right] \right)^2 \\
&\to& 0. 
\end{eqnarray*}
\end{itemize}

This completes the proof of Lemma~\ref{lem23}.
\qed

\subsection{Proof of Lemma~\ref{lem24}}

To prove
\[ \sum_{h=1}^{n_a+n_b+n_c+n_d} {\rm E} [D_{h}^{4}] \to 0, \]
it suffices to show
\begin{equation}\label{t531}
\sum_{h=1}^{n_a} {\rm E} [D_{h}^{4}] \to 0,
\end{equation}
\begin{equation}\label{t532}
\sum_{h=n_a+1}^{n_a+n_b} {\rm E} [D_{h}^{4}]\to 0,
\end{equation}
\begin{equation}\label{t533}
\sum_{h=n_a+n_b+1}^{n_a+n_b+n_c} {\rm E} [D_{h}^{4}] \to 0, 
\end{equation}
and
\begin{equation}\label{t534}
\sum_{h=n_a+n_b+n_c+1}^{n_a+n_b+n_c+n_d} {\rm E} [D_{h}^{4}] \to 0.
\end{equation}

\begin{itemize}
\item \textit{Proof of \eqref{t531}.}
For $h=1,\dots,n_a$, it follows from Lemma~\ref{lem43} that
\begin{eqnarray*}
{\rm E} [D_{h}^{4}] &=& 
{\rm E} \left[ \left\{ \frac{{n_b n_c n_d}}{r_p}{\rm tr}\left( 
\left( \bm{x}_h\bm{x}_h' - \bm{\Sigma}_a\right) \bm{\Sigma}_{b} \bm{\Sigma}_{c} \bm{\Sigma}_{d} \right) 
\right\}^4 \right] \\
&=& 
\frac{n_{b}^{4} n_{c}^{4} n_{d}^{4}}{r_p^4} {\rm E} \left[ \left\{  
\bm{x}_h'\bm{\Sigma}_{b} \bm{\Sigma}_{c} \bm{\Sigma}_{d}\bm{x}_h 
- {\rm tr}\left( \bm{\Sigma}_a \bm{\Sigma}_{b} \bm{\Sigma}_{c} \bm{\Sigma}_{d} \right) 
\right\}^4 \right] \\
&=& 
\frac{n_{b}^{4} n_{c}^{4} n_{d}^{4}}{r_p^4} {\rm E} \left[ \left\{  
\bm{x}_h'\bm{\Sigma}_{a}^{-\frac{1}{2}}\bm{\Sigma}_{a}^{\frac{1}{2}}
\bm{\Sigma}_{b} \bm{\Sigma}_{c} \bm{\Sigma}_{d}\bm{\Sigma}_{a}^{\frac{1}{2}}\bm{\Sigma}_{a}^{-\frac{1}{2}}\bm{x}_h 
- {\rm tr}\left( \bm{\Sigma}_{a}^{\frac{1}{2}} \bm{\Sigma}_{b} \bm{\Sigma}_{c} \bm{\Sigma}_{d} 
\bm{\Sigma}_{a}^{\frac{1}{2}}\right) 
\right\}^4 \right] \\
&=& \frac{n_{b}^{4} n_{c}^{4} n_{d}^{4}}{r_p^4} 
\left\{ 3{\rm tr}\left( \left( \bm{A}+\bm{A}' \right)^4 \right) 
+ \frac{3}{4}\left\{ {\rm tr}\left( \left( \bm{A}+\bm{A}' \right)^2 \right) \right\}^2 \right\},
\end{eqnarray*}
where $\bm{A}=\bm{\Sigma}_{a}^{\frac{1}{2}} \bm{\Sigma}_{b} \bm{\Sigma}_{c} \bm{\Sigma}_{d} 
\bm{\Sigma}_{a}^{\frac{1}{2}}$.
In general, for a nonnegative definite matrix $\bm{B} \neq \bm{0}$ whose Spectral decomposition is given by $\bm{B}=\bm{U}\bm{\Lambda}\bm{U}'$, it holds that
\[ \left\{ {\rm tr}(\bm{B}) \right\}^2 = \left( {\rm tr}(\bm{\Lambda}) \right)^2 
= \left( \sum_{i=1}^{p}\lambda_i \right)^2 > \sum_{i=1}^{p}\lambda_{i}^{2} 
= {\rm tr}(\bm{\Lambda}^2) = {\rm tr}(\bm{B}^2). \]
This implies that
\begin{eqnarray*}
\sum_{h=1}^{n_a} {\rm E} [D_{h}^{4}] &=& 
\frac{n_a n_{b}^{4} n_{c}^{4} n_{d}^{4}}{r_p^4} 
\left[ 3{\rm tr}\left( \left( \bm{A}+\bm{A}' \right)^4 \right) 
+ \frac{3}{4}\left\{ {\rm tr}\left( \left( \bm{A}+\bm{A}' \right)^2 \right) \right\}^2 \right] \\
&<& \frac{n_a n_{b}^{4} n_{c}^{4} n_{d}^{4}}{r_p^4} 
\left[ 3\left\{ {\rm tr}\left( \left( \bm{A}+\bm{A}' \right)^2 \right) \right\}^2 
+ \frac{3}{4}\left\{ {\rm tr}\left( \left( \bm{A}+\bm{A}' \right)^2 \right) \right\}^2 \right] \\
&=& \frac{15}{4}\frac{n_a n_{b}^{4} n_{c}^{4} n_{d}^{4}}{r_p^4} 
\left\{ {\rm tr}\left( \left( \bm{A}+\bm{A}' \right)^2 \right) \right\}^2 \\
&=& \frac{15}{4}\frac{n_a n_{b}^{4} n_{c}^{4} n_{d}^{4}}{r_p^4} 
\left\{ {\rm tr}\left( \left( \bm{\Sigma}_{a}^{\frac{1}{2}} \bm{\Sigma}_{b} \bm{\Sigma}_{c} \bm{\Sigma}_{d} 
\bm{\Sigma}_{a}^{\frac{1}{2}}
+\bm{\Sigma}_{a}^{\frac{1}{2}} \bm{\Sigma}_{d} \bm{\Sigma}_{c} \bm{\Sigma}_{b} 
\bm{\Sigma}_{a}^{\frac{1}{2}} \right)^2 \right) \right\}^2 \\
&=& \frac{15}{4}\frac{n_{b}^{2}n_{c}^{2}n_{d}^{2}}{p^6n_{a}} 
\left\{ \frac{{\rm tr}\left( \left( \bm{\Sigma}_{a}^{\frac{1}{2}} \bm{\Sigma}_{b} \bm{\Sigma}_{c} \bm{\Sigma}_{d} 
\bm{\Sigma}_{a}^{\frac{1}{2}}
+\bm{\Sigma}_{a}^{\frac{1}{2}} \bm{\Sigma}_{d} \bm{\Sigma}_{c} \bm{\Sigma}_{b} 
\bm{\Sigma}_{a}^{\frac{1}{2}} \right)^2 \right)}{p} \right\}^2 \\
& \to & 0.
\end{eqnarray*}

\item \textit{Proof of \eqref{t532}.}
For $h=n_a+1,\dots,n_a+n_b$, it follows from Lemma~\ref{lem43} that
\begin{eqnarray*}
{\rm E} [D_{h}^{4}] &=& 
{\rm E} \left[ \left\{ \frac{{n_cn_d}}{r_p}{\rm tr}\left( 
\bm{T}_{a}(n_a) \left(\bm{y}_{h-n_a}\bm{y}_{h-n_a}'-\bm{\Sigma}_{b}\right) \bm{\Sigma}_{c} \bm{\Sigma}_{d} \right) 
\right\}^4 \right] \\
&=& \frac{n_{c}^{4} n_{d}^{4}}{r_p^4} 
{\rm E} \left[ \left\{ {\rm tr}\left( 
\bm{y}_{h-n_a}'\bm{\Sigma}_{b}^{-\frac{1}{2}}\bm{\Sigma}_{b}^{\frac{1}{2}}
\bm{\Sigma}_{c} \bm{\Sigma}_{d} 
\bm{T}_{a}(n_a) \bm{\Sigma}_{b}^{\frac{1}{2}}\bm{\Sigma}_{b}^{-\frac{1}{2}}\bm{y}_{h-n_a} \right) 
\right.\right. \\ && \left.\left.
- {\rm tr}\left( 
\bm{\Sigma}_{b}^{\frac{1}{2}} \bm{\Sigma}_{c} \bm{\Sigma}_{d} \bm{T}_{a}(n_a) \bm{\Sigma}_{b}^{\frac{1}{2}} \right) 
\right\}^4 \right] \\
&=& \frac{n_{c}^{4} n_{d}^{4}}{r_p^4} 
{\rm E} \left[ 3{\rm tr}\left( \left( \bm{B}+\bm{B}' \right)^4 \right) 
+ \frac{3}{4}\left\{ {\rm tr}\left( \left( \bm{B}+\bm{B}' \right)^2 \right) \right\}^2 \right],
\end{eqnarray*}
where $\bm{B}=\bm{\Sigma}_{b}^{\frac{1}{2}} \bm{\Sigma}_{c} \bm{\Sigma}_{d} \bm{T}_{a}(n_a) 
\bm{\Sigma}_{b}^{\frac{1}{2}}$, so that
\begin{eqnarray*}
{\rm E} [D_{h}^{4}] 
&=& \frac{n_{c}^{4} n_{d}^{4}}{r_p^4} 
{\rm E} \left[ 3{\rm tr}\left( \left( \bm{B}+\bm{B}' \right)^4 \right) 
+ \frac{3}{4}\left\{ {\rm tr}\left( \left( \bm{B}+\bm{B}' \right)^2 \right) \right\}^2 \right] \\
&<& \frac{n_{c}^{4} n_{d}^{4}}{r_p^4} 
{\rm E} \left[ 3\left\{ {\rm tr}\left( \left( \bm{B}+\bm{B}' \right)^2 \right) \right\}^2
+ \frac{3}{4}\left\{ {\rm tr}\left( \left( \bm{B}+\bm{B}' \right)^2 \right) \right\}^2 \right] \\
&=& 
\frac{15}{4}\frac{n_{c}^{4} n_{d}^{4}}{r_p^4} 
{\rm E} \left[ \left\{ {\rm tr}\left( \left( \bm{B}+\bm{B}' \right)^2 \right) \right\}^2 \right] \\
&=& 
\frac{15}{4}\frac{n_{c}^{4} n_{d}^{4}}{r_p^4} 
{\rm E} \left[ \left\{ {\rm tr}\left( \left( \bm{\Sigma}_{b}^{\frac{1}{2}} \bm{\Sigma}_{c} \bm{\Sigma}_{d} \bm{T}_{a}(n_a) 
\bm{\Sigma}_{b}^{\frac{1}{2}}+\bm{\Sigma}_{b}^{\frac{1}{2}} \bm{T}_{a}(n_a) \bm{\Sigma}_{d} \bm{\Sigma}_{c} 
\bm{\Sigma}_{b}^{\frac{1}{2}} \right)^2 \right) \right\}^2 \right].
\end{eqnarray*}
Hence,
\begin{eqnarray*}
\lefteqn{ \sum_{h=n_a+1}^{n_a+n_b} {\rm E} [D_{h}^{4}] } \\
&<& 
\frac{15}{4}\frac{n_b n_{c}^{4} n_{d}^{4}}{r_p^4} 
{\rm E} \left[ \left\{ {\rm tr}\left( \left( \bm{\Sigma}_{b}^{\frac{1}{2}} \bm{\Sigma}_{c} \bm{\Sigma}_{d} \bm{T}_{a}(n_a) 
\bm{\Sigma}_{b}^{\frac{1}{2}}+\bm{\Sigma}_{b}^{\frac{1}{2}} \bm{T}_{a}(n_a) \bm{\Sigma}_{d} \bm{\Sigma}_{c} 
\bm{\Sigma}_{b}^{\frac{1}{2}} \right)^2 \right) \right\}^2 \right] \\
&=& 
\frac{15}{4}\frac{n_{c}^{2}n_{d}^{2}}{p^8n_{a}^{2}n_{b}} 
\\ &&
O\left( {\rm E} \left[ {\rm tr}\left( \bm{T}_{a}(n_a) \bm{\Sigma}_{b} \bm{\Sigma}_{c} \bm{\Sigma}_{d} 
\bm{T}_{a}(n_a) \bm{\Sigma}_{b} \bm{\Sigma}_{c} \bm{\Sigma}_{d} \right) 
{\rm tr}\left( \bm{T}_{a}(n_a) \bm{\Sigma}_{b} \bm{\Sigma}_{c} \bm{\Sigma}_{d} 
\bm{T}_{a}(n_a) \bm{\Sigma}_{b} \bm{\Sigma}_{c} \bm{\Sigma}_{d} \right) \right] \right) \\
&=& 
\frac{15}{4}\frac{n_{c}^{2}n_{d}^{2}}{p^8n_{a}^{2}n_{b}} 
O(n_{a}^{2}p^4) \\
&\to & 0.
\end{eqnarray*}

\item \textit{Proof of \eqref{t533}.}
For $h=n_a+n_b+1,\dots,n_a+n_b+n_c$, it follows from Lemma~\ref{lem43} that
\begin{eqnarray*}
{\rm E} [D_{h}^{4}] 
&=& 
{\rm E} \left[ \left\{ \frac{{n_d}}{r_p} {\rm tr}\left( 
\bm{T}_{a}(n_a) \bm{T}_{b}(n_b) \left(\bm{z}_{h-n_a-n_b}\bm{z}_{h-n_a-n_b}'-\bm{\Sigma}_{c}\right) 
\bm{\Sigma}_{d} \right) \right\}^4 \right] \\
&=& \frac{n_{d}^{4}}{r_p^4} 
{\rm E} \left[ \left\{ \bm{z}_{h-n_a-n_b}' \bm{\Sigma}_{c}^{-\frac{1}{2}} \bm{\Sigma}_{c}^{\frac{1}{2}} \bm{\Sigma}_{d} 
\bm{T}_{a}(n_a) \bm{T}_{b}(n_b) \bm{\Sigma}_{c}^{\frac{1}{2}} \bm{\Sigma}_{c}^{-\frac{1}{2}} \bm{z}_{h-n_a-n_b} 
\right.\right. \\ && \left.\left.
-{\rm tr}\left( \bm{\Sigma}_{c}^{\frac{1}{2}} \bm{\Sigma}_{d} 
\bm{T}_{a}(n_a) \bm{T}_{b}(n_b) \bm{\Sigma}_{c}^{\frac{1}{2}} \right) \right\}^4 \right] \\
&=& \frac{n_{d}^{4}}{r_p^4} 
{\rm E} \left[ 3{\rm tr}\left( \left( \bm{C}+\bm{C}' \right)^4 \right) 
+ \frac{3}{4}\left\{ {\rm tr}\left( \left( \bm{C}+\bm{C}' \right)^2 \right) \right\}^2 \right],
\end{eqnarray*}
where $\bm{C}=\bm{\Sigma}_{c}^{\frac{1}{2}} \bm{\Sigma}_{d} 
\bm{T}_{a}(n_a) \bm{T}_{b}(n_b) \bm{\Sigma}_{c}^{\frac{1}{2}}$, so that
\begin{eqnarray*}
{\rm E} [D_{h}^{4}] 
&=& \frac{n_{d}^{4}}{r_p^4} 
{\rm E} \left[ 3{\rm tr}\left( \left( \bm{C}+\bm{C}' \right)^4 \right) 
+ \frac{3}{4}\left\{ {\rm tr}\left( \left( \bm{C}+\bm{C}' \right)^2 \right) \right\}^2 \right] \\
&<& \frac{n_{d}^{4}}{r_p^4} 
{\rm E} \left[ 3\left\{ {\rm tr}\left( \left( \bm{C}+\bm{C}' \right)^2 \right) \right\}^2
+ \frac{3}{4}\left\{ {\rm tr}\left( \left( \bm{C}+\bm{C}' \right)^2 \right) \right\}^2 \right] \\
&=& 
\frac{15}{4} \frac{n_{d}^{4}}{r_p^4} 
{\rm E} \left[ \left\{ {\rm tr}\left( \left( \bm{C}+\bm{C}' \right)^2 \right) \right\}^2 \right] \\
&=& 
\frac{15}{4} \frac{n_{d}^{4}}{r_p^4} 
{\rm E} \left[ \left\{ {\rm tr}\left( \left( \bm{\Sigma}_{c}^{\frac{1}{2}} \bm{\Sigma}_{d} 
\bm{T}_{a}(n_a) \bm{T}_{b}(n_b) \bm{\Sigma}_{c}^{\frac{1}{2}}
+\bm{\Sigma}_{c}^{\frac{1}{2}} \bm{T}_{b}(n_b) 
\bm{T}_{a}(n_a) \bm{\Sigma}_{d} \bm{\Sigma}_{c}^{\frac{1}{2}} \right)^2 \right) \right\}^2 \right].
\end{eqnarray*}
Hence, by using Proposition~\ref{prop48}, 
\begin{eqnarray*}
\lefteqn{ \sum_{h=n_a+n_b+1}^{n_a+n_b+n_c} {\rm E} [D_{h}^{4}] } \\
&<& \frac{15}{4} \frac{n_c n_{d}^{4}}{r_p^4} 
{\rm E} \left[ \left\{ {\rm tr}\left( \left( \bm{\Sigma}_{c}^{\frac{1}{2}} \bm{\Sigma}_{d} 
\bm{T}_{a}(n_a) \bm{T}_{b}(n_b) \bm{\Sigma}_{c}^{\frac{1}{2}}
+\bm{\Sigma}_{c}^{\frac{1}{2}} \bm{T}_{b}(n_b) 
\bm{T}_{a}(n_a) \bm{\Sigma}_{d} \bm{\Sigma}_{c}^{\frac{1}{2}} \right)^2 \right) \right\}^2 \right] \\
&\sim & \frac{15}{4} \frac{n_c n_{d}^{4}}{r_p^4} 
{\rm E} \left[ \left\{ {\rm tr}\left( 
\bm{T}_{a}(n_a) \bm{\Sigma}_{d} \bm{\Sigma}_{c} \bm{\Sigma}_{d} 
\bm{T}_{a}(n_a) \bm{T}_{b}(n_b) \bm{\Sigma}_{c} \bm{T}_{b}(n_b) 
 \right) \right\}^2 \right] \\
&=& \frac{15}{4} \frac{n_c n_{d}^{4}}{r_p^4} 
{\rm E} \left[ {\rm tr}\left( 
\bm{T}_{a}(n_a) \bm{\Sigma}_{d} \bm{\Sigma}_{c} \bm{\Sigma}_{d} 
\bm{T}_{a}(n_a) \bm{T}_{b}(n_b) \bm{\Sigma}_{c} \bm{T}_{b}(n_b) 
 \right) 
\right. \\ && \left. \times
 {\rm tr}\left( 
\bm{T}_{a}(n_a) \bm{\Sigma}_{d} \bm{\Sigma}_{c} \bm{\Sigma}_{d} 
\bm{T}_{a}(n_a) \bm{T}_{b}(n_b) \bm{\Sigma}_{c} \bm{T}_{b}(n_b) 
 \right) \right] \\
&=& 
\frac{15}{4}\frac{n_{d}^{2}}{p^8n_{a}^{2}n_{b}^{2}n_{c}} O(n_{a}^{2}n_{b}^{2}p^6) \\
&\to & 0.
\end{eqnarray*}

\item \textit{Proof of \eqref{t534}.}
For $h=n_a+n_b+n_c+1,\dots,n_a+n_b+n_c+n_d$, it follows from Lemma~\ref{lem43} that
\begin{eqnarray*}
{\rm E} [D_{h}^{4}] 
&=& 
{\rm E} \left[ \left\{ \frac{1}{r_p} {\rm tr}\left( 
\bm{T}_{a}(n_a) \bm{T}_{b}(n_b) \bm{T}_{c}(n_c) 
\left(\bm{w}_{h-n_a-n_b-n_c}\bm{w}_{h-n_a-n_b-n_c}'-\bm{\Sigma}_{d}\right) \right) \right\}^4 \right] \\
&=& \frac{1}{r_p^4} 
{\rm E} \left[ \left\{ 
\bm{w}_{h-n_a-n_b-n_c}'\bm{\Sigma}_{d}^{-\frac{1}{2}}\bm{\Sigma}_{d}^{\frac{1}{2}}\bm{T}_{a}(n_a) \bm{T}_{b}(n_b) 
\bm{T}_{c}(n_c) \bm{\Sigma}_{d}^{\frac{1}{2}}\bm{\Sigma}_{d}^{-\frac{1}{2}}\bm{w}_{h-n_a-n_b-n_c} 
\right. \right. \\
&& \left. \left. 
- {\rm tr}\left( 
\bm{\Sigma}_{d}^{\frac{1}{2}} \bm{T}_{a}(n_a) \bm{T}_{b}(n_b) \bm{T}_{c}(n_c) \bm{\Sigma}_{d}^{\frac{1}{2}} \right) 
\right\}^4 \right] \\
&=& \frac{1}{r_p^4} 
{\rm E} \left[ 3{\rm tr}\left( \left( \bm{D}+\bm{D}' \right)^4 \right) 
+ \frac{3}{4}\left\{ {\rm tr}\left( \left( \bm{D}+\bm{D}' \right)^2 \right) \right\}^2 \right],
\end{eqnarray*}
where $\bm{D}=\bm{\Sigma}_{d}^{\frac{1}{2}} \bm{T}_{a}(n_a) \bm{T}_{b}(n_b) \bm{T}_{c}(n_c) \bm{\Sigma}_{d}^{\frac{1}{2}}$, so that
\begin{eqnarray*}
{\rm E} [D_{h}^{4}] 
&=& \frac{1}{r_p^4} 
{\rm E} \left[ 3{\rm tr}\left( \left( \bm{D}+\bm{D}' \right)^4 \right) 
+ \frac{3}{4}\left\{ {\rm tr}\left( \left( \bm{D}+\bm{D}' \right)^2 \right) \right\}^2 \right] \\
&<& 
\frac{1}{r_p^4} 
{\rm E} \left[ 3 \left( {\rm tr}\left( \left( \bm{D}+\bm{D}' \right)^2 \right) \right)^2 
+ \frac{3}{4}\left\{ {\rm tr}\left( \left( \bm{D}+\bm{D}' \right)^2 \right) \right\}^2 \right] \\
&=& 
\frac{15}{4}\frac{1}{r_p^4} 
{\rm E} \left[ \left\{ {\rm tr}\left( \left( \bm{D}+\bm{D}' \right)^2 \right) \right\}^2 \right] \\
&=& 
\frac{15}{4}\frac{1}{r_p^4} 
{\rm E} \left[ \left\{ {\rm tr}\left( \left( \bm{\Sigma}_{d}^{\frac{1}{2}} \bm{T}_{a}(n_a) \bm{T}_{b}(n_b) \bm{T}_{c}(n_c) 
\bm{\Sigma}_{d}^{\frac{1}{2}}
+\bm{\Sigma}_{d}^{\frac{1}{2}} \bm{T}_{c}(n_c) \bm{T}_{b}(n_b) \bm{T}_{a}(n_a) 
\bm{\Sigma}_{d}^{\frac{1}{2}} \right) \right) \right\}^2 \right].
\end{eqnarray*}
Hence, 
\begin{eqnarray*}
\lefteqn{
\sum_{h=n_a+n_b+n_c+1}^{n_a+n_b+n_c+n_d} {\rm E} [D_{h}^{4}] } \\
&<& 
\frac{15}{4} \frac{n_d}{r_p^4} 
{\rm E} \left[ \left\{ {\rm tr}\left( \left( \bm{\Sigma}_{d}^{\frac{1}{2}} \bm{T}_{a}(n_a) \bm{T}_{b}(n_b) \bm{T}_{c}(n_c) 
\bm{\Sigma}_{d}^{\frac{1}{2}}
+\bm{\Sigma}_{d}^{\frac{1}{2}} \bm{T}_{c}(n_c) \bm{T}_{b}(n_b) \bm{T}_{a}(n_a) 
\bm{\Sigma}_{d}^{\frac{1}{2}} \right)^2 \right) \right\}^2 \right] \\
&\sim & 
\frac{15}{4} \frac{n_d}{r_p^4} 
{\rm E} \left[ \left\{ {\rm tr}\left( \bm{T}_{a}(n_a) 
\bm{\Sigma}_{d} \bm{T}_{a}(n_a) \bm{T}_{b}(n_b) \bm{T}_{c}(n_c) 
\bm{\Sigma}_{d} \bm{T}_{c}(n_c) \bm{T}_{b}(n_b) \right) \right\}^2 \right] \\
&=& 
\frac{15}{4}\frac{n_d}{r_p^4} 
{\rm E} \left[ {\rm tr}\left( \bm{T}_{a}(n_a) 
\bm{\Sigma}_{d} \bm{T}_{a}(n_a) \bm{T}_{b}(n_b) \bm{T}_{c}(n_c) 
\bm{\Sigma}_{d} \bm{T}_{c}(n_c) \bm{T}_{b}(n_b) \right) \right. \\
&& \left. \times {\rm tr}\left( \bm{T}_{a}(n_a) 
\bm{\Sigma}_{d} \bm{T}_{a}(n_a) \bm{T}_{b}(n_b) \bm{T}_{c}(n_c) 
\bm{\Sigma}_{d} \bm{T}_{c}(n_c) \bm{T}_{b}(n_b) \right) \right] \\
&=& 
\frac{15}{4}\frac{1}{p^8n_{a}^{2}n_{b}^{2}n_{c}^{2}n_{d}} 
\left\{ O(n_{a}^{4}n_{b}^{4}n_{c}^{4}p^2) + O(n_{a}^{2}n_{b}^{4}n_{c}^{4}p^4) + 
O(n_{a}^{4}n_{b}^{2}n_{c}^{4}p^4) + O(n_{a}^{4}n_{b}^{4}n_{c}^{2}p^4) 
\right. \\ && \left. 
+ O(n_{a}^{2}n_{b}^{2}n_{c}^{4}p^6) + 
O(n_{a}^{2}n_{b}^{4}n_{c}^{2}p^6) + O(n_{a}^{4}n_{b}^{2}n_{c}^{2}p^6) + 
O(n_{a}^{2}n_{b}^{2}n_{c}^{2}p^8) \right\} \\
&=& 
\frac{15}{4}\frac{1}{p^8n_{a}^{2}n_{b}^{2}n_{c}^{2}n_{d}} O(n_{a}^{2}n_{b}^{2}n_{c}^{2}p^8)
\\
& \to & 0.
\end{eqnarray*}
\end{itemize}

This completes the proof of Lemma~\ref{lem24}.
\qed

\section*{Acknowledgments}
The first author was supported in part by Japan Society for the Promotion of Science KAKENHI Grant Number 18K13454.
This study was partly carried out when the first author (KT) was a member of Graduate School of Arts and Sciences, the University of Tokyo.

\end{document}